\def\D{D}
\def\H{H}
\begin{document}

\theoremstyle{plain}
	\newtheorem{Pp}{Proposition}[section]
	\newtheorem{Thm}[Pp]{Theorem}
	\newtheorem{Lm}[Pp]{Lemma}
	\newtheorem{Cor}[Pp]{Corollary}
	\newtheorem*{H1}{Assumption (H1)}
	\newtheorem*{H2}{Assumption (H2)}
	\newtheorem*{H3}{Assumption (H3)}
	\newtheorem*{H4}{Assumption (H4)}
\theoremstyle{definition}
  \newtheorem*{Data}{Data (D)}
	\newtheorem{Df}[Pp]{Definition}
	\newtheorem{ex}[Pp]{Example}
	\newtheorem{Cond}[Pp]{Condition}
	\newtheorem{Ass}[Pp]{Assumption}
	\newtheorem{Rm}[Pp]{Remark}
	\newtheorem{Int}[Pp]{Motivation and Interpretation}

\title[Hypocoercivity for Kolmogorov backward equations]{Hypocoercivity for Kolmogorov backward evolution equations and applications}

\author{Martin Grothaus}
\address{Martin Grothaus, Mathematics Department, University of Kaiserslautern, \newline 
P.O.Box 3049, 67653 Kaiserslautern, Germany. {\rm \texttt{Email:~grothaus@mathematik.uni-kl.de}},\newline
Functional Analysis and Stochastic Analysis Group, \newline
{\rm \texttt{URL:~http://www.mathematik.uni-kl.de/fuana/ }}} 

\author{Patrik Stilgenbauer}
\address{
Patrik Stilgenbauer, Mathematics Department, University of Kaiserslautern, \newline
P.O.Box 3049, 67653 Kaiserslautern, Germany. 
{\rm \texttt{Email:~stilgenb@mathematik.uni-kl.de}}, \newline
Functional Analysis and Stochastic Analysis Group, \newline
{\rm \texttt{URL:~http://www.mathematik.uni-kl.de/fuana/ }}}

\date{\today}

\subjclass[2000]{Primary 37A25; Secondary 58J65}

\keywords{Hypocoercivity; Exponential decay to equilibrium; Exponential rate of convergence; Ergodicity; Kolmogorov backward equation; Hypoellipticity; Poincar\'e inequality; Degenerate diffusion; Spherical velocity Langevin equation;  Fiber lay-down;  Stratonovich SDEs on manifolds; Fokker-Planck equation}

\begin{abstract}
In this article we extend the modern, powerful and simple abstract Hilbert space strategy for proving hypocoercivity that has been developed originally by Dolbeault, Mouhot and Schmeiser in \cite{DMS10}. As well-known, hypocoercivity methods imply an exponential decay to equilibrium with explicit computable rate of convergence. Our extension is now made for studying the long-time behavior of some strongly continuous semigroup generated by a (degenerate) Kolmogorov backward operator $L$. Additionally, we introduce several domain issues into the framework. Necessary conditions for proving hypocoercivity need then only to be verified on some fixed operator core of $L$. Furthermore, the setting is also suitable for covering existence and construction problems as required in many applications. The methods are applicable to various, different, Kolmogorov backward evolution problems. As a main part, we apply the extended framework to the (degenerate) spherical velocity Langevin equation. The latter can be seen as some kind of an analogue to the classical Langevin equation in case spherical velocities are required. This model is of important industrial relevance and describes the fiber lay-down in the production process of nonwovens. For the construction of the strongly continuous contraction semigroup we make use of modern hypoellipticity tools and pertubation theory.
\end{abstract}

\maketitle

\section{Introduction} \label{section_Introduction}

In an interesting recently appeared research article Dolbeault, Mouhot and Schmeiser developed a simple strategy for proving exponential decay to zero of specific strongly continuous semigroups with associated (e.g.~Fokker-Planck) generator denoted by $L$ in a Hilbert space framework. Here $L$ must be of the form $L=S-A$ with $S$ symmetric and $A$ antisymmetric such that $S$ and $A$ are interacting in a crucial way and satisfy certain coercivity assumptions and boundedness relations, see \cite[Sec.~1.3]{DMS10} and its previous article \cite{DMS09}. It was the great idea and one of the main achievements of Dolbeault, Mouhot and Schmeiser to find a suitable entropy functional in terms of the considered operators, which is equivalent to the underlying Hilbert space norm, for measuring the exponential convergence to equilibrium, see \cite[Sec.~1.3]{DMS10}. Neglecting any domain issues and questions concerning the construction of the semigroup first, the four necessary conditions introduced by Dolbeault, Mouhot and Schmeiser can then in principal easily be checked in applications and imply the desired ergodicity behavior of the semigroup. As consequence, a wide range of applications can be studied and discussed with the help of these new tools. The authors strategy may be called hypocoercivity, see Villani's memoirs (\cite{Vil09} and \cite{Vil06}) for explanation and some fundamental studies concerning this new mathematical research area. Besides \cite{Vil09} and \cite{DMS10}, there is a huge list of references dealing with results related to hypocoercivity, hypoellipticity and the (exponential) long-time behavior via using analytic approaches. The interested reader may consult also \cite{AMTU01}, \cite{Cal12}, \cite{Dua11}, \cite{DEVI01}, \cite{DEVI05}, \cite{HeNi06}, \cite{HeNi04},  \cite{HN05}, \cite{LLS07}, \cite{MN06}, \cite{SG06} and \cite{OP11} where we have mentioned only a few of these references. We remark, that in all these articles mainly Fokker-Planck type equations are regarded.

In the underlying article, we consider and extend the hypocoercivity strategy from \cite{DMS10} described previously. Our extension now concerns Kolmogorov backward type evolution equations which are again formulated as an abstract Cauchy problem in a Hilbert space framework. Additionally, we include domain issues and extend the setting from \cite{DMS10} to the case in which we know an operator core for the (Kolmogorov backward) generator $L$ of some strongly continuous semigroup of interest. Conditions (H1)-(H4) from \cite[Sec.~1.3]{DMS10} are adapted in a suitable way and are now mainly formulated on the fixed operator core of $L$. We supplement our framework by developing sufficient criteria implying the necessary assumptions. Whereas the general setting from \cite{DMS10} applies to Hilbert spaces of the form
\begin{align*}
\H=\Big\{ f \in L^2(E,\mu) ~\Big|~ \int_E f \, \mathrm{d}\mu=0 \Big\}
\end{align*}
with $\mu$ the desired equilibrium (probability) distribution on some suitable state space $E$, our framework is formulated on $L^2(E,\mu)$. Together with some data conditions (D), this setup can simplify domain issues and is suitable for showing essential m-dissipativity of $L$ and essential selfadjointness of a related operator occuring in the framework. We emphasize that several Kolmogorov backward hypocoercivity problems can be studied within this setup, in a similar way as we do in our application.

Moreover, our extended framework can be formulated in similar form in some Fokker-Planck setting (again with altered underlying Hilbert space structure compared to \cite{DMS10} and in terms of operator cores). However, due to our application, we are interested in studying the long-time behavior of some specific Kolmogorov backward evolution equations. So we pay attention only on the backward formulation in this article.

The discussion and derivation of the Kolmogorov backward setting is done in Section \ref{Section_Extension_Hypocoercivity}, see especially \textbf{Data (D)}, \textbf{ Assumptions (H1)-(H4)} and \textbf{Theorem \ref{Thm_Hypocoercivity}} as well as Corollary \ref{Cor_H3_equivalent}, Lemma \ref{Pp_suff_H4_part1} and Proposition \ref{Pp_suff_H4_part2} therein and compare the conditions with the original ones from \cite[Sec.~1.3]{DMS10}. 

Afterwards, see Section \ref{Hypocoercivity_section_Spherical_velocity_Langevin}, we apply the previously developed framework to study the ergodic behavior of the spherical velocity Langevin type process which is prescribed by the manifold-valued Stratonovich stochastic differential equation with state space $\mathbb{M}=\mathbb{R}^d \times \mathbb{S}$ of the form
\begin{align} \label{Fiber_Model_Intro}
&\mathrm{d}x_t = \omega_t \, \mathrm{dt} \\
&\mathrm{d}\omega_t = - \frac{1}{d-1}(I- \omega_t \otimes \omega_t) \,\nabla V (x_t) \, \mathrm{dt} + \sigma\,(I-\omega_t \otimes \omega_t) \circ \mathrm{d}W_t. \nonumber
\end{align}
whose associated Kolmogorov backward operator $L$ reads
\begin{align} \label{Fiber_Operator_Intro}
L= \omega \cdot \nabla_x - \text{grad}_{\mathbb{S}} \Phi \cdot \nabla_\omega + \frac{1}{2} \sigma^2 \, \Delta_{\mathbb{S}}\,~\mbox{ with }\,~\Phi(x,\omega)= \frac{1}{d-1} \,\nabla_x V(x) \cdot \omega.
\end{align}
For details on \eqref{Fiber_Model_Intro}, see \cite{GKMS12} or \cite{GS12} (and the related articles \cite{KMW12}, \cite{GKMW07}). Here  $d \in \mathbb{N}$ with $d \geq 2$. $W$ is a standard $d$-dimensional Brownian motion, $z \otimes y = z y^T$ for $z,y \in \mathbb{R}^{d}$ and $y^T$ is the transpose of $y$. $\mathbb{S}=\mathbb{S}^{d-1}$ denotes the unit sphere with respect to the euclidean norm $|\cdot|$ in $\mathbb{R}^d$, $\text{grad}_{\mathbb{S}} \,\psi \cdot \nabla_\omega$ or $\text{grad}_{\mathbb{S}} \,\psi$ the spherical gradient of some $\psi \in C^\infty(\mathbb{S})$ and $\Delta_{\mathbb{S}}$ the Laplace Beltrami on $\mathbb{S}$. $x$ always denotes the space variable in $\mathbb{R}^d$ and $\omega$ the velocity component in $\mathbb{S} \subset \mathbb{R}^d$ and all vectors in euclidean space are understood as column vectors. The potential $V:\mathbb{R}^d \rightarrow \mathbb{R}$ is specified later on and $\sigma$ is a finite constant with $\sigma \geq 0$. $\left(\cdot,\cdot\right)_{\text{euc}}$ or $\cdot$ denotes the euclidean scalar product and $\nabla$, or $\nabla_x$ respectively, the usual gradient in $\mathbb{R}^d$. We also refer to Section \ref{Section_Formulas_Notations}, where the geometric language is made precise.

The geometry of the spherical velocity Langevin process \eqref{Fiber_Model_Intro} is explained in \cite{GS12}. Indeed, the equation can informally be derived from the classical Langevin equation (more precisely, this is (1.1) in \cite{GS12}) via projecting an infinitesimal step of the velocity component in the classical Langevin model onto the sphere. The resulting model equation reduces then to \eqref{Fiber_Model_Intro}. Hence \eqref{Fiber_Model_Intro} describes the evolution of a particle moving under the influence of an external forcing field with velocity of euclidean norm equal to $1$ and whose velocity components are stochastically pertubated by a (spherical) Brownian motion. Of particular industrial interest is its two-dimensional ($d=2$) and three-dimensional ($d=3$) version which exactly arises in industrial applications as the so-called fiber lay-down process in modeling virtual nonwoven webs, see \cite{KMW12} and \cite{GKMS12} and other references therein. Here the restriction that the velocity component lives on the sphere can also be interpreted that the fiber lay-down curve is just arc-length parametrized. Especially, the (exponential) decay to equilibrium of this process is strongly connected to the fleece structure of the resulting nonwoven object. So from an industrial point of view, studying and analyzing the ergodic behavior is therefore an indispensable task which is of course also interesting when considered the latter as a purely mathematical problem. For further motivation on this, see again \cite{GKMS12}. Moreover, models similar to \eqref{Fiber_Model_Intro}, i.e., spherical velocity or arc-length parametrized stochastic models are appearing also in some other mathematical research areas besides the fiber lay-down application. Here we mention \cite{CKMT10} in which an extended two-dimensional version of \eqref{Fiber_Model_Intro} is used for modeling the swarming behavior. Furthermore, see \cite{Mum90}, where the two-dimensional operator \eqref{Fiber_Operator_Intro} with velocity component parametrized in terms of an angle and with potential equal to zero is introduced in computer vision. Finally, as pointed out to us by Isma\"{e}l Bailleul, a similar spherical velocity model close to \eqref{Fiber_Model_Intro} arises in a relativistic framework in modeling random dynamics on Lorentzian manifolds, see \cite{FJ07} and \cite{Bai10}. 

The long-time behavior of the two-dimensional version of \eqref{Fiber_Model_Intro} is already investigated in several research articles. Mathematical demanding problems arising since the equation and its associated Kolmogorov generator are degenerate. We remark that the two-dimensional model can equivalently be formulated on $\mathbb{R}^2 \times \mathbb{R} / {2 \pi \mathbb{Z}}$, thus, the usage of a differential geometric language can in principal be avoided in this case. Let us describe the results obtained so far for the latter model. By using the theory of (generalized) Dirichlet forms and operator semigroups, an ergodic theorem with explicit computable rate of convergence has been presented in \cite{GK08}. Furthermore, Dolbeault, Klar, Mouhot and Schmeiser applied the hypocoercivity methods from \cite{DMS10} in order to get an exponential rate of convergence for the two-dimensional fiber lay-down equation. This hypocoercivity approach is used and generalized in \cite{GKMS12} to the $d$-dimensional model given by \eqref{Fiber_Model_Intro}. In the latter article, a further stochastic strong mixing result under weak potential conditions is derived for the general model \eqref{Fiber_Model_Intro}. However, both of these hypocoercivity approaches for the fiber lay-down equation are done in an algebraic way and do not discuss any domain issues, in particular, a rigorous elaboration of them has been left open, see Remark 6.1 and Remark 6.5 in \cite{GKMS12}. Moreover, let us also mention the articles \cite{KSW11} and \cite{KSW12} which are based on purely stochastic tools such as using Lyapunov type arguments. Therein Kolb, Savov and W\"{u}bker derive strong mixing properties under weak assumptions on the potential and show geometric ergodicity of the two-dimensional fiber lay-down equation (with an included moving belt) under some stronger conditions on the potential.

A mathematical complete elaboration of the desired hypocoercivity theorem for the spherical velocity Langevin process can now precisely be discussed with the help of the abstract Kolmogorov backward setting developed in Section \ref{Section_Extension_Hypocoercivity}. Let us already state the final theorem. Therein $C_c^\infty(\mathbb{X})$ denotes the set of all infinitely often compactly supported differentiable functions on some smooth manifold $\mathbb{X}$ and $C^2(\mathbb{X})$ means twice continuously differentiable on $\mathbb{X}$. Moreover, $\nabla_x^2$ (or $\nabla^2$) denotes the Hessian matrix in $\mathbb{R}^d$. We remark that potentials of the form $V(x)=\sum_{i=1}^d a_i \,x_i^2$, $a_i \geq 0$, which are relevant for the fiber lay-down application, satisfy (after normalization) the necessary conditions below.

\begin{Thm} \label{Hypocoercivity_theorem_spherical_velocity_Langevin}
Let $d \in \mathbb{N}$, $d \geq 2$, and let $\sigma > 0$. We assume that the potential $V:\mathbb{R}^d \rightarrow \mathbb{R}$ is bounded from below, satisfies $V \in C^{2}(\mathbb{R}^d)$ and that $e^{-V} \mathrm{d}x$ is a probability measure on $(\mathbb{R}^d,\mathcal{B}(\mathbb{R}^d))$. Moreover, the measure $e^{-V}\mathrm{d}x$ is assumed to satisfy a Poincar\'e inequality of the form
\begin{align*}
\big\|\nabla f \big\|^2_{L^2(e^{-V}\mathrm{d}x)} \geq \Lambda  \, \left\|\, f - \int_{\mathbb{R}^d} f \, e^{-V}\mathrm{d}x \,\right\|^2_{L^2(e^{-V}\mathrm{d}x)}
\end{align*}
for some $\Lambda \in (0,\infty)$ and all $f \in C_c^\infty(\mathbb{R}^d)$. Furthermore, assume that there exists some constant $c < \infty$ such that
\begin{align*}
\left| \nabla^2 V (x) \right| \leq c \left( 1+ \left| \nabla V(x) \right|\right) \mbox{ for all } x \in \mathbb{R}^d.
\end{align*}
Define $\mu = e^{-V} \mathrm{d}x \otimes \nu$ where $\nu$ is the normalized spherical surface measure. Then the spherical velocity Langevin operator $(L,C_c^\infty(\mathbb{M}))$ is closable on $L^2(\mathbb{M},\mu)$ and its closure $(L,D(L))$ generates a strongly continuous contraction semigroup denoted by $(T(t))_{t \geq 0}$.  Finally, there exists strictly positive constants $\kappa_1,\kappa_2 < \infty$  which are computable in terms of $\Lambda$, $c$, $d$ and $\sigma$ such that for each $g \in {L^2(\mathbb{M},\mu)}$ we have
\begin{align*}
\left\|\,T(t)g - \int_\mathbb{M} g \, \mathrm{d}\mu \,\right\|_{L^2(\mathbb{M},\mu)} \leq \kappa_1 e^{-\kappa_2 \,t}  \left\|\,g - \int_\mathbb{M} g \, \mathrm{d}\mu \,\right\|_{L^2(\mathbb{M},\mu)} ~\mbox{ for all } t \geq 0.
\end{align*}
\end{Thm}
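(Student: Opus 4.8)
The strategy is to realize the operator \eqref{Fiber_Operator_Intro} inside the abstract Kolmogorov backward setting of Section \ref{Section_Extension_Hypocoercivity} and to conclude by Theorem \ref{Thm_Hypocoercivity}. I would work on the Hilbert space $L^2(\mathbb{M},\mu)$ with operator core $\mathcal{C}=C_c^\infty(\mathbb{M})$ and use the splitting $L=S-A$ with $S:=\tfrac12\sigma^2\Delta_{\mathbb{S}}$ the fibrewise Laplace--Beltrami operator (symmetric, $\le 0$) and $A:=\text{grad}_{\mathbb{S}}\Phi\cdot\nabla_\omega-\omega\cdot\nabla_x$ the transport part. Verifying \textbf{Data (D)} then amounts to: $S,A$ map $\mathcal{C}$ into $L^2(\mathbb{M},\mu)$ and $\mathcal{C}$ is dense; on $\mathcal{C}$, $S$ is symmetric and $A$ is antisymmetric; and $\mu=e^{-V}\mathrm{d}x\otimes\nu$ is invariant, i.e.\ $\int_\mathbb{M} Lf\,\mathrm{d}\mu=0$ for $f\in\mathcal C$. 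These reduce to integration by parts on $\mathbb{R}^d$ against $e^{-V}\mathrm{d}x$ together with the divergence theorem on the closed manifold $\mathbb{S}$; the only nontrivial cancellation is that the drift correction produced by the $x$-gradient of $e^{-V}$ matches $\Delta_{\mathbb{S}}\Phi$, which holds because $\omega\mapsto\nabla V(x)\cdot\omega$ restricted to $\mathbb{S}^{d-1}$ is a degree-one spherical harmonic, so $\Delta_{\mathbb{S}}\Phi=-\,\omega\cdot\nabla_x V$. Since $(Lf,g)_{L^2(\mu)}=(f,L^{+}g)_{L^2(\mu)}$ on $\mathcal C$ with $L^{+}:=-\omega\cdot\nabla_x+\text{grad}_{\mathbb{S}}\Phi\cdot\nabla_\omega+\tfrac12\sigma^2\Delta_{\mathbb{S}}$ again defined on $\mathcal C$, the adjoint of $(L,\mathcal C)$ is densely defined, so $(L,\mathcal C)$ is closable.

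The next step, which together with the auxiliary-operator estimates I expect to be the main obstacle, is the construction of the semigroup: essential m-dissipativity of $(L,\mathcal C)$ and essential self-adjointness of $(S,\mathcal C)$. Dissipativity is immediate from $\mathrm{Re}(Lf,f)_{L^2(\mu)}=(Sf,f)_{L^2(\mu)}=-\tfrac12\sigma^2\|\text{grad}_{\mathbb{S}}f\|_{L^2(\mu)}^2\le0$; essential self-adjointness of $S$ follows from the classical fact that the Laplace--Beltrami operator on the compact manifold $\mathbb{S}$ is essentially self-adjoint on $C^\infty(\mathbb{S})$, combined with a tensorization argument in the $x$-variable since no operator acts in that direction. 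For m-dissipativity one has to show that $\mathrm{Ran}(\lambda-L)$ is dense for some $\lambda>0$; because $L$ is degenerate (second order only in the spherical velocity directions) I would invoke hypoellipticity — H\"ormander's bracket condition holds for $\sigma>0$, as the commutators of the spherical gradient vector fields with $\omega\cdot\nabla_x$ recover the missing $x$-directions — combined with a perturbation argument treating $\omega\cdot\nabla_x$ as a relatively bounded perturbation of $-\text{grad}_{\mathbb{S}}\Phi\cdot\nabla_\omega+\tfrac12\sigma^2\Delta_{\mathbb{S}}$; the growth bound $|\nabla^2 V|\le c(1+|\nabla V|)$ enters precisely to control the unbounded coefficient $\text{grad}_{\mathbb{S}}\Phi$ and its derivatives here. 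This is the point at which the degenerate geometry on $\mathbb{M}=\mathbb{R}^d\times\mathbb{S}$ is really used.

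Then I would verify \textbf{(H1)}--\textbf{(H4)} together with the remaining data conditions. The orthogonal projection $P$ onto $\ker S$ is the fibrewise average $Pf(x,\omega)=\int_{\mathbb{S}}f(x,\omega')\,\mathrm{d}\nu(\omega')$, so $\ker S\cong L^2(\mathbb{R}^d,e^{-V}\mathrm{d}x)$. The algebraic relations hold: $PS=SP=0$ by self-adjointness of $S$, and $PAPu=0$ since for $u=u(x)$ one has $Au=-\omega\cdot\nabla_x u$ and $\int_{\mathbb{S}}\omega\,\mathrm{d}\nu=0$, while the invariance of $\mathcal C$ under the relevant compositions is clear. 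Microscopic coercivity holds with constant $\Lambda_1=\tfrac12\sigma^2(d-1)$, the smallest nonzero eigenvalue of $-\Delta_{\mathbb{S}^{d-1}}$ being $d-1$. For macroscopic coercivity I would use Corollary \ref{Cor_H3_equivalent}: for $u=u(x)$ with $\int u\,e^{-V}\mathrm{d}x=0$ one has $\|APu\|_{L^2(\mu)}^2=\int_{\mathbb{R}^d}\bigl(\int_{\mathbb{S}}|\omega\cdot\nabla_x u(x)|^2\,\mathrm{d}\nu\bigr)e^{-V}\mathrm{d}x=\tfrac1d\|\nabla_x u\|_{L^2(e^{-V}\mathrm{d}x)}^2$, using $\int_{\mathbb{S}}\omega_i\omega_j\,\mathrm{d}\nu=\delta_{ij}/d$, and by the assumed Poincar\'e inequality this is $\ge\tfrac{\Lambda}{d}\|u\|_{L^2(e^{-V}\mathrm{d}x)}^2$, so $\Lambda_2=\Lambda/d$. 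Finally, for the boundedness of the auxiliary operators built from $B=(1+(AP)^*AP)^{-1}(AP)^*$ I would apply Lemma \ref{Pp_suff_H4_part1} and Proposition \ref{Pp_suff_H4_part2}; the estimates they require involve $\text{grad}_{\mathbb{S}}\Phi$ and its $x$-derivatives, i.e.\ $\nabla V$ and $\nabla^2 V$, and the hypothesis $|\nabla^2 V|\le c(1+|\nabla V|)$ is exactly what makes the needed relative bounds hold; this part is calculation-heavy because of the non-commuting geometric operators on $\mathbb{S}$.

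With \textbf{(D)} and \textbf{(H1)}--\textbf{(H4)} in hand, Theorem \ref{Thm_Hypocoercivity} produces strictly positive, finite constants $\kappa_1,\kappa_2$, explicit in $\Lambda_1,\Lambda_2$ and the constants from (H4), hence in $\Lambda,c,d,\sigma$, with $\|T(t)h\|_{L^2(\mu)}\le\kappa_1 e^{-\kappa_2 t}\|h\|_{L^2(\mu)}$ for all $h$ in the mean-zero subspace $\{h\in L^2(\mathbb{M},\mu):\int_\mathbb{M} h\,\mathrm{d}\mu=0\}$. Applying this to $h=g-\int_\mathbb{M} g\,\mathrm{d}\mu$ and using conservativity $\int_\mathbb{M} T(t)g\,\mathrm{d}\mu=\int_\mathbb{M} g\,\mathrm{d}\mu$ — which follows from the invariance of $\mu$, giving $L^{+}1=0$ and hence $T(t)^*1=1$ — yields the asserted exponential decay, completing the proof.
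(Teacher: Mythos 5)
Your overall architecture matches the paper's: work on $L^2(\mathbb{M},\mu)$ with core $C_c^\infty(\mathbb{M})$, split $L=S-A$, verify Data (D) and (H1)--(H4) for the fibrewise average projection, and invoke Theorem~\ref{Thm_Hypocoercivity}. Your constants $\Lambda_m=\tfrac12\sigma^2(d-1)$ and $\Lambda_M=\Lambda/d$ and the $\int_{\mathbb{S}}\omega_i\omega_j\,\mathrm{d}\nu=\delta_{ij}/d$ computation are exactly what the paper uses. However, there are three genuine gaps.

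First, the perturbation step for essential m-dissipativity is backwards and cannot work as stated. Treating $\omega\cdot\nabla_x$ as a relatively bounded perturbation of $-\text{grad}_{\mathbb{S}}\Phi\cdot\nabla_\omega+\tfrac12\sigma^2\Delta_{\mathbb{S}}$ fails: take $f=f(x)$ independent of $\omega$; then the putative dominating term vanishes while $\|\omega\cdot\nabla_x f\|$ is unbounded, so no relative bound of the form $\|\omega\cdot\nabla_x f\|\le a\,\|(\cdots)f\|+b\,\|f\|$ can hold. The paper proceeds the other way: it establishes essential m-dissipativity of $L_0=\omega\cdot\nabla_x+\tfrac{\sigma^2}{2}\Delta_{\mathbb{S}}$ via H\"ormander hypoellipticity and a cutoff argument (Theorem~\ref{Thm_Essential_mdissipativity_smoothcase}), then conjugates by $U=e^{-V/2}$ and treats the remaining $\Phi$-terms as a Kato-type perturbation of $L_0$, controlled by the \emph{Lipschitz} constant of $V$ (Lemma~\ref{Lm_ess_mdiss_localLip_2}), and finally passes from globally to locally Lipschitz $V$ by localization (Theorem~\ref{Thm_essential_mdissipativity_sphericalvelocityLangevin_locLipschitz}). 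In particular, the Hessian growth hypothesis $|\nabla^2V|\le c(1+|\nabla V|)$ plays no role in the semigroup construction at all; it enters only in the (H4) verification via the elliptic a priori estimates of Dolbeault--Mouhot--Schmeiser (Proposition~\ref{Elliptic_regularity_Prop} and Lemma~\ref{Lm_Appendix_3}). Your attribution of the condition to the m-dissipativity step is wrong, and your perturbation could not be made to work.

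Second, your (H3) argument is incomplete: Corollary~\ref{Cor_H3_equivalent} requires not only the inequality on $\mathcal{D}$ (which you supply) but also essential m-dissipativity of $(PA^2P,\mathcal{D})$, because (H3) must hold on the full domain $D((AP)^*AP)$, not just on the core. On $\mathcal{D}$ one has $PA^2P f=\tfrac1d(\Delta f_S-\nabla V\cdot\nabla f_S)$, the rescaled Witten Laplacian on $\mathbb{R}^d$; the paper proves its essential self-adjointness on $C_c^\infty(\mathbb{R}^d)$ by invoking Bogachev--Krylov--R\"ockner (Proposition~\ref{density_of_PAAPD}). You must supply this. Third, for (D7) the framework needs $1\in D(L)$ and $L1=0$ (i.e. $T(t)1=1$), which the paper obtains by a cutoff argument; your derivation of $T(t)^*1=1$ gives the measure invariance (2.5) but not the initial-data shift used in Definition~\ref{df_solution_Cauchy problem} and the proof of Theorem~\ref{Thm_Hypocoercivity}. (Your remark about essential self-adjointness of $S$ is harmless but not actually required by the framework.)
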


We further remark that the semigroup can be constructed even under very general conditions on $V$, namely $V$ has to be bounded from below and locally Lipschitz continuous, see Theorem \ref{Thm_essential_mdissipativity_sphericalvelocityLangevin_locLipschitz}. Here we apply modern arguments, based on hypoellipticity, from \cite{HN05} as well as use pertubation theory techniques as originally developed in \cite{CG08} and \cite{CG10}. Finally, we refer to \cite{DKMS11} (or also \cite{GKMS12}) concerning the computation of the rate of convergence in terms of $\sigma$.  

Moreover, due to the nature of the equation from above one may think at first sight that methods dealing with the long-time behavior of the classical Langevin equation (or its dual respectively, the so-called linear kinetic Fokker-Planck equation) can also be applied to the spherical velocity Langevin equation in a similar way. However, this is not true at all. It is interesting to note that Villani's Hilbert space hypocoercivity method, see \cite[Theo.~24]{Vil09}, seems to produce non-ending commutator relations for the vector fields occurring in the decomposition of $L$ (as required in \cite{Vil09}) and it is an open problem to apply the methods from \cite{Vil09} to the spherical velocity Langevin equation.

Altogether, the main results obtained in this paper are summarized as follows:
\begin{itemize}
\item
Extending the Hilbert space hypocoercivity method of Dolbeault, Mouhot and Schmeiser from \cite{DMS10} to the Kolmogorov backward setting, see Section \ref{Section_Extension_Hypocoercivity}.
\item
Introducing additionally operator cores into the framework and putting emphasis on domain issues. Thus we now even have the ability to cover existence and construction problems as required in many applications.
\item
Adding a set of data conditions (D) to the conditions (H1)-(H4) from \cite{DMS10}. So we get a complete existence and hypocoercivity Kolmogorov backward setting, see Theorem \ref{Thm_Hypocoercivity}. Moreover, we supplement the extended framework by developing sufficient criteria for the required assumptions. Necessary conditions need now only to be verified on some fixed operator core.
\item
Proving essential m-dissipativity of the spherical velocity Langevin operator with predomain $C^\infty_c(\mathbb{M})$ under weak conditions on the potential, see Section \ref{Section_Construction_Semigroup}, in particular, Theorem \ref{Thm_Essential_mdissipativity_smoothcase} and Theorem \ref{Thm_essential_mdissipativity_sphericalvelocityLangevin_locLipschitz} therein.
\item
Giving a mathematical complete elaboration of the hypocoercivity theorem for the spherical velocity Langevin process. This implies its exponential decay to equilibrium, see Section \ref{Hypocoercivity_section_Spherical_velocity_Langevin} and Theorem \ref{Hypocoercivity_theorem_spherical_velocity_Langevin}.
\end{itemize} 
\section{Extension of the hypocoercivity method of Dolbeault, Mouhot and Schmeiser to the Kolmogorov backward setting} \label{Section_Extension_Hypocoercivity}

As described in the introduction, in this section we extend the hypocoercivity method of Dolbeault, Mouhot and Schmeiser from \cite[Sec.~1.3]{DMS10} to the Kolmogorov backward setting (the notation is explained later on) by putting emphasis on domain issues. We remark again that the elaborations and concepts discussed in the underlying section are motivated and based on the originally ones developed in \cite[Sec.~1.3]{DMS10} (or \cite{DMS09} respectively). Henceforth, the powerful ideas go back to Dolbeault, Mouhot and Schmeiser and some proofs are similar and analogous to the proofs done in \cite[Sec.~1.3]{DMS10}. See especially the proof of Lemma \ref{Boundedness_of_B} and the proof of Theorem \ref{Thm_Hypocoercivity} and compare the formulation of the conditions (H1)-(H4) below with the original conditions (H1)-(H4) as introduced in \cite{DMS10}. But before starting we need some lemmas. For the background in the theory of operator semigroups, the reader may find all informations in \cite{Paz83} and \cite{Gol85}.

\subsection{Some auxiliary lemmas} \label{section_auxiliary_lemmata}

In this section, $\H$ always denotes some real Hilbert space with scalar product $(\cdot,\cdot)_\H$ and induced norm $\| \cdot \|$. All considered operators are assumed to be linear, defined on linear subspaces of $\H$. An operator $(T,D(T))$ with domain $D(T)$ is also abbreviated by $T$. As usual $(T,D(T))$ is called bounded if there exists some constant $c < \infty$ such that $\| Tf \| \leq c \|f\| $ for all $f \in D(T)$. The range of some operator $(T,D(T))$ is abbreviated by $\mathcal{R}(T)$. First we shall recall some basic facts concerning closed operators summarized in the following lemma. 

\begin{Lm} \label{Lm_basics_closed_operators}
Let $(T,D(T))$ be a densely defined, linear operator on $\H$. Let $L$ be a bounded operator with domain $\H$.
\begin{itemize}
\item[(i)]
The adjoint operator $(T^*,D(T^*))$ exists and is closed. If $D(T^*)$ is dense in $\H$, then $(T,D(T))$ is closable and for the closure $(\overline{T},D(\overline{T}))$ it holds $\overline{T}=T^{**}$.
\item[(ii)]
$L^*$ is bounded and $\|L\|=\|L^*\|$.
\item[(iii)]
If $(T,D(T))$ is closed, then $D(T^*)$ is automatically dense in $\H$. Consequently, $T=T^{**}$.
\item[(iv)]
Let $(T,D(T))$ be closed. Then the operator $TL$ with domain 
\begin{align*}
D(TL)=\{ f \in \H~|~Lf \in D(T)\}
\end{align*}
is also closed.
\item[(v)]
$LT$ with domain $D(T)$ need not to be closed. However, $(LT)^*=T^*L^*$.
\end{itemize}
\end{Lm}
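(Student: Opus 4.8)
All five assertions are standard facts of Hilbert-space operator theory, and my plan is to derive them uniformly from the graph-space formalism on $\H\times\H$. Write $\mathcal{G}(T)\subseteq\H\times\H$ for the graph of $(T,D(T))$ and let $U$ be the isometry $U(u,v):=(-v,u)$ of $\H\times\H$, so that $U^2=-\mathrm{id}$ and $U$ carries orthogonal complements to orthogonal complements. The one computation behind everything is the restatement of the definition of the adjoint, namely $\mathcal{G}(T^*)=\bigl(U\,\mathcal{G}(T)\bigr)^\perp$. Since an orthogonal complement is a closed subspace, and density of $D(T)$ forces it to be single-valued, this already gives (i): $(T^*,D(T^*))$ exists and is closed. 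If in addition $D(T^*)$ is dense, then $T^{**}$ is defined, and applying the displayed identity twice together with $U^2=-\mathrm{id}$ yields $\mathcal{G}(T^{**})=\mathcal{G}(T)^{\perp\perp}=\overline{\mathcal{G}(T)}$; hence $\overline{\mathcal{G}(T)}$ is itself a graph, which is exactly closability of $T$, and $\overline{T}=T^{**}$. For (iii) the same identity shows $\mathcal{G}(T)^\perp=\{(-T^*h,h) : h\in D(T^*)\}$; if $T$ is closed then $\mathcal{G}(T)=\mathcal{G}(T)^{\perp\perp}$, and any $v\perp D(T^*)$ makes $(0,v)\perp\mathcal{G}(T)^\perp$, so $(0,v)\in\mathcal{G}(T)$ and $v=T0=0$. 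Thus $D(T^*)$ is dense, whence $\overline{T}=T^{**}$ by (i) and $\overline{T}=T$ since $T$ is closed. For (ii), the computation $\|L^*g\|=\sup_{\|f\|\le1}|(Lf,g)_\H|\le\|L\|\,\|g\|$ gives $\|L^*\|\le\|L\|$, and applying the same to $L^*$ together with $L^{**}=L$ (from (iii) and (i), as $L$ is bounded and everywhere defined, hence closed) gives the reverse bound.

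For (iv) I would simply check graph-closedness: if $f_n\in D(TL)$ with $f_n\to f$ and $TLf_n\to g$, then $Lf_n\to Lf$ by boundedness of $L$, and closedness of $T$ applied to the sequence $\bigl(Lf_n,T(Lf_n)\bigr)_n$ in $\mathcal{G}(T)$ forces $Lf\in D(T)$ with $T(Lf)=g$, that is $f\in D(TL)$ and $TLf=g$. For the first clause of (v) a trivial example is enough: on an infinite-dimensional $\H$ take any densely defined, non-closed $(T,D(T))$ and $L=\mathrm{id}$, so $LT=T$ is not closed. For the identity $(LT)^*=T^*L^*$, note that $D(LT)=D(T)$ is dense so $(LT)^*$ exists; for $g\in\H$ the relation $(LTf,g)_\H=(f,h)_\H$ for all $f\in D(T)$ becomes, via $(LTf,g)_\H=(Tf,L^*g)_\H$, the statement that $L^*g\in D(T^*)$ with $T^*(L^*g)=h$. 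Since by (ii) $L^*$ is bounded and everywhere defined, the natural domain is $D(T^*L^*)=\{g\in\H : L^*g\in D(T^*)\}$, and the equivalence is precisely the asserted identity of operators.

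There is essentially no real obstacle here: the lemma is by design a compilation of textbook facts collected for later use. If I had to single out the one genuinely non-formal point, it is part (iii) --- the automatic density of $D(T^*)$ for closed $T$ --- which really does need the double-orthogonal-complement argument rather than a norm estimate; and in (v) the only care required is the domain bookkeeping making $D((LT)^*)$ and $D(T^*L^*)$ literally coincide, which is exactly the place where one uses that $L^*$ is everywhere defined, as supplied by (ii).
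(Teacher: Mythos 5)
Your proof is correct throughout, but there is no paper proof to compare against: the paper introduces this lemma with ``First we shall recall some basic facts concerning closed operators,'' i.e.\ all five assertions are simply cited as standard background and left unproved (the reference implicitly invoked nearby is \cite{Ped89}). Your graph-space derivation --- encoding $\mathcal{G}(T^*)=\bigl(U\,\mathcal{G}(T)\bigr)^\perp$ for $U(u,v)=(-v,u)$ and obtaining (i) and (iii) via double orthogonal complements --- is precisely the standard textbook route for these facts, and the dependency chain (i)$\Rightarrow$(iii)$\Rightarrow$(ii) you set up is internally consistent and sound; (iv) and the identity in (v) are also verified correctly, including the key observation that $L^*$ being everywhere defined makes $D((LT)^*)$ and $D(T^*L^*)$ literally coincide.

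One small remark on the negative clause of (v): taking $L=\mathrm{id}$ with $T$ non-closed does refute the literal statement, but the interesting and clearly intended case is $T$ closed, since every application in the paper is to a closed $T$. A counterexample covering that case as well: on $\H=\ell^2$ let $T$ be multiplication by $n$ on its maximal domain (closed, unbounded) and $L$ multiplication by $1/n$; then $LT$ is the identity restricted to $D(T)$, a bounded operator defined on a dense proper subspace, hence not closed. This strengthens your example at no extra cost and shows that $(iv)$ genuinely has no analogue with $L$ on the left.
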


Here is the first lemma which will be essential for the upcoming considerations.

\begin{Lm} \label{Lm_APi}
Let $(T,D(T))$ be (anti-)symmetric and let $P:\H \rightarrow \H$ be an orthogonal projection (i.e., $P$ is symmetric and $P^2=P$) satisfying $P(\D) \subset D(T)$ for some subspace $\D \subset D(T)$ which is dense in $\H$. Then we have:
\begin{itemize}
\item[(i)]
$D(T) \subset D((TP)^*)$ and $(TP)^*_{|D(T)}=-P T$ holds in case $T$ is antisymmetric and $(TP)^*_{|D(T)}=P T$ in case $T$ is symmetric.
\item[(ii)]
$P (T P)^*= (T P)^*$ on $D((T P)^*)$.
\end{itemize}
\end{Lm}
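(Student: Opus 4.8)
\emph{Proof proposal.} The plan is to argue directly from the definition of the adjoint operator. I would first fix the domain convention: $TP$ is the composition $T\circ P$, and since $P$ is everywhere defined its natural domain is $D(TP)=\{f\in\H \mid Pf\in D(T)\}$. Because $P(\D)\subset D(T)$, this domain contains the dense set $\D$, so $TP$ is densely defined and $(TP)^*$ is a well-defined (closed) operator. Both parts then reduce to short manipulations combining the (anti-)symmetry relation for $T$ with the two defining properties of the orthogonal projection $P$, namely $P=P^*$ and $P^2=P$.

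For part (i), I would fix $g\in D(T)$ and pair $TP$ against an arbitrary $f\in D(TP)$. Since $Pf\in D(T)$ and $g\in D(T)$, the (anti-)symmetry of $T$ rewrites $(TPf,g)_\H$ as $\pm(Pf,Tg)_\H$ (sign $-$ in the antisymmetric case, $+$ in the symmetric case), and then $P=P^*$ turns this into $\pm(f,PTg)_\H$. As $PTg\in\H$ and the identity $(TPf,g)_\H=\pm(f,PTg)_\H$ holds for every $f\in D(TP)$, the definition of the adjoint yields $g\in D((TP)^*)$ with $(TP)^*g=\pm PTg$. Since $g\in D(T)$ was arbitrary, this gives $D(T)\subset D((TP)^*)$ together with the stated formula $(TP)^*_{|D(T)}=-PT$ in the antisymmetric case and $(TP)^*_{|D(T)}=PT$ in the symmetric case.

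For part (ii), the observation I would exploit is that $D(TP)$ is $P$-invariant and $TP$ is insensitive to precomposition with $P$: for $f\in D(TP)$ one has $P(Pf)=Pf\in D(T)$, hence $Pf\in D(TP)$, and $TP(Pf)=T(P^2f)=T(Pf)=TPf$. Now, given $g\in D((TP)^*)$ and writing $h:=(TP)^*g$, the defining identity of the adjoint applied once to $f$ and once to $Pf$ gives $(f,h)_\H=(TPf,g)_\H=(TP(Pf),g)_\H=(Pf,h)_\H=(f,Ph)_\H$ for every $f\in D(TP)$. Since $D(TP)\supset\D$ is dense in $\H$, this forces $h=Ph$, i.e.\ $(TP)^*g=P(TP)^*g$ on all of $D((TP)^*)$, which is the assertion.

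I do not expect a genuine obstacle here; the only point that requires care is the bookkeeping of the unbounded domain of $TP$ — specifically that it contains $\D$ (so that the adjoint exists at all) and is stable under $P$ (which is precisely what makes the argument in part (ii) go through). Everything else is a one-line computation using the symmetry relation for $T$ and the self-adjointness and idempotency of $P$.
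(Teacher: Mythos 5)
Your proposal is correct and takes essentially the same approach as the paper: part (i) is proved via the same direct adjoint computation using $P=P^*$ and the (anti-)symmetry of $T$ on $Pf,g\in D(T)$, and part (ii) rests on the same two facts — $P$-stability of $D(TP)$ via $P^2=P$ and density — with only a cosmetic difference in which side the defining identity of the adjoint is applied to.
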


\begin{proof} Assume that $T$ is antisymmetric. The symmetric case is proven analogously. Part (i): For $f \in D(TP)$ and $g \in D(T)$ we have
\begin{align*}
\left( T P f, g \right)_{\H} = - \left( P f, T g \right)_{\H} = - \left(  f, P T g \right)_{\H}
\end{align*}
since $P f \in D(T)$ and $(T,D(T))$ is antisymmetric. Thus $g \in D((TP)^*)$. Furthermore, $(TP)^*g=-P Tg$ since $D(TP)$ is dense in $\H$. For part (ii) let $f \in D((TP)^*)$ and $g \in \D$. Then 
\begin{align*}
\left( P (TP)^*f,g \right)_{\H} = \left(  (TP)^*f, P g \right)_{\H} = \left(  f, (TP )P g \right)_{\H} = \left(  f, TP  g \right)_{\H} = \left(  (T P)^* f,  g \right)_{\H}
\end{align*}
where the third equality follows since $P(P g)=P g \in D(T)$, i.e, $P g \in D(T P)$ and the last one holds since $g \in D(TP)$. The claim follows since $\D$ is dense in $\H$.
\end{proof}

Assume that $(T,D(T))$ is closed and let the conditions from Lemma \ref{Lm_APi} hold. Then $(T P, D(T P))$ is also closed and densely defined. Thus von Neumann's theorem, see e.g.~\cite[Theo.~5.1.9]{Ped89}, implies that the operator $I+(TP)^*TP$ with domain 
\begin{align*}
D((TP)^*(TP))=\big\{f \in D(TP)~\big|~TP f \in D((TP)^*)\,\big\}
\end{align*}
is injective and surjective (with range equal to $H$) and admits a bounded linear inverse. We define the operator $B$ with domain $D((TP)^*)$ via
\begin{align} \label{definition_of_B}
B:=(I+(TP)^*T P)^{-1}(TP)^*.
\end{align}
By using Lemma \ref{Lm_basics_closed_operators}, it is easily checked that $B^*=TP(I+(TP)^*T P)^{-1}$. But $B^*$ is closed and has domain $D(B^*)=\H$, hence $B^*$ is bounded. Consequently, see Lemma \ref{Lm_basics_closed_operators}, $(B,D((TP)^*))$ is already bounded. Its unique extension to some continuous linear operator on $\H$ is denoted by $B$. 

\begin{Rm}
One can even show that the operator $B$ is explicitly given on $\H$ as 
\begin{align*} 
B=(TP)^*(I+TP(TP)^*)^{-1},
\end{align*}
see \cite[Theo.~5.1.9]{Ped89}. And moreover, the latter has operator norm less or equal than $1$. However, we do not need this information in the following.
\end{Rm}

Under some additional assumptions, the operator norm of $B$ can further be specified. Compare the upcoming statement with its original version, see Lemma 1 in \cite{DMS10}.

\begin{Lm} \label{Boundedness_of_B}
Let $(T,D(T))$ be closed and (anti-)symmetric and let $P:\H \rightarrow \H$ be an orthogonal projection satisfying $P(\D) \subset D(T)$ for some subspace $\D \subset D(T)$ which is dense in $\H$. Then $P B =B$. Moreover, additionally assume that
\begin{align} \label{Assumption_(A1)}
P T P _{| \D}=0.
\end{align}
Then we even have
\begin{align} \label{norm_of_B}
\|Bf\| \leq \frac{1}{2} \|(I-P)f\| ~~\mbox{ for all } f \in \H.
\end{align}
Furthermore, the operator $(TB,\D)$ is bounded and it holds 
\begin{align*}
\|TBf\| \leq \| (I- P)f\| ~~\mbox{ for all } f \in \D.
\end{align*}
\end{Lm}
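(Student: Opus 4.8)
The plan is to work throughout on the dense subspace $\D$, where everything is well-behaved, and then extend to $\H$ by boundedness and density. First I would record the identity $PB = B$: since $B = (I+(TP)^*TP)^{-1}(TP)^*$ and, by Lemma \ref{Lm_APi}(ii), $P(TP)^* = (TP)^*$ on $D((TP)^*)$, and since $P$ commutes with $(I+(TP)^*TP)^{-1}$ (because $P$ commutes with $(TP)^*TP$ on the relevant domain — here one uses that $TP \circ P = TP$ and $(TP)^*$ absorbs $P$ on the left, so that $P$ commutes with the whole resolvent via the symmetry of $P$ and a standard resolvent argument), one gets $PB = B$ on $D((TP)^*)$ and hence on all of $\H$ by continuity. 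Equivalently $B = PB$ means $(I-P)B = 0$, i.e. $\mathcal{R}(B) \subset \mathcal{R}(P)$, which also gives $B = B(I-P)$-type statements after noting $(TP)^* = (TP)^*(I-P)$ on $\D$ is \emph{not} what we want; rather the key will be how $(TP)^*$ acts, computed via Lemma \ref{Lm_APi}(i).

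Next, the quantitative bound. Fix $f \in \D$ and set $g := Bf \in D((TP)^*TP)$, so that $g + (TP)^*TPg = (TP)^*f$. Taking the inner product with $g$ gives $\|g\|^2 + \|TPg\|^2 = ((TP)^*f, g)_\H = (f, TPg)_\H$. Now I want to replace $f$ by $(I-P)f$ on the right: since by Lemma \ref{Lm_APi}(i) we have $g = Pg$ (from $PB=B$) hence $TPg = Tg$-type manipulations, and more importantly $(f, TPg)_\H = (TPg, f)_\H$; writing $f = Pf + (I-P)f$, the term $(TPg, Pf)_\H$ should vanish using assumption \eqref{Assumption_(A1)}: indeed $(TPg, Pf)_\H = -(Pg, TPf)_\H = -(g, PTPf)_\H = 0$ since $PTP_{|\D} = 0$ and $Pf$ needs to be in $\D$ — here one must be slightly careful, but since $g = Pg$ and one can move $P$ across using Lemma \ref{Lm_APi}, the pairing reduces to $(g, PTP f)_\H$ with $f \in \D$, which is zero. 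Thus $\|g\|^2 + \|TPg\|^2 = (TPg, (I-P)f)_\H \leq \|TPg\|\,\|(I-P)f\|$. Applying Young's inequality $\|TPg\|\,\|(I-P)f\| \leq \frac12\|TPg\|^2 + \frac12\|(I-P)f\|^2$ (or rather $\|TPg\|\|(I-P)f\| \le \|TPg\|^2/2 + \|(I-P)f\|^2/2$ after absorbing), one obtains $\|g\|^2 + \tfrac12\|TPg\|^2 \leq \tfrac12 \|(I-P)f\|^2$, which yields simultaneously $\|Bf\| = \|g\| \leq \tfrac12\|(I-P)f\|$ — wait, this gives $\|g\|^2 \le \tfrac12\|(I-P)f\|^2$; to get the sharp constant $\tfrac12$ one instead keeps $\|g\|^2 + \|TPg\|^2 \le \|TPg\|\|(I-P)f\|$ and notes $\|TPg\| \le \|(I-P)f\|$ from dropping $\|g\|^2 \ge 0$, whence $\|g\|^2 \le \|TPg\|\big(\|(I-P)f\| - \|TPg\|\big) \le \tfrac14\|(I-P)f\|^2$ by maximizing $t(M-t)$ at $t = M/2$. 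This is the step to get \eqref{norm_of_B}; then $TBf = TPg$ (again using $g = Pg$) satisfies $\|TBf\| = \|TPg\| \leq \|(I-P)f\|$, and since $D(TP)^* \supset \D$ and $TP$ is closed, $(TB,\D)$ is bounded.

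Finally I would extend both inequalities from $f \in \D$ to $f \in \H$: for \eqref{norm_of_B} this is immediate since $B$ is the continuous extension and both sides are continuous in $f$; for the $TB$ bound, since $\D$ is dense and $TB$ is bounded on $\D$, it has a unique bounded extension satisfying the same estimate, and one checks this extension agrees with $T$ applied to $B$ where defined using closedness of $T$ (or of $TP$). The main obstacle I anticipate is the bookkeeping of domains in the cancellation step — justifying that $(TPg, Pf)_\H = 0$ rigorously, since this requires $Pf \in D(TP)$ (true when $f \in \D$ by hypothesis $P(\D)\subset D(T)$ together with $P^2f = Pf \in D(T)$) and the repeated use of Lemma \ref{Lm_APi}(i)–(ii) to shuttle $P$ through $(TP)^*$ and $T$; getting the constant exactly $\tfrac12$ rather than $\tfrac1{\sqrt2}$ also needs the slightly more careful optimization rather than a crude Young inequality.
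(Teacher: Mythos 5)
Your proposal is essentially correct and follows the same core strategy as the paper: write the resolvent identity $g + (TP)^*TPg = (TP)^*f$ for $g = Bf$, pair with $g$ to get the energy identity $\|g\|^2 + \|TPg\|^2 = ((I-P)f, TPg)_\H$ after killing the $Pf$-contribution via (A1), and optimize. Two remarks on where your route diverges cosmetically from the paper's.

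First, for $PB = B$: you argue that $P$ commutes with $(I + (TP)^*TP)^{-1}$ and then push $P$ through the resolvent. This is correct (your sketch — $P$ preserves $D((TP)^*TP)$, $TP \circ P = TP$, and $P(TP)^* = (TP)^*$ by Lemma \ref{Lm_APi}(ii) — does assemble into a valid commutation argument), but the paper does something shorter and more robust: apply $P$ directly to $(TP)^*f = g + (TP)^*TPg$, use Lemma \ref{Lm_APi}(ii) to absorb $P$ on the two $(TP)^*$-terms, and read off $Pg = g$ by comparison. That sidesteps the domain bookkeeping needed to commute through the resolvent.

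Second, for the cancellation: you show $(Pf, TPg)_\H = 0$ by moving $T$ across with antisymmetry and then invoking $PTP_{|\D} = 0$. The paper instead notes $(TP)^*Pf = -PTPf = 0$ directly from Lemma \ref{Lm_APi}(i) plus (A1), so that $(TP)^*f = (TP)^*(I-P)f$ on $\D$; these two derivations are algebraically equivalent. For the constant, your optimization of $t(M-t)$ is the same as the paper's Young inequality $\|(I-P)f\|\,\|TPg\| \le \tfrac14\|(I-P)f\|^2 + \|TPg\|^2$ with the correctly tuned parameter; your initial $1/\sqrt{2}$ misfire was a red herring that you caught yourself. All in all there is no gap — the proof is sound — just slightly longer detours at the $PB=B$ and cancellation steps than the paper takes.
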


\begin{proof}
Let $f \in D((TP)^*)$ and define $g$ as $g=Bf$. Thus $g \in D((TP)^*(TP))$ and $g \in D(TP)$ by \eqref{definition_of_B}. So by the representation of $B$ on $D((TP)^*)$ it holds
\begin{align*} 
(T P)^*f=g + (TP)^*(TP)g.
\end{align*}
Applying $P$ on both sides and using Lemma \ref{Lm_APi}, we conclude $Pg=g$. Hence $PB=B$ holds on $\H$ due to continuity of $B$ and $P$. By taking the scalar product with respect to $g$ on both sides of the latter equation  we get
\begin{align} \label{Eq_Boundedness_of_B_2}
\|g\|^2 + \| T P g\|^2 = \|g\|^2 + \left( (TP)^*(TP)g, g \right)_\H = \left((T P)^*f,g \right)_\H. 
\end{align}
Now let $f \in \D$. Then $P f \in D(T)$ and $(TP)^*P f =0$ by Lemma \ref{Lm_APi} and Assumption \eqref{Assumption_(A1)}. By using additionally that $(I-P)f \in D(T) \subset D((T P)^*)$, Equation $\eqref{Eq_Boundedness_of_B_2}$ yields
\begin{align} \label{Eq_Boundedness_of_B_3}
\|g\|^2 + \| T P g\|^2 &=  \left((T P)^*(I-P)f,g \right)_\H  = \left((I-P)f,TP g \right)_\H \nonumber\\
&\leq \| (I-P)f\| \| T P g\|  \leq \frac{1}{4} \| (I-P)f\|^2 + \| T P g\|^2 .
\end{align}
Hence for all $f \in \D$ we get $\|Bf\| \leq \frac{1}{2} \| (I-P)f \|$. Now \eqref{norm_of_B} follows since $\D$ is dense in $\H$. For the last statement, observe that really $\D \subset D((TP)^*) \subset D(TB)$ since $B(D((TP)^*)) \subset D(T P)$ and $P B=B$. Then use again Inequality \eqref{Eq_Boundedness_of_B_3}.
\end{proof}

\subsection{The hypopcoercivity method in the Kolmogorov backward setting} Now we start with the previously mentioned extension of the original hypocoercivity method from \cite{DMS10} to the Kolmogorov backward setting. First some heuristics and physical motivation to understand the framework. The motivation shall be read, of course, only on an informal level.
\medskip

\textbf{Motivation and Interpretation.} First let us consider a manifold-valued (Stratonovich) stochastic differential equation (SDE) with state space $E$ being some Riemannian manifold $\mathbb{M}$. Let $X^x=(X^x_t)_{t \geq 0}$ denotes the solution to this SDE starting from $x \in \mathbb{M}$. Then $X^x$ provides a solution to the $L$-martingale problem where $L$ is the so-called Kolmogorov backward operator associated to the SDE. Let $g:\mathbb{M} \rightarrow \mathbb{R}$ be a suitable test function. Then $u(t,x)=\mathbb{E}[g(X^x_t)]$ solves the Kolmogorov backward partial differential equation (PDE) of the form $\partial_t u (t,x) = L u (t,x)$ in the classical sense (pointwisely). Here $L$ only acts on the $x$-variable and $\mathbb{E}$ denotes expectation. Often one is interested in studying the longtime behavior of $X$. If $\mu$ denotes the explicitly known candidate for the stationary (normalized) distribution, whose density with respect to the volume measure on $\mathbb{M}$ is a stationary solution to the associated Fokker-Planck equation, then $X_t^x$ shall be distributed accordingly to $\mu$ for large values of $t \geq 0$. In other words, one is interested in studying the convergence of $u(t,x)$ to $\int_{\mathbb{M}} g\, \mathrm{d}\mu$ as $t \rightarrow \infty$. 

Next we consider the Kolmogorov-backward PDE as an abstract Cauchy problem  $\dot{u}(t)  = Lu(t)$ on the Hilbert space $\H=L^2(E,\mu)$ with $\mu$ being the desired equilibrium distribution. Now $u(t)=T(t)g$ where $(T(t))_{t \geq 0}$ is the strongly continuous semigroup which $L$ assumes to generate on $\H$ and $E$ may even be a more general state space. This is indeed a natural choice for the underlying Hilbert space. Therefore, we just remark that under suitable assumptions on the diffusion operator $L$, the theory of (generalized) Dirichlet forms then really implies the existence of a Markov-process $X$ solving the $L$-martingale problem such that $T(t)f(x)= \mathbb{E}^x[g(X_t)]$ holds for quasi-every $x \in E$, for instance see \cite{Roe99}, \cite{Tru00}, \cite{St99} and \cite{CG08} as well as conditions (D6) and (D7) below. Motivated by the previous considerations, we are interested in studying the convergence, especially the exponential decay, of $T(t)g$ to $\left(g,1\right)_\H$ in $\H$ as $t \rightarrow \infty$. In case $L$ is degenerate, mathematical demanding problems arising. The whole program is realized in the upcoming setup, called the \textit{Kolmogorov backward (hypocoercivity) setting}.
\medskip

After this motivation, we now switch to a precise, formal, general framework suitable for studying the long-time behavior of solutions to the Cauchy problem of some Kolmogorov backward PDE using a Hilbert space approach.

\begin{Data} \label{Data} We require the following conditions which are assumed for the rest of this section without further mention them again.
\begin{itemize}
\item [(D1)] \textit{The Hilbert space:} Let $(E,\mathcal{F},\mu)$ be some probability space and define $\H$ to be $\H=L^2(E,\mu)$ equipped with the usual standard scalar product $\left(\cdot,\cdot\right)_\H$.
\item [(D2)] \textit{The $C_0$-semigroup and its generator $L$:} $(L,D(L))$ is some linear operator on $\H$ generating a strongly continuous semigroup (abbreviated $C_0$-semigroup) $(T(t))_{t \geq 0}$.
\item [(D3)] \textit{Core property of $L$:}  Let $\D \subset D(L)$ be a dense subspace of $\H$ which is a core for $(L,D(L))$. 
\item [(D4)] \textit{Decomposition of $L$:} Let $(S,D(S))$ be symmetric and let $(A,D(A))$ be closed and antisymmetric on $\H$ such that $\D \subset D(S) \cap D(A)$ as well as $L_{|\D}=S-A$.
\item [(D5)] \textit{Orthogonal projections:} Let $P:\H \rightarrow \H$ be an orthogonal projection satisfying $P(\H) \subset D(S)$, $S P=0$ as well as $P(\D) \subset D(A)$, $AP (\D) \subset D(A)$. Moreover, we introduce
$P_{S}:\H \rightarrow \H$ as
\begin{align*}
P_S f:= P f + \left( f,1 \right)_\H,~f \in \H.
\end{align*}
\item [(D6)] \textit{The invariant measure:} Let $\mu$ be invariant for $(L,\D)$ in the sense that
\begin{align} \label{Df_invariant_measure}
\left( Lf,1 \right)_\H = \int_E Lf\, \mathrm{d}\mu=0 ~\mbox{ for all $f \in \D$}.
\end{align}
\item [(D7)] \textit{Conservativity:}  $1 \in D(L)$ and $L1=0$.
\end{itemize}
\end{Data}

\begin{Rm}
Informally speaking, the reader should think of $P_S$ as being the projection onto the kernel of $S$ (provided it exists). This is motivated by Condition (H2) below and see also the choice of $P_S$ in our application in Section \ref{Hypocoercivity_section_Spherical_velocity_Langevin}.
\end{Rm}

The first basic assumption is introduced next. Compare it with the original framework from \cite{DMS10}.

\begin{H1} \label{H1} Assume that
\begin{align*} 
P A P _{\,| \D}=0.
\end{align*}
\end{H1}

\begin{Df} First the operator $B$ is defined similar as in Subsection \ref{section_auxiliary_lemmata} as the unique extension of $(B,D((AP)^*))$ to some continuous linear operator on $\H$ where
\begin{align*}
B:=(I+(AP)^*A P)^{-1}(AP)^* \mbox{ on } D((AP)^*). 
\end{align*}
Let $0 \leq \varepsilon < 1$. The \textit{modified entropy functional} is defined as in \cite{DMS10} by
\begin{align*}
\mathrm{H}_{\varepsilon}[f]=\frac{1}{2} \|f\|^2 + \varepsilon \left(Bf,f\right)_\H,~f \in \H.
\end{align*}
Now assume that (H1) holds. Then Lemma \ref{Boundedness_of_B} yields
\begin{align} \label{equivalence_H_norm}
\frac{1-\varepsilon}{2} \|f\|^2 \leq \mathrm{H}_\varepsilon[f] \leq \frac{1+\varepsilon}{2} \|f\|^2,~f \in \H.
\end{align}
\end{Df}

\begin{Df} \label{df_solution_Cauchy problem}
Let $(u(t))_{t\geq 0}$ denotes the classical solution to the abstract Cauchy problem $\dot{u}  = Lu$ in $\H$ with initial condition $g \in D(L)$, i.e., the map $t \mapsto u(t) \in \H$ is continuously differentiable on $[0,\infty)$, we have $u(t) \in D(L)$ for $t \geq 0$ and $u$ satisfies
\begin{align*} 
\frac{d}{dt}u(t) =Lu(t) \,\mbox{ for  all } t \geq 0,~u(0)=g.
\end{align*}
It is well-known that $u(t)$ is uniquely given by $u(t)=T(t)g$ for every $t \geq 0$. Since $\D$ is a core for $(L,D(L))$, the defining property in (D6) carries over to all elements from $D(L)$. Thus we conclude
\begin{align} \label{eq_invariant_measure}
\int_E T(t)g~\mathrm{d}\mu = \left(u(t),1\right)_\H=\left(u(0),1\right)_\H=\int_E g~\mathrm{d}\mu 
\end{align}
for all $t \geq 0$. This justifies the name \textit{invariant measure} in (D6). Now let $g$ still be an element from $D(L)$ and define $f(t)$ as
\begin{align*}
f(t)=u(t) - \left( g, 1\right)_{\H} ~\mbox{ for all } t \geq 0.
\end{align*}
Then by (D7), it follows that $(f(t))_{t \geq 0}$ is the unique classical solution to the abstract Cauchy problem $\dot{u}  = Lu$ in $\H$ with initial condition $g -  \left( g, 1\right)_{\H} \in D(L)$. By (D1) and \eqref{eq_invariant_measure} we get
\begin{align} \label{scalar_product_gt}
\left(f(t),1\right)_\H =0 ~\mbox{ for all } t \geq 0.
\end{align}
In case $g \in H$, the mapping $[0,\infty) \ni t \mapsto T(t)g \in \H$ is the mild solution of the abstract Cauchy problem $\dot{u}  = Lu$ in $\H$ with initial condition $g \in \H$. Note that the property from \eqref{eq_invariant_measure} is still satisfied since $D(L)$ is dense in $\H$.
\end{Df}

\begin{Rm}
(D7) also implies $T(t)1=1$ for all $t \geq 0$ since $\frac{d}{dt} T(t)1=T(t)L1=0$ for all $t \geq 0$ and $T(0)1=1$. This justifies the name \textit{conservativity} in (D7).
\end{Rm}

\begin{Df} \label{Df_entropy_dissipation_functional}
Let $0 \leq \varepsilon < 1$. Define the \textit{entropy dissipation functional} $\mathrm{D}_\varepsilon$ as in \cite{DMS10} via
\begin{align*}
\mathrm{D}_{\varepsilon}[t]:=-\frac{d}{dt} \mathrm{H}_\varepsilon[f(t)] ~\mbox{ for all }t \geq 0.
\end{align*}
Here $(f(t))_{t \geq 0}$ is the classical solution to the abstract Cauchy problem $\dot{u}  = Lu$ in $\H$ with initial value $g-\left(g,1\right)_{\H} \in D(L)$ as in Definition \ref{df_solution_Cauchy problem}. Thus we get
\begin{align*}
\mathrm{D}_\varepsilon[t] =  \mathrm{I}_1[f(t)] - \varepsilon ~\mathrm{I}_2[f(t)] - \varepsilon ~\mathrm{I}_3[f(t)]
\end{align*}
for every $t \geq 0$. Here the $\mathrm{I}_n:D(L) \rightarrow \mathbb{R}$ are defined by
\begin{align*}
&\mathrm{I}_1[f]=  -\left( Lf,f\right)_\H,~\mathrm{I}_2[f]= \left( B L f, f\right)_\H,~\mathrm{I}_3[f]= \left(B f , Lf\right)_\H,~f \in D(L).
\end{align*}
\end{Df}

After these definitions, we obtain the following lemma.

\begin{Lm} \label{Lm_1}
Assume that (H1) holds. Then we have
\begin{align*}
\mathrm{I}_3[f] \leq \|(I-P)f\|\|f\|,~f \in D(L).
\end{align*}
\end{Lm}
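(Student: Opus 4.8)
The plan is to estimate $\mathrm{I}_3[f] = (Bf, Lf)_\H$ for $f \in D(L)$ by splitting $f$ via the projection $P$ and exploiting the structural information about $B$ gathered before. First I would recall from Lemma \ref{Boundedness_of_B} that $PB = B$ (this requires only (H1), which we are assuming), so that $(Bf, Lf)_\H = (Bf, PLf)_\H$; more precisely, since $B = PB$ and $P$ is symmetric, $(Bf, Lf)_\H = (PBf, Lf)_\H = (Bf, PLf)_\H$. The point of this move is to bring a factor of the form $PLf$ or, after further manipulation, a factor involving only the "good" part of $f$, namely $(I-P)f$, into play so that the asserted bound $\|(I-P)f\|\,\|f\|$ can emerge.

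Next I would try to show that only the $(I-P)f$ component of $f$ survives inside $\mathrm{I}_3$. The natural way: write $f = Pf + (I-P)f$, and argue that $B(Pf)$ contributes nothing. Since $B$ is the continuous extension of $(I + (AP)^*AP)^{-1}(AP)^*$ defined on $D((AP)^*)$, and for $g \in \D$ one has $Pg \in D(A) \subset D((AP)^*)$ with $(AP)^*Pg = -PAPg = 0$ by Lemma \ref{Lm_APi}(i) and (H1), it follows that $B$ annihilates $P\D$; by density of $\D$ and continuity of $B$ and $P$, $B$ annihilates the whole range of $P$, i.e. $BP = 0$ on $\H$. Hence $Bf = B(I-P)f$, and then by Lemma \ref{Boundedness_of_B} (the bound $\|Bh\| \le \frac12\|(I-P)h\|$, valid under (H1) together with the assumption $PAP_{|\D} = 0$ which is exactly (H1)) we get $\|Bf\| = \|B(I-P)f\| \le \frac12\|(I-P)(I-P)f\| = \frac12\|(I-P)f\|$. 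Then Cauchy--Schwarz gives $\mathrm{I}_3[f] = (Bf, Lf)_\H \le \|Bf\|\,\|Lf\|$, but this produces $\|Lf\|$, not $\|f\|$, so this crude route is not quite the intended one and I would need the dual/adjoint manipulation instead: move $L$ (or rather use $B = PB$ and push $P$ across $Lf$) so that what pairs against $Bf$ is bounded by $\|f\|$.

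The refined step I expect to be the crux: rewrite $(Bf, Lf)_\H$ using $B = B(I-P)$ and $PB = B$. Writing $\mathrm{I}_3[f] = (B(I-P)f, Lf)_\H$ and then using the adjoint relation, one wants $(B(I-P)f, Lf)_\H = ((I-P)f, B^* Lf)_\H$ with $B^* = AP(I + (AP)^*AP)^{-1}$ a bounded operator of norm $\le 1$ (this was noted in the Remark after the definition of $B$). That gives $\mathrm{I}_3[f] \le \|(I-P)f\|\,\|B^* Lf\| \le \|(I-P)f\|\,\|Lf\|$ — still $\|Lf\|$. So the honest route must instead be: observe $(Bf, Lf)_\H = (PBf, Lf)_\H = (Bf, PLf)_\H$ and that $PLf = PSf - PAf = -PAf$ on $\D$ (since $SP=0$ forces, by symmetry of $S$ and $P$, $PS = 0$, hence $PSf = 0$); extend to $D(L)$ by the core property. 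Then $\mathrm{I}_3[f] = -(Bf, PAf)_\H = -(B^*PAf \text{-type pairing})$, and the key is that $PA$ restricted appropriately is controlled by $f$ itself, not its derivative — this is where one uses that $B^*$ maps into $D(A)$-type spaces or, more directly, that $(Bf, PAf)_\H$ unwinds via the definition of $B$ to a pairing of $APg$-terms where $g = B(\text{stuff})$, bounded using Lemma \ref{Boundedness_of_B}'s second conclusion $\|TBf\| \le \|(I-P)f\|$. Concretely: $(Bf, PAf)_\H$, with $Bf \in D(AP)$ and $PAf$ handled via $P$, should reduce after one integration-by-parts-type step (legitimate because $Bf \in D((AP)^*(AP))$) to something $\le \|AP\,Bf\|\cdot\|(I-P)f\| \le \|(I-P)f\|^2 \le \|(I-P)f\|\,\|f\|$, using $\|AP\,Bf\| = \|A B f\| \le \|(I-P)f\|$ from the last part of Lemma \ref{Boundedness_of_B}. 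Thus the main obstacle is the bookkeeping: carefully justifying the "integration by parts" $(Bf, PAf)_\H = (APBf, \cdot)_\H$ on the correct domains and tracking that the surviving factor is $(I-P)f$ on both sides, after which the final inequality is immediate by Cauchy--Schwarz and $\|(I-P)f\| \le \|f\|$.
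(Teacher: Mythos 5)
Your plan circles around and eventually lands on essentially the same ingredients the paper uses, but it meanders through two false starts (both of which you correctly discard because they produce $\|Lf\|$ instead of $\|f\|$) and leaves the actual crux step underspecified. The paper's chain, on $\D$, is short: since $PB=B$ and $\mathcal{R}(P)\subset D(S)$ with $SP=0$, one has $\mathcal{R}(B)\subset D(S)$ and $SB=0$, so $(Bf,Sf)_{\H}=(SBf,f)_{\H}=0$ by symmetry of $S$; then, since $B(\D)\subset D(A)$, antisymmetry of $A$ gives $\mathrm{I}_3[f]=-(Bf,Af)_{\H}=(ABf,f)_{\H}\le\|ABf\|\,\|f\|\le\|(I-P)f\|\,\|f\|$ using the last estimate of Lemma \ref{Boundedness_of_B}; finally extend from $\D$ to $D(L)$ by the core property, using continuity of $B$. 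Your ``integration-by-parts-type step'' is exactly the antisymmetry move $(Bf,Af)_{\H}=-(ABf,f)_{\H}$, but you never state it as such --- you instead gesture at $Bf\in D((AP)^*(AP))$, which is true but not the relevant justification; what you need is $Bf\in D(A)$. You also aim for the stronger bound $\|ABf\|\,\|(I-P)f\|$, i.e.\ $\|(I-P)f\|^2$. That is achievable, but it needs the extra observation
\begin{align*}
(ABf,Pf)_{\H}=(APBf,Pf)_{\H}=(Bf,(AP)^*Pf)_{\H}=-(Bf,PAPf)_{\H}=0
\end{align*}
by (H1) and Lemma \ref{Lm_APi}, which you do not spell out and which is unnecessary: Cauchy--Schwarz against $\|f\|$ already closes the argument. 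In short, your route is correct in substance but longer than needed; once you have $\mathrm{I}_3[f]=(ABf,f)_{\H}$ on $\D$, one Cauchy--Schwarz plus the $\|ABf\|\le\|(I-P)f\|$ bound finishes, and the $BP=0$ and $\|Bf\|\le\tfrac12\|(I-P)f\|$ detours play no role.
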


\begin{proof}
Let first $f \in \D$. Then by (D4), $Lf=Sf - Af$. Moreover, see Lemma \ref{Boundedness_of_B}, we have $P B=B$. Thus by (D5) it holds $\mathcal{R}(B) \subset D(S)$ and $SB=0$ on $\H$. So $\left( Bf, Sf\right)_\H = 0$ since $(S,D(S))$ is symmetric. As seen in the proof of Lemma \ref{Boundedness_of_B}, it holds $B(\D) \subset D(A)$. Hence
\begin{align*}
\mathrm{I}_3[f] =  -\left(B f , Af\right)_\H =  \left(A B f , f\right)_\H \leq \|(I-P)f\| \|f\|
\end{align*}
by the antisymmetry of $(A,D(A))$ and Lemma \ref{Boundedness_of_B}. Thus the claim follows since $\D$ is a core for $(L,D(L))$.
\end{proof}

The following microscopic coercivity condition as defined in \cite[Sec.\,1.3]{DMS10} needs again only to be introduced on the fixed operator core of $L$ in our setting.

\begin{H2} \label{H2} (Microscopic coercivity). There exists $\Lambda_m > 0$ such that
\begin{align*} 
-\left( Sf,f \right)_\H \geq \Lambda_m \, \|(I-P_S)f\|^2,~f \in \D. 
\end{align*}
\end{H2}

\begin{Lm} \label{Lm_2} Assume that (H2) holds. Then we have
\begin{align*}
\mathrm{I}_1[f] \geq \Lambda_m \, \|(I-P)f\|^2
\end{align*}
for all $f \in D(L)$ with $\left(f,1\right)_\H=0$.
\end{Lm}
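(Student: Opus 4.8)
The plan is to first verify the estimate on the operator core $\D$, where the decomposition of $L$ and the antisymmetry of $A$ make the computation immediate, and then to transfer it to all of $D(L)\cap\{(f,1)_\H=0\}$ by the core property (D3) together with continuity of the inner product and of $P$. This mirrors the pattern used in the proof of Lemma \ref{Lm_1}.

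\textbf{Step 1: the core.} Let $f\in\D$. By (D4) we have $Lf=Sf-Af$, and since $(A,D(A))$ is antisymmetric with $\D\subset D(A)$, we get $(Af,f)_\H=-(Af,f)_\H$, hence $(Af,f)_\H=0$. Therefore $\mathrm{I}_1[f]=-(Lf,f)_\H=-(Sf,f)_\H$, and (H2) gives $\mathrm{I}_1[f]\geq \Lambda_m\,\|(I-P_S)f\|^2$. By the definition of $P_S$ in (D5), $(I-P_S)f=(I-P)f-(f,1)_\H\,1$; in particular, for $f\in\D$ with $(f,1)_\H=0$ we already obtain $\mathrm{I}_1[f]\geq\Lambda_m\,\|(I-P)f\|^2$.

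\textbf{Step 2: passing to $D(L)$.} Let $f\in D(L)$ with $(f,1)_\H=0$ be arbitrary. Since $\D$ is a core for $(L,D(L))$, pick $f_n\in\D$ with $f_n\to f$ and $Lf_n\to Lf$ in $\H$. Then $\mathrm{I}_1[f_n]=-(Lf_n,f_n)_\H\to-(Lf,f)_\H=\mathrm{I}_1[f]$. Moreover $(I-P_S)f_n=(I-P)f_n-(f_n,1)_\H\,1\to(I-P)f$, because $P$ is bounded, $f_n\to f$, and $(f_n,1)_\H\to(f,1)_\H=0$; hence $\|(I-P_S)f_n\|^2\to\|(I-P)f\|^2$. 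Passing to the limit in the inequality $\mathrm{I}_1[f_n]\geq\Lambda_m\,\|(I-P_S)f_n\|^2$ from Step 1 yields $\mathrm{I}_1[f]\geq\Lambda_m\,\|(I-P)f\|^2$, as claimed.

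\textbf{Main point to watch.} There is no substantial obstacle here; the only subtlety is that the approximating sequence $f_n$ coming from the core need not satisfy $(f_n,1)_\H=0$, so one cannot invoke the mean-zero version of the $\D$-estimate directly. Instead one keeps $\|(I-P_S)f_n\|^2$ on the right-hand side and uses that $(f_n,1)_\H\to 0$ only in the limit, which is exactly what makes the argument go through.
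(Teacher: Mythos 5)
Your proof is correct and follows essentially the same path as the paper's: compute on the core using the decomposition $L=S-A$ and antisymmetry of $A$, then pass to $D(L)$ via the core property, noting that $\lim_n P_S f_n = Pf$ when $(f,1)_\H=0$. Your explicit remark about $f_n$ not necessarily being mean-zero is a sound clarification of a point the paper handles implicitly.
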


\begin{proof}
For $f \in D(L)$ there exists $f_n \in \D$, $n \in \mathbb{N}$, with $f_n \rightarrow f$ and $Lf_n \rightarrow Lf$ as $n \rightarrow \infty$ with convergence in $\H$ due to (D3). If additionally $\left(f,1\right)_\H=0$, note that $\lim_{n \rightarrow \infty} P_S \, f_n = P f$ in $\H$. Hence
\begin{align*}
I_1[f] = - \lim_{n \rightarrow \infty} \left(Sf_n,f_n\right)_\H \geq \Lambda_m \lim_{n \rightarrow \infty} \|(I-P_S) f_n\|^2 = \Lambda_m \, \|(I-P)f\|^2
\end{align*}
where we have used that $\left(Af_n,f_n\right)_\H=0$ since $(A,D(A))$ is antisymmetric.
\end{proof}

Before introducing (H3), we shall discuss one more lemma first.

\begin{Lm} \label{Lm_before_H3}
The operator $I - P A^2P: \D \rightarrow \H$ has dense range if and only if $\D$ is a core for the operator $(AP)^*AP$ with domain $D((AP)^*AP)$. 
\end{Lm}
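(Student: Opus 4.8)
The plan is to exploit the general description of the operator $C := (AP)^*AP$ provided by von Neumann's theorem, together with the self-adjointness of $C$. Since $(AP, D(AP))$ is closed and densely defined (as noted right after Lemma~\ref{Lm_APi}), $C$ is a nonnegative self-adjoint operator, and $I + C$ is a bijection from $D(C)$ onto $\H$ with bounded inverse. In particular $\mathcal{R}(I+C) = \H$. The core property of $\D$ for $C$ is, by definition of the graph norm, equivalent to the statement that $(C,\D)$ has closure $(C, D(C))$; and because $C$ is nonnegative and self-adjoint, a clean criterion is available: $\D$ is a core for $C$ if and only if $(I+C)(\D)$ is dense in $\H$. (This is the standard fact that a symmetric nonnegative operator whose form-plus-identity has dense range is essentially self-adjoint with the prescribed closure; one proves it by noting that $I+C$ is injective with closed range on $D(C)$, so density of the image of a subspace $\D$ forces $\D$ to be graph-dense in $D(C)$.) So the first step is to reduce the assertion to: $\mathcal{R}\big((I+C)_{|\D}\big)$ is dense $\iff$ $\mathcal{R}\big((I - PA^2P)_{|\D}\big)$ is dense.

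The second step is to identify $(I+C)_{|\D}$ with $(I - PA^2P)_{|\D}$. For $f \in \D$ we have $Pf \in D(A)$ and $AP f \in D(A)$ by (D5) (in the auxiliary-lemma setting this is the hypothesis $P(\D)\subset D(T)$ together with the observation in the proof of Lemma~\ref{Boundedness_of_B} that everything stays in the right domains); hence $Pf \in D(AP)$ and $APf \in D((AP)^*)$, so that $f \in D(C)$ and, using Lemma~\ref{Lm_APi}(i) (the antisymmetric case, which gives $(AP)^*_{|D(A)} = -PA$), one computes
\begin{align*}
C f = (AP)^* (AP) f = (AP)^*(A P f) = -P A (A P f) = -P A^2 P f .
\end{align*}
Therefore $(I+C)f = (I - PA^2P)f$ for every $f \in \D$, i.e.\ the two operators in question literally coincide on $\D$. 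Combined with the first step this already yields the equivalence, since density of the common range is what both sides assert.

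To make the $\iff$ airtight one still has to argue both implications of the core criterion carefully. For the direction ``$\D$ a core for $C$ $\Rightarrow$ $\mathcal{R}((I - PA^2P)_{|\D})$ dense'': given $h \in \H$, pick the unique $g \in D(C)$ with $(I+C)g = h$, approximate $g$ in the graph norm by $g_n \in \D$, and observe $(I+C)g_n = (I - PA^2P)g_n \to (I+C)g = h$ by boundedness of $I+C$ from $(D(C), \|\cdot\|_C)$ to $\H$. For the converse, suppose $\mathcal{R}((I+C)_{|\D})$ is dense; take $g\in D(C)$, set $h=(I+C)g$, choose $f_n\in\D$ with $(I+C)f_n\to h$, and use the boundedness of $(I+C)^{-1}$ together with $\|f_n - g\|_C^2 = \|f_n-g\|^2 + \|C(f_n - g)\|^2 \le (1+\text{const})\,\|(I+C)(f_n-g)\|^2$ — here one uses that on $D(C)$ one has $\|f\|^2 + \|Cf\|^2 \le \|(I+C)f\|^2 + 2\|f\|\|Cf\| \le 2\|(I+C)f\|^2$ by expanding $\|(I+C)f\|^2 = \|f\|^2 + 2(f,Cf) + \|Cf\|^2 \ge \|f\|^2 + \|Cf\|^2$ since $(f,Cf)\ge 0$ — to conclude $f_n \to g$ in graph norm, so $\D$ is a core.

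The main obstacle is the second-to-last sentence of the preceding paragraph, namely extracting the graph-norm estimate $\|f\|_C \le \sqrt{2}\,\|(I+C)f\|$ on $D(C)$ from nonnegativity of $C$; everything else is bookkeeping with domains already handled in the run-up to Lemma~\ref{Boundedness_of_B}, or a direct invocation of Lemma~\ref{Lm_APi}. I expect the author's proof to proceed essentially along these lines, possibly phrasing the core criterion directly in terms of $\overline{(I+C)_{|\D}} = I+C$.
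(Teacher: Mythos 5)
Your proof is correct and takes essentially the same approach as the paper: both identify $(I-PA^2P)_{|\D}$ with $(I+(AP)^*AP)_{|\D}$ and then exploit von Neumann's theorem (bijectivity and bounded inverse of $I+(AP)^*AP$) to translate between density of the range on $\D$ and the graph-norm core property. The paper phrases the hard direction via the closure $K$ of $(I-PA^2P,\D)$, showing $\mathcal{R}(K)=\H$ before applying $(I+(AP)^*AP)^{-1}$, while you extract the graph-norm bound $\|f\|_{C}\le\|(I+C)f\|$ directly from nonnegativity of $C$, but the two mechanisms are equivalent.
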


\begin{proof}
For notational convenience, we set $T=AP$ in this proof. Assume that the operator $I - P A^2P: \D \rightarrow \H$ has dense range. Clearly, the operator $(I - P A^2P,\D)$ is symmetric and semibounded with lower bound $\alpha \geq 1$. Hence $(I - P A^2P,\D)$ is closable with closure $(K,D(K))$ which is also semibounded with lower bound $\alpha \geq 1$. In particular, $(K,D(K))$ is injective, thus $K^{-1}:\mathcal{R}(K) \rightarrow \H$ is also closed. But $K^{-1}$ is also bounded with operator norm less or equal than $1$. Hence $D(K^{-1})=\mathcal{R}(K)$ must be a closed subset of $\H$. Consequently, $\mathcal{R}(K)=\overline{\mathcal{R}(K)}=\H$ by assumption. Moreover, by von Neumann's theorem, we know that the operator 
\begin{align*}
I+T^*T:D(T^*T) \rightarrow \H
\end{align*}
is bijective and extends  $(I - P A^2P,\D)$. Let $f \in D(T^*T)$ and define $g:= (I+T^*T)f$ which is an element of $\mathcal{R}(K)$. Thus there exists $f_n \in \D$, $n \in \mathbb{N}$, such that
\begin{align*}
\lim_{n \rightarrow \infty} (I+T^*T)f_n = \lim_{n \rightarrow \infty} (I- P A^2P)f_n = g = (I+T^*T)f
\end{align*}
Applying the continuous operator $(I+T^*T)^{-1}$ on both sides yields $\lim_{n \rightarrow \infty} f_n = f$. So $\D$ is a core for $(I+T^*T,D(T^*T))$. Thus $\D$ is a core for $(T^*T,D(T^*T))$. The reverse direction is now obvious, just repeat some of the previous arguments.
\end{proof}

For the following macroscopic coercivity assumption as originally introduced in \cite{DMS10}, it does not suffice to verify it only on $\D$, see the proof of Lemma \ref{Lm_3} below. Nevertheless, we give a sufficient criterion for (H3) to hold in the upcoming corollary.

\begin{H3} \label{H3} (Macroscopic coercivity). There exists $\Lambda_M > 0$ such that
\begin{align} \label{eq_inequality_D3} 
\|AP f\|^2 \geq \Lambda_M \|P f \|^2,~f \in D((AP)^*(AP)).
\end{align}
\end{H3}

Now here is a sufficient condition implying (H3). This criterion is later on used in our applications.

\begin{Cor} \label{Cor_H3_equivalent} Assume that the inequality in \eqref{eq_inequality_D3} holds for all $f \in \D$ and assume that the operator $(P A^2P,\D)$ is essentially m-dissipative (or essentially selfadjoint respectively), this is, $(I - P A^2P)(\D)$ is dense in $\H$. Then (H3) is satisfied.
\end{Cor}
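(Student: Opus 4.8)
The plan is to upgrade the inequality in \eqref{eq_inequality_D3}, which is assumed to hold on the core $\D$, to the full domain $D((AP)^*(AP))$ by a density/approximation argument, using the essential m-dissipativity (equivalently, the density of $(I-PA^2P)(\D)$) precisely to guarantee that $\D$ is a core for the relevant closed operator. First I would invoke Lemma \ref{Lm_before_H3}: since $(I-PA^2P)(\D)$ is dense in $\H$, that lemma tells us $\D$ is a core for $(T^*T, D(T^*T))$, where $T := AP$. This is the structural input that makes the approximation work.

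Next, fix $f \in D((AP)^*(AP)) = D(T^*T)$. By the core property just obtained, choose $f_n \in \D$ with $f_n \to f$ and $T^*T f_n \to T^*T f$ in $\H$. The key observation is that control of $T^*T f_n$ gives control of $Tf_n = AP f_n$: indeed $\|Tf_n - Tf_m\|^2 = (T^*T(f_n-f_m), f_n-f_m)_\H \leq \|T^*T(f_n-f_m)\|\,\|f_n-f_m\| \to 0$, so $(Tf_n)$ is Cauchy, and since $T = AP$ is closed (it is, because $A$ is closed and $P$ is bounded — cf. the discussion after Lemma \ref{Lm_APi}, or Lemma \ref{Lm_basics_closed_operators}(iv) applied suitably), we get $Tf_n \to Tf$. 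Also $P$ is bounded, so $Pf_n \to Pf$. Now pass to the limit in the inequality $\|AP f_n\|^2 \geq \Lambda_M \|P f_n\|^2$, which holds for each $f_n \in \D$ by hypothesis, to obtain $\|AP f\|^2 \geq \Lambda_M \|Pf\|^2$. Since $f \in D((AP)^*(AP))$ was arbitrary, this is exactly \eqref{eq_inequality_D3}, i.e.\ (H3) holds.

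The main obstacle — really the only subtle point — is justifying that approximating $f_n \to f$ together with $T^*Tf_n \to T^*Tf$ forces $Tf_n \to Tf$; this is where one needs both the closedness of $T=AP$ and the elementary Cauchy estimate above, and it is the reason the bare inequality on $\D$ does not immediately self-improve (one genuinely needs graph convergence in $T^*T$, not just in $T$, to land inside $D((AP)^*(AP))$ in the limit). Everything else is routine: the symmetry and semiboundedness of $(I-PA^2P,\D)$ with lower bound $\geq 1$ are clear from antisymmetry of $A$ and $P^2=P$, $P^*=P$, and the rest is continuity of the norm and of $P$.
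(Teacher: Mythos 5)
Your proof is correct and follows essentially the same route as the paper: you invoke Lemma \ref{Lm_before_H3} to turn the density hypothesis into the statement that $\D$ is a core for $(AP)^*AP$, and then pass the inequality from $\D$ to $D((AP)^*(AP))$ by graph-norm approximation. The paper's proof takes a small shortcut at the final step, rewriting $\|APf_n\|^2 = \left((AP)^*APf_n, f_n\right)_\H$, which already converges to $\left((AP)^*APf,f\right)_\H = \|APf\|^2$ once $f_n \to f$ and $(AP)^*APf_n \to (AP)^*APf$; your extra Cauchy argument establishing $APf_n \to APf$ via closedness of $AP$ is correct but not strictly necessary.
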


\begin{proof}
First of all, by the conditions in (D) note that the operator $(P A^2P,\D)$ is symmetric and nonpositive definite, in particular, it is dissipative. Moreover, recall that the property of some symmetric, densely defined and nonpositive definite operator to be essentially m-dissipative (i.e., its closure is m-dissipative) is equivalent to essential selfadjointness. And the Lumer-Phillips theorem now implies that $(P A^2P,\D)$ is essentially m-dissipative if and only if $(\alpha I - P A^2P)(\D)$ is dense in $\H$ for one (hence all) $\alpha > 0$, see e.g.~\cite{Gol85}. This clarifies our notations.\\
Now rewrite the left hand side of \eqref{eq_inequality_D3} in the form $\left( (AP)^*AP f, f\right)_\H$ for $f \in \D$ or $f \in D((AP)^*(AP))$. The claim follows by Lemma \ref{Lm_before_H3}.
\end{proof}

Now the final assumption. Compare it again with (H4) in \cite{DMS10}.

\begin{H4} \label{H4} (Boundedness of auxiliary operators). 
The operators $(BS,\D)$ and $(BA(I-P),\D)$ are bounded and there exists constants $N_1 < \infty$ and $N_2 < \infty$ such that
\begin{align*} 
\|BSf\| \leq N_1 \,\|(I-P_1)f\|,~\|BA(I-P)f\| \leq N_2 \,\|(I-P_2)f\|
\end{align*}
holds for all $f\in \D$. Here $P_n$ is either equal to $P$ or $P_S$ for each $n=1,2$.
\end{H4}

Analogously as for Assumption (H3), we shall give sufficient criteria implying (H4) which can be used in applications. The sufficient criteria are contained in Lemma \ref{Pp_suff_H4_part1} and Proposition \ref{Pp_suff_H4_part2} below and are motivated by the considerations and calculations done in \cite[Sec.~1.5, Sec.~2, Sec.~3]{DMS10} and in \cite{DKMS11}.

\begin{Lm} \label{Pp_suff_H4_part1}
Let (H1) holds. Assume that $S(\D) \subset D(A)$ and assume that there exists some $M_1 \in \mathbb{R}$ such that
\begin{align*}
PAS = M_1 \,PA ~~\mbox{ on } \D.
\end{align*}
Then $(BS,\D)$ is bounded and the first inequality in (H4) holds with $N_1= \frac{1}{2} |M_1|$.
\end{Lm}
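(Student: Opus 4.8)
The plan is to observe that the commutation hypothesis forces $BS$ to collapse to a scalar multiple of $B$ on the core $\D$, and then to read off the bound from the norm estimate for $B$ already established in Lemma \ref{Boundedness_of_B}. Concretely, I expect to prove the identity $BSf = M_1 \, Bf$ for every $f \in \D$, after which
\begin{align*}
\|BSf\| = |M_1|\,\|Bf\| \leq \tfrac{1}{2}|M_1|\,\|(I-P)f\|
\end{align*}
is immediate, giving boundedness of $(BS,\D)$ and the first inequality of (H4) with $N_1 = \tfrac{1}{2}|M_1|$ and the choice $P_1 = P$.

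To establish $BSf = M_1 Bf$ I would argue as follows. Fix $f \in \D$. Since $\D \subset D(A)$ and, by hypothesis, $S(\D) \subset D(A)$, Lemma \ref{Lm_APi}(i) gives both $f$ and $Sf$ in $D((AP)^*)$, so that $Bf$ and $BSf$ are both computed by the defining formula $B = (I+(AP)^*AP)^{-1}(AP)^*$ on $D((AP)^*)$. By the antisymmetric case of Lemma \ref{Lm_APi}(i), $(AP)^*$ acts as $-PA$ on $D(A)$; hence $(AP)^*Sf = -PASf$ and $(AP)^*f = -PAf$. Now the hypothesis $PAS = M_1\,PA$ on $\D$ yields $-PASf = -M_1 PAf = M_1\,(AP)^*f$, i.e.\ $(AP)^*Sf = M_1\,(AP)^*f$. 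Applying the bounded operator $(I+(AP)^*AP)^{-1}$ to both sides and using the definition \eqref{definition_of_B} of $B$ gives $BSf = M_1\,Bf$, as desired.

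The only delicate point is the domain bookkeeping: one must be sure that $Sf$ really lies in $D((AP)^*)$ so that the formula for $B$ can be applied to $Sf$, not merely its bounded extension. This is exactly what the hypothesis $S(\D) \subset D(A)$ delivers, via the inclusion $D(A) \subset D((AP)^*)$ from Lemma \ref{Lm_APi}(i). Beyond that, the argument is a short computation, so I do not anticipate any real obstacle; in particular (H1) enters only through its role in validating the estimate $\|Bf\| \leq \tfrac{1}{2}\|(I-P)f\|$ of Lemma \ref{Boundedness_of_B}.
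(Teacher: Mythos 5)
Your proposal is correct and follows exactly the route the paper takes: translate the hypothesis $PAS = M_1\,PA$ into $(AP)^*S = M_1\,(AP)^*$ on $\D$ via Lemma~\ref{Lm_APi}(i), apply $(I+(AP)^*AP)^{-1}$ to obtain $BS = M_1\,B$ on $\D$, and then invoke the estimate $\|Bf\| \leq \tfrac{1}{2}\|(I-P)f\|$ from Lemma~\ref{Boundedness_of_B}. The only difference is that you spell out the domain bookkeeping (that $Sf \in D(A) \subset D((AP)^*)$ so the unextended formula for $B$ applies) which the paper leaves implicit.
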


\begin{proof}
By assumption, note that $(AP)^*S=M_1 \,(AP)^*$ on $\D$, see Lemma \ref{Lm_APi}.  Now the representation of $B$ on $D((AP)^*)$ implies that $BS = M_1 \, B$ on $\D$. So the claim follows by applying Estimate \eqref{norm_of_B} from Lemma \ref{Boundedness_of_B}.
\end{proof}

For the second sufficient criteria it is interesting to note that again essential selfadjointness of $(P A^2P,\D)$ is needed, analogously as in Corollary \ref{Cor_H3_equivalent} before.

\begin{Pp} \label{Pp_suff_H4_part2}
For each $g=(I-PA^2P)f$, $f \in \D$, we have
\begin{align} \label{g_in_DBA*}
g \in D((BA)^*) ~\mbox{ with }~(BA)^*g=-A^2Pf.
\end{align}
Assume that $(PA^2P,\D)$ is essentially m-dissipative (or essentially selfadjoint respectively) and assume that there exists $M_2 < \infty$ such that
\begin{align} \label{eq_boundedness_BA*}
\|(BA)^*g\| \leq M_2 \,\|g\| ~~\mbox{ for all }~ g=(I-PA^2P)f,~f \in \D.
\end{align}
Then the second inequality in (H4) is satisfied with $N_2=M_2$.
\end{Pp}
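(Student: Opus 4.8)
The plan is to prove the two assertions in order. For \eqref{g_in_DBA*} I would compute $(BAf',g)_\H$ directly for $f'\in\D$ by moving $B$ onto the other side of the inner product and identifying $B^*g$ explicitly via von Neumann's theorem, and then integrate by parts using the antisymmetry of $A$. For the second inequality of (H4) I would test the vector $BA(I-P)f$ against the set $\{(I-PA^2P)w:w\in\D\}$, which is dense in $\H$ precisely by the essential m-dissipativity assumption (cf.\ Corollary \ref{Cor_H3_equivalent}); the resulting bilinear estimate, combined with \eqref{g_in_DBA*} and hypothesis \eqref{eq_boundedness_BA*}, then gives the norm bound with $P_2=P$ and $N_2=M_2$.

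For \eqref{g_in_DBA*}: fixing $f\in\D$ and putting $g=(I-PA^2P)f$, one first checks using (D5) that $Pf\in D(A)$ and $APf\in D(A)$, so that $f\in D((AP)^*(AP))$, and Lemma \ref{Lm_APi}(i) (with $T=A$) gives $(AP)^*(AP)f=-PA(APf)=-PA^2Pf$; hence
\begin{align*}
g=(I-PA^2P)f=(I+(AP)^*(AP))f .
\end{align*}
Since $I+(AP)^*(AP)$ maps $D((AP)^*(AP))$ bijectively onto $\H$ with bounded inverse (von Neumann), this yields $(I+(AP)^*(AP))^{-1}g=f$, and therefore, using the identity $B^*=AP(I+(AP)^*(AP))^{-1}$ (see Subsection \ref{section_auxiliary_lemmata}), $B^*g=APf$. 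Consequently, for every $f'\in\D$,
\begin{align*}
(BAf',g)_\H=(Af',B^*g)_\H=(Af',APf)_\H=-(f',A^2Pf)_\H ,
\end{align*}
the last step being the antisymmetry of $A$ together with $APf\in D(A)$. As $\D$ is dense in $\H$, this shows $g\in D((BA)^*)$ with $(BA)^*g=-A^2Pf$. (Here $f$, hence $-A^2Pf$, is determined by $g$, since $I-PA^2P\ge I$ on $\D$ is injective.)

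For the second inequality of (H4): essential m-dissipativity of $(PA^2P,\D)$ means that $\mathcal D_0:=(I-PA^2P)(\D)$ is dense in $\H$. Fixing $f\in\D$, let $w\in\D$ and set $h=(I-PA^2P)w\in\mathcal D_0$. Exactly as above, $(I+(AP)^*(AP))^{-1}h=w$, so $B^*h=APw$, and $Pw,APw\in D(A)$ by (D5). Writing $A(I-P)f=Af-APf$ and integrating by parts twice via the antisymmetry of $A$, one obtains
\begin{align*}
(BA(I-P)f,h)_\H=(A(I-P)f,APw)_\H=(Af,APw)_\H-(APf,APw)_\H=-((I-P)f,A^2Pw)_\H .
\end{align*}
By Cauchy--Schwarz, \eqref{g_in_DBA*} (which gives $\|A^2Pw\|=\|(BA)^*h\|$) and \eqref{eq_boundedness_BA*},
\begin{align*}
\big|(BA(I-P)f,h)_\H\big|\le\|(I-P)f\|\,\|A^2Pw\|\le M_2\,\|(I-P)f\|\,\|h\| .
\end{align*}
Since $\mathcal D_0$ is dense in $\H$, taking the supremum over $h\in\mathcal D_0$ with $\|h\|\le 1$ yields $\|BA(I-P)f\|\le M_2\|(I-P)f\|$, i.e.\ the second inequality of (H4) holds with $P_2=P$ and $N_2=M_2$.

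I expect the only delicate point to be the domain bookkeeping in the first step: one must verify that the purely \emph{algebraic} operator $I-PA^2P$ on the core $\D$ really coincides with the \emph{functional-analytic} operator $I+(AP)^*(AP)$ supplied by von Neumann's theorem, so that its bounded inverse may legitimately be applied to the vectors $(I-PA^2P)w$, $w\in\D$; this rests entirely on the compatibility relations $P(\D)\subset D(A)$, $AP(\D)\subset D(A)$ in (D5) and on Lemma \ref{Lm_APi}. Once that identification is secured, the passage from the bilinear estimate to the norm bound is routine and uses only that essential m-dissipativity of $(PA^2P,\D)$ renders $(I-PA^2P)(\D)$ dense in $\H$, in the same spirit as Lemma \ref{Lm_before_H3}.
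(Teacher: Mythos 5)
Your proof is correct, and while the first half coincides with the paper's argument, the second half takes a genuinely different and somewhat more elementary route, so a brief comparison is warranted.

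For \eqref{g_in_DBA*}, you and the paper use the same essential observation: since $f\in\D$ implies $Pf\in D(A)$ and $APf\in D(A)$ by (D5), Lemma \ref{Lm_APi}(i) identifies $(AP)^*(AP)f=-PA^2Pf$, so $g=(I+(AP)^*AP)f$ and hence $B^*g=AP(I+(AP)^*AP)^{-1}g=APf$. The paper arrives at $B^*g=APf$ via the dual pairing against $h\in D((AP)^*)$ and the self-adjointness of $(I+(AP)^*AP)^{-1}$; you simply apply the bounded inverse directly. Same idea, slightly different bookkeeping. One small imprecision: you write that because $(BAf',g)_\H=-(f',A^2Pf)_\H$ holds for all $f'\in\D$ and $\D$ is dense in $\H$, it follows that $g\in D((BA)^*)$. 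Strictly speaking, membership in $D((BA)^*)$ requires the identity for all $f'\in D(BA)=D(A)$, and density of $\D$ in $\H$ does not by itself bridge the gap; however, your chain $(BAf',g)_\H=(Af',B^*g)_\H=(Af',APf)_\H=-(f',A^2Pf)_\H$ goes through verbatim for every $f'\in D(A)$ (it only uses that $B^*$ is bounded and $APf\in D(A)$), so the conclusion is correct and it is only the phrasing of the density appeal that is misplaced.

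For the norm bound on $BA(I-P)$, you diverge from the paper. The paper argues operator-theoretically: the closed operator $(BA)^*$ obeys the bound \eqref{eq_boundedness_BA*} on the dense set $(I-PA^2P)(\D)$, so by closedness its domain is all of $\H$; the closed graph theorem then makes $(BA)^*$ bounded, hence $\overline{BA}=(BA)^{**}$ is a bounded operator with $\|\overline{BA}\|=\|(BA)^*\|\le M_2$, and the desired inequality follows because $(I-P)f\in D(A)=D(BA)$. You instead do a direct duality estimate: pairing $BA(I-P)f$ against $h=(I-PA^2P)w$, integrating by parts twice via antisymmetry of $A$ (justified by (D5)), and invoking Cauchy--Schwarz together with \eqref{g_in_DBA*} and \eqref{eq_boundedness_BA*} gives $|(BA(I-P)f,h)_\H|\le M_2\|(I-P)f\|\,\|h\|$; the supremum over the dense set $(I-PA^2P)(\D)$ then yields the bound. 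Your route avoids the closed graph theorem and the detour through $(BA)^{**}$ entirely; the paper's route is a touch more machinery-heavy but has the side benefit of establishing that $\overline{BA}$ is globally bounded on $\H$, a fact which is not needed for the proposition but is conceptually clean. Both arguments are valid and reach the stated conclusion with $N_2=M_2$ and $P_2=P$.
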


\begin{proof}
By Lemma \ref{Lm_basics_closed_operators}, we have
\begin{align*}
((BA)^*,D((BA)^*))=(A^*B^*,D(A^*B^*)).
\end{align*}
So let $g \in \H$ be of the form $g=(I-PA^2P)f$ for some $f \in \D$ and choose $h \in D((AP)^*)$. By the representation of $B$ on $D((AP)^*)$, the fact that the adjoint of $(I+(AP)^*AP)^{-1}$ is equal to $(I+(AP)^*AP)^{-1}$ and since $f \in D(AP)$ we get
\begin{align*}
\left(h,B^*g\right)_\H=\left(Bh,g\right)_\H=\left((AP)^*h,f\right)_\H=\left(h,APf\right)_\H.
\end{align*}
Thus $B^*g=APf \in D(A) \subset D(A^*)$. Hence $A^*B^*g=-A^2Pf$ by Lemma \ref{Lm_APi}. So \eqref{g_in_DBA*} is shown.
Now by \eqref{eq_boundedness_BA*}, for the closed operator $((BA)^*,D((BA)^*))$ we must have that the closure of $(I-PA^2P)(\D)$ in $\H$ is contained in $D((BA)^*)$. But $(I-PA^2P)(\D)$ is dense in $\H$ by assumption. Hence $(BA)^*$ is already a bounded operator on $\H$ by the closed graph theorem. Finally, by Lemma \ref{Lm_basics_closed_operators}, the operator $(BA,D(A))$ is closable, its closure $\overline{BA}$ is a bounded operator on $\H$ with $\|\overline{BA}\|=\|(BA)^*\|$. In particular, 
\begin{align*} 
&\|BA(I-P)f\| \leq M_2 \, \|(I-P)f\|~~ \mbox{ for all } f \in \D.
\end{align*}
Hence the proof is finished.
\end{proof}

We remark that Inequality \eqref{eq_boundedness_BA*} can later on be verified in our application with the help of some a priori estimate for a suitable elliptic equation obtained by Dolbeault, Mouhot and Schmeiser, see Section \ref{Hypocoercivity_section_Spherical_velocity_Langevin}. But now, one more lemma.

\begin{Lm} \label{Lm_3} Assume that (H3) and (H4) holds. Then
\begin{align*}
\mathrm{I}_2[f] \leq - \frac{\Lambda_M}{1+\Lambda_M} \|P f\|^2 + (N_1+N_2) \,\| (I-P)f\| \|P f\| 
\end{align*}
for all $f \in D(L)$ with $\left(f,1\right)_\H=0$.
\end{Lm}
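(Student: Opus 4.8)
The plan is to prove the estimate first for $f$ in the core $\D$ and then pass to $f \in D(L)$ with $(f,1)_\H = 0$ by density. For $f \in \D$ I would start from $\mathrm{I}_2[f] = (BLf, f)_\H$, use $L_{|\D} = S - A$ from (D4) and split $Af = APf + A(I-P)f$ (legitimate since $\D \subset D(A)$ and $P(\D) \subset D(A)$ by (D4), (D5)), obtaining
\begin{align*}
\mathrm{I}_2[f] = (BSf, f)_\H - (BAPf, f)_\H - \big(BA(I-P)f, f\big)_\H .
\end{align*}
In each of the three terms I would use $PB = B$ from Lemma \ref{Boundedness_of_B}, i.e.\ $\mathcal{R}(B) \subset \mathcal{R}(P)$, to move $P$ onto the right-hand entry. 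For the first and third term this already suffices: $BSf, BA(I-P)f \in \mathcal{R}(P)$, so Cauchy--Schwarz together with the bounds of (H4) gives $(BSf, f)_\H = (BSf, Pf)_\H \le N_1\|(I-P_1)f\|\,\|Pf\|$ and $-\big(BA(I-P)f, f\big)_\H \le N_2\|(I-P_2)f\|\,\|Pf\|$.

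The term $-(BAPf, f)_\H$ is the heart of the matter, and I expect the domain bookkeeping here to be the main obstacle. For $f \in \D$ one checks that $f$ and $Pf$ lie in $D((AP)^*(AP))$ --- using $P(\D) \subset D(A)$ and $AP(\D) \subset D(A) \subset D((AP)^*)$, the latter inclusion from Lemma \ref{Lm_APi}(i) --- so that, writing $C := (AP)^*(AP)$ (non-negative and self-adjoint by von Neumann's theorem), one has $BAPf = (I+C)^{-1}(AP)^*(AP)f = (I+C)^{-1}Cf$. Now $C$ annihilates $\mathcal{R}(I-P)$ and commutes with $P$ (by Lemma \ref{Lm_APi}(ii)), hence restricts to a non-negative self-adjoint operator on the Hilbert space $\mathcal{R}(P)$ whose spectrum, by (H3), lies in $[\Lambda_M, \infty)$; this is the only place where (H3) enters non-trivially. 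Applying the functional calculus to the increasing function $t \mapsto t/(1+t)$ yields $(I+C)^{-1}C \ge \tfrac{\Lambda_M}{1+\Lambda_M}$ on $\mathcal{R}(P)$, and since $Cf = C(Pf)$ I conclude $(BAPf, f)_\H = (BAPf, Pf)_\H = \big((I+C)^{-1}C(Pf), Pf\big)_\H \ge \tfrac{\Lambda_M}{1+\Lambda_M}\|Pf\|^2$.

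Combining the three estimates gives, for all $f \in \D$,
\begin{align*}
\mathrm{I}_2[f] \le -\frac{\Lambda_M}{1+\Lambda_M}\|Pf\|^2 + \big(N_1\|(I-P_1)f\| + N_2\|(I-P_2)f\|\big)\|Pf\| .
\end{align*}
Finally, for $f \in D(L)$ with $(f,1)_\H = 0$, I would choose $f_n \in \D$ with $f_n \to f$ and $Lf_n \to Lf$ (possible by (D3)); since $B$ is bounded, $\mathrm{I}_2[f_n] = (BLf_n, f_n)_\H \to (BLf, f)_\H = \mathrm{I}_2[f]$, while $\|Pf_n\| \to \|Pf\|$ and $\|(I-P_j)f_n\| \to \|(I-P_j)f\|$. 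Because $(f,1)_\H = 0$ forces $P_S f = Pf$, hence $(I-P_S)f = (I-P)f$, passing to the limit allows replacing both $P_1$ and $P_2$ by $P$, which gives the asserted inequality.
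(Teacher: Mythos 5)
Your proof is correct and follows essentially the same path as the paper: the same decomposition of $\mathrm{I}_2[f]$ using $L_{|\D}=S-A$ and $Af=APf+A(I-P)f$, the same use of $PB=B$ together with Cauchy--Schwarz and (H4) for the two auxiliary terms, and the same density argument at the end with the observation that $(f,1)_\H=0$ lets one replace $P_S$ by $P$. The one place you phrase things differently is the lower bound for $(BAPf,Pf)_\H$: you invoke the spectral theorem for the self-adjoint restriction of $C=(AP)^*AP$ to $\mathcal{R}(P)$ and apply functional calculus with $\phi(t)=t/(1+t)$, whereas the paper writes $BAPf=Pf-(I+C)^{-1}Pf$ and establishes the operator-norm bound $\|(I+C)^{-1}h\|\le(1+\Lambda_M)^{-1}\|h\|$ for $h\in\mathcal{R}(P)$ directly from the coercivity $((I+C)g,g)_\H\ge(1+\Lambda_M)\|g\|^2$ given by (H3). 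These are the same estimate in different clothing; the paper's version is a shade more elementary since it avoids verifying that $\mathcal{R}(P)$ is a reducing subspace for $C$ (you assert this via Lemma \ref{Lm_APi}(ii), which gives $PC=C$ on $D(C)$; one should also note $CP=C$ on $D(C)$, from $AP\circ P=AP$, and $P(D(C))\subset D(C)$, to justify the reduction), and gets by with von Neumann's theorem plus a Cauchy--Schwarz estimate.
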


\begin{proof}
Let first $f \in \D$. Then
\begin{align} \label{eq_I2_on_D}
\mathrm{I}_2[f] &= \left( B S f, f\right)_\H - \left( B A (I-P) f, f\right)_\H - \left( B A P f, f\right)_\H \nonumber \\
&=\left((B S -  B A (I-P))f, P f\right)_\H - \left( B A P f, P f\right)_\H. 
\end{align}
By (D5) we have $A P f \in D(A) \subset D((AP)^*)$. Hence by the representation of $B$ on $D((AP)^*))$, we obtain
\begin{align} \label{Eq_BAPi}
BA P f = (I+(AP)^*A P)^{-1}(AP)^*AP f &= P f - (I+(AP)^*A P)^{-1} \,P f
\end{align}
Moreover, it is easy to see that $I+(AP)^*A P$ maps $D((AP)^*A P) \cap \mathcal{R}(P)$ bijectively to $\mathcal{R}(P)$.
Now let $g \in D((AP)^*A P)$ such that $P g=g$. Then (H3) implies
\begin{align*}
\left( (I+(AP)^*A P) g, g \right)_\H = \| g \|^2 + \|A P g\|^2 \geq (1+\Lambda_M) \, \|g\|^2
\end{align*}
Consequently,
\begin{align*}
\| (I+(AP)^*A P)^{-1} h\| \leq \frac{1}{1 + \Lambda_M} \|h\| ~\mbox{ for all } h \in \mathcal{R}(P).
\end{align*}
Hence Equation \eqref{Eq_BAPi} gives
\begin{align*}
\left(BA P f, P f \right)_\H \geq \frac{\Lambda_M}{1+\Lambda_M} \|P f\|^2 ~\mbox{ for all } f \in \D.
\end{align*}
By using additionally the representation \eqref{eq_I2_on_D} and (H4), we obtain
\begin{align*}
\mathrm{I}_2[f] \leq - \frac{\Lambda_M}{1+\Lambda_M} \|P f\|^2 + N_1 \| (I-P_1)f\| \|P f\| + N_2 \| (I-P_2)f\| \|P f\|
\end{align*}
for all $f \in \D$. And since $\D$ is a core for $D(L)$ the last inequality carries over to all elements $f \in D(L)$, in particular, is satisfied for all $f \in D(L)$ with $\left(f,1\right)_\H=0$. Thus the claim follows since $P_n f=P f$ for all such $f$ and each $n=1,2$.
\end{proof}

\begin{Rm}
The previous proof shows that instead of (H4) we may (for instance) assume that
\begin{align*}
\left(B (S -  A (I-P))f, P f\right)_\H \leq N\, \| (I-P) f \| \| P f \|
\end{align*}
for some constant $N \in [0,\infty)$ and all $f \in \D$. Nevertheless, (H4) as formulated above seems to be more suitable for the applications.
\end{Rm}

Now, we arrive at the final hypocoercivity theorem. It can now be proven, almost word by word, completely analogous as the original statement in the framework of \cite{DMS10}, see \cite[Theo.~2]{DMS10}. Just for completeness, we include this proof here. Notice the additional assumption (D) in our setting.

\begin{Thm} \label{Thm_Hypocoercivity}
Assume that (D) and (H1)-(H4) holds. Then there exists strictly positive constants $\kappa_1 < \infty$ and $\kappa_2< \infty$ which are explicitly computable in terms of $\Lambda_m,~\Lambda_M,~N_1$ and $N_2$ such that for all $g \in \H$ we have
\begin{align*}
\left\|T(t)g - \left(g,1\right)_\H\right\| \leq \kappa_1 e^{-\kappa_2 \,t}  \left\|g - \left(g,1\right)_\H\right\| ~\mbox{ for all } t \geq 0.
\end{align*}
\end{Thm}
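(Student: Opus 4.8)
The plan is to follow the Dolbeault--Mouhot--Schmeiser strategy, but carefully on the core $\D$ first and then pass to $D(L)$ using the core property (D3). Fix $g \in D(L)$, let $f(t) = u(t) - (g,1)_\H$ be the classical solution from Definition \ref{df_solution_Cauchy problem}, so that $(f(t),1)_\H = 0$ for all $t \geq 0$ by \eqref{scalar_product_gt}, and recall $\mathrm{H}_\varepsilon[f(t)]$ and $\mathrm{D}_\varepsilon[t] = -\frac{d}{dt}\mathrm{H}_\varepsilon[f(t)] = \mathrm{I}_1[f(t)] - \varepsilon\,\mathrm{I}_2[f(t)] - \varepsilon\,\mathrm{I}_3[f(t)]$. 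The heart of the argument is to produce, for $\varepsilon$ small enough, a constant $\lambda_\varepsilon > 0$ with
\begin{align} \label{eq_entropy_decay}
\mathrm{D}_\varepsilon[t] \geq \lambda_\varepsilon \, \|f(t)\|^2 \quad \text{for all } t \geq 0.
\end{align}
Given \eqref{eq_entropy_decay}, combine it with the norm equivalence \eqref{equivalence_H_norm}, namely $\frac{1-\varepsilon}{2}\|f(t)\|^2 \leq \mathrm{H}_\varepsilon[f(t)] \leq \frac{1+\varepsilon}{2}\|f(t)\|^2$, to get $\frac{d}{dt}\mathrm{H}_\varepsilon[f(t)] = -\mathrm{D}_\varepsilon[t] \leq -\frac{2\lambda_\varepsilon}{1+\varepsilon}\mathrm{H}_\varepsilon[f(t)]$; Gronwall then yields $\mathrm{H}_\varepsilon[f(t)] \leq e^{-\frac{2\lambda_\varepsilon}{1+\varepsilon}t}\mathrm{H}_\varepsilon[f(0)]$, and translating back through \eqref{equivalence_H_norm} gives $\|f(t)\|^2 \leq \frac{1+\varepsilon}{1-\varepsilon} e^{-\frac{2\lambda_\varepsilon}{1+\varepsilon}t}\|f(0)\|^2$. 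Since $f(t) = T(t)g - (g,1)_\H$ and $f(0) = g - (g,1)_\H$, this is the claimed estimate with $\kappa_1 = \sqrt{\tfrac{1+\varepsilon}{1-\varepsilon}}$ and $\kappa_2 = \tfrac{\lambda_\varepsilon}{1+\varepsilon}$; the density of $D(L)$ in $\H$ together with the contraction property of $T(t)$ then extends it to all $g \in \H$.

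To establish \eqref{eq_entropy_decay} I would estimate the three pieces using the lemmas already proved, all applied with $f = f(t) \in D(L)$, $(f,1)_\H = 0$. First, Lemma \ref{Lm_2} (via (H2)) gives $\mathrm{I}_1[f] \geq \Lambda_m \|(I-P)f\|^2$. Second, Lemma \ref{Lm_3} (via (H3), (H4)) gives $\mathrm{I}_2[f] \leq -\frac{\Lambda_M}{1+\Lambda_M}\|Pf\|^2 + (N_1+N_2)\|(I-P)f\|\,\|Pf\|$, so $-\varepsilon\,\mathrm{I}_2[f] \geq \varepsilon\frac{\Lambda_M}{1+\Lambda_M}\|Pf\|^2 - \varepsilon(N_1+N_2)\|(I-P)f\|\,\|Pf\|$. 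Third, Lemma \ref{Lm_1} (via (H1)) gives $\mathrm{I}_3[f] \leq \|(I-P)f\|\,\|f\|$; here I would use $(f,1)_\H=0$ and $P_S f = Pf$ to write $\|f\|^2 = \|Pf\|^2 + \|(I-P)f\|^2$, so that $\|(I-P)f\|\,\|f\| \leq \|(I-P)f\|^2 + \|(I-P)f\|\,\|Pf\|$, hence $-\varepsilon\,\mathrm{I}_3[f] \geq -\varepsilon\|(I-P)f\|^2 - \varepsilon\|(I-P)f\|\,\|Pf\|$. Adding the three, and writing $X = \|(I-P)f\|$, $Y = \|Pf\|$, we obtain
\begin{align*}
\mathrm{D}_\varepsilon[t] \geq (\Lambda_m - \varepsilon)\,X^2 + \varepsilon\,\tfrac{\Lambda_M}{1+\Lambda_M}\,Y^2 - \varepsilon\,(1 + N_1 + N_2)\,XY.
\end{align*}

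The remaining step is the elementary but crucial observation that this quadratic form in $(X,Y)$ is positive definite for $\varepsilon$ sufficiently small: the coefficient of $X^2$ tends to $\Lambda_m > 0$, the coefficient of $Y^2$ is of order $\varepsilon$, and the cross term is also of order $\varepsilon$, so the discriminant condition $4(\Lambda_m - \varepsilon)\cdot\varepsilon\tfrac{\Lambda_M}{1+\Lambda_M} > \varepsilon^2(1+N_1+N_2)^2$ holds for all small $\varepsilon$; one then picks a convenient explicit value of $\varepsilon$ (depending only on $\Lambda_m, \Lambda_M, N_1, N_2$) and reads off $\lambda_\varepsilon > 0$ with $\mathrm{D}_\varepsilon[t] \geq \lambda_\varepsilon(X^2 + Y^2) = \lambda_\varepsilon\|f(t)\|^2$, using once more $\|f(t)\|^2 = X^2 + Y^2$. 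I expect the only genuinely delicate point to be bookkeeping: making sure every inequality invoked is legitimately available on $D(L)$ (not merely $\D$), which is exactly why Lemmas \ref{Lm_1}, \ref{Lm_2}, \ref{Lm_3} were each stated for $f \in D(L)$ with $(f,1)_\H = 0$ rather than for $f \in \D$; and tracking that $P_1 f = P_2 f = P_S f = Pf$ under the constraint $(f,1)_\H = 0$, so that the $N_1, N_2$ terms genuinely contribute the cross term $XY$ and nothing worse. The passage from $g \in D(L)$ to general $g \in \H$ at the very end is routine by density and contractivity.
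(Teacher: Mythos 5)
Your proposal is correct and follows essentially the same route as the paper's proof: estimate $\mathrm{I}_1,\mathrm{I}_2,\mathrm{I}_3$ on $D(L)$ via Lemmas \ref{Lm_2}, \ref{Lm_3}, \ref{Lm_1}, reduce to a quadratic form in $\|(I-P)f_t\|$ and $\|Pf_t\|$ that is positive definite for small $\varepsilon$, then apply \eqref{equivalence_H_norm} and Gronwall, and pass to general $g\in\H$ by density. The only cosmetic differences are that you split $\|f\|\leq\|(I-P)f\|+\|Pf\|$ before absorbing the cross term and argue via a discriminant condition, whereas the paper applies a $\delta$-weighted Young inequality directly and then chooses $\delta$ and $\varepsilon$; also, the final density step does not actually need contractivity of $T(t)$, only boundedness of $T(t)$ for each fixed $t$, which is what the data (D2) guarantees.
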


\begin{proof}
Let first $g \in D(L)$ and let $\mathrm{D}_\varepsilon[\cdot]$ and $(f(t))_{t \geq 0}$ be as in Definition \ref{Df_entropy_dissipation_functional}. To show: There exists a positive constant $\kappa > 0$ and some suitable $0 < \varepsilon <1$ (both independent of $g$) such that 
\begin{align} \label{to_show_coercivity_of_D}
\mathrm{D}_\varepsilon [t] \geq \kappa \,\|f(t)\|^2
\end{align}
holds for all $t \geq 0$. Indeed, assume this to be true. By using \eqref{equivalence_H_norm} we obtain
\begin{align*}
\frac{d}{dt} \mathrm{H}_\varepsilon[f(t)] \leq -\frac{2 \kappa}{1 + \varepsilon} \,\mathrm{H}_\varepsilon[f(t)] ~~\mbox{ for all } t \geq 0.
\end{align*}
Gronwall's lemma and \eqref{equivalence_H_norm} then implies the claim for $g \in D(L)$ with $\kappa_1=\sqrt{\frac{1+\varepsilon}{1-\varepsilon}}$ and $\kappa_2 =\frac{\kappa}{1+\varepsilon}$. So let us verify the coercivity property \eqref{to_show_coercivity_of_D}. For notational convenience, we set $f_t:=f(t)$ for $t \geq 0$. Lemma \ref{Lm_1}, Lemma \ref{Lm_2} and Lemma \ref{Lm_3} in combination with Identity \eqref{scalar_product_gt} imply
\begin{align*}
\mathrm{D}_\varepsilon[t] &\geq \Lambda_m \|(I-P)f_t\|^2 + \varepsilon \frac{\Lambda_M}{1+\Lambda_M} \|P f_t\|^2 - \varepsilon (1+N_3)\, \| (I-P)f_t\| \| f_t\| \\
& \geq \left( \Lambda_m - \varepsilon (1+N_3) \frac{1+\delta^2}{2 \delta} \right) \|(I-P)f_t\|^2 \\ &~~~~~+ \varepsilon \left(  \frac{\Lambda_M}{1+\Lambda_M} - (1+N_3) \frac{\delta}{2} \right) \|P f_t\|^2
\end{align*}
where $N_3=N_1+N_2$ and $\delta > 0$ is arbitrary. By choosing first $\delta$ and then $\varepsilon$ small enough, \eqref{to_show_coercivity_of_D} follows. So the statement of the theorem is shown in case $g \in D(L)$. Now note that the rate of convergence in terms of $\kappa_1$ and $\kappa_2$ is independent of $g \in D(L)$. Hence use the density of $D(L)$ in $\H$ to finish the proof.
\end{proof}
\section{Hypocoercivity and exponential convergence to equilibrium of a spherical velocity Langevin process} \label{Hypocoercivity_section_Spherical_velocity_Langevin}

In the following, let $d \in \mathbb{R}^d$ be fixed with $d \geq 2$ and let $\sigma$ be a finite constant satisfying $\sigma \geq 0$. In this section we study the ergodic behavior of the spherical velocity Langevin process \eqref{Fiber_Model_Intro} with state space $\mathbb{M}=\mathbb{R}^d \times \mathbb{S}$ whose associated  Kolmogorov backward operator writes in the form
\begin{align} \label{Kolmogorov_operator_spherical_velocity_Langevin}
L= \omega \cdot \nabla_x - \text{grad}_{\mathbb{S}} \Phi \cdot \nabla_\omega + \frac{1}{2} \sigma^2 \, \Delta_{\mathbb{S}}\,~\mbox{ with }\,~\Phi(x,\omega)= \frac{1}{d-1} \,\nabla_x V(x) \cdot \omega.
\end{align}
For notations we refer to the introduction. As mentioned before, the spherical velocity Langevin process is in some sense the analogue of the classical Langevin process moving with velocity of euclidean norm equal to $1$. Moreover, exactly this equation arises in industrial applications as the so-called fiber lay-down process in modeling (virtual) nonwoven webs, see Section \ref{section_Introduction} for further motivation. For illustration and visualization consider Figure \ref{figure_nonwovenweb} below, where two different fiber webs are generated with the program \glqq SURRO\grqq~that has been developed at the Fraunhofer ITWM in Kaiserslautern (department \glqq Transport Processes\grqq). Therein, the two-dimensional version of the spherical velocity Langevin process (including a moving conveyor belt) is implemented. For more details on all these industrial applications, we refer to \cite{GKMW07}, \cite{KMW09}, \cite{HM05}, \cite{MW11} and \cite{MW06}.

\begin{figure}[hbp] 
\subfigure{\includegraphics[scale=0.53]{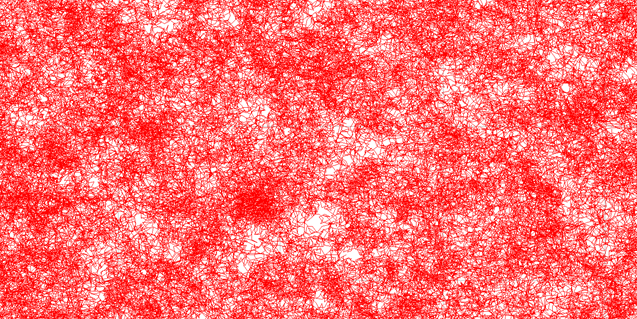}} \hspace{0,36cm} 
\subfigure{\includegraphics[scale=0.53]{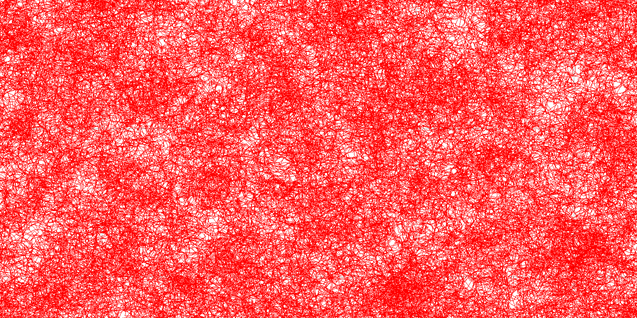}} %
\caption{Industrial usage of the spherical velocity Langevin equation in simulating nonwoven webs} \label{figure_nonwovenweb}
\end{figure}

Of particular interest is the long-time behavior of the spherical velocity Langevin process, especially its exponential decay to equilibrium (see again Section \ref{section_Introduction}). Thus we make use of the hypocoercivity strategy developed in the previous section. Let us remark again that in \cite{GKMS12}, we have already applied the original hypocoercivity strategy from \cite{DMS10} in an algebraic way and have not yet discussed any domain issues. Hence a rigorous elaboration of the calculations has been left open, see \cite[Rem.~6.1]{GKMS12} and \cite[Rem.~6.5]{GKMS12}. In the following, we apply the methods developed in Section \ref{Section_Extension_Hypocoercivity} in a mathematical precise way and finally end up with the desired hypocoercivity theorem. 

Therefore, we recall first some useful formulas and introduce basic notations and our necessary geometric language. Afterwards, we start constructing the setup in Section \ref{Sec_the_data_(D)}, this is, verify and prove the conditions (D1)-(D7). Here the construction of the strongly continuous semigroup associated to the Kolmogorov operator $L$ from \eqref{Kolmogorov_operator_spherical_velocity_Langevin} uses powerful functional analytic tools such as the hypoellipticity of $L$ as well as Kato pertubation and is done in Section \ref{Section_Construction_Semigroup} separately. We mention that we do not restrict on the case $V \in C^\infty(\mathbb{R}^d)$ and can even solve the abstract Cauchy problem for $L$ under that assumption that $V$ is just locally Lipschitz continuous and bounded from below. Finally, see Section \ref{Section_(H1)_(H4)}, we prove the desired conditions (H1)-(H4) completing the hypocoercivity program of Section \ref{Section_Extension_Hypocoercivity}.

\subsection{Some more notations and formulas} \label{Section_Formulas_Notations}
First some notations. Let $\mathbb{X}$ be a smooth manifold. $\mathbb{X}$ is always equipped with its natural Borel-sigma algebra $\mathcal{B}(\mathbb{X})$ and induced Riemannian volume measure in case measurability issues are considered. $C^n(\mathbb{X})$, $n \in \mathbb{N} \cup \{\infty\}$, denotes the set of all $n$-times continuously differentiable functions $f:\mathbb{X} \rightarrow \mathbb{R}$. The index $c$ denotes compact support. The support of such an $f$ is denoted by $\text{supp}(f)$. As usual $H^{m,p}(\mathbb{R}^d)$, $m \in \mathbb{N}$, $p \in [1,\infty]$, denotes the Soboloev space of all $m$-times weakly differentiable functions with $L^p(\mathbb{R}^d,\mathrm{d}x)$-integrable weak derivatives (including the function itself). The notation $f \in Z_{\text{loc}}$, $Z$ some Sobolev or Lebesgue space, is understood in the sense that $f$ is assumed to be measurable and is locally (on relative compact open subsets of the underlying state space) an element from $Z$. For $x \in \mathbb{R}^d$, the open ball with radius $r>0$ (with respect to $| \cdot |_{\text{euc}}$) around $x$ is denoted by $B_r(x)$. Additionally to the notation from the Introduction, we denote partial derivatives in euclidean space by $\partial_{x_i}$ for $i=1,\ldots,d$ and $\Delta_x$ (or $\Delta$) is the Laplace operator in $\mathbb{R}^d$. 

We need some more notation and geometric language, similar as in \cite{GKMS12}. So consider the closed regular submanifold $\mathbb{M}=\mathbb{R}^d \times \mathbb{S}$ of $\mathbb{R}^{2d}$ introduced previously. Let $X:\mathbb{M} \rightarrow \mathbb{R}^{2d}$ be measurable and let $X$ be tangential to $\mathbb{M}$ almost everywhere, i.e., it holds 
\begin{align*}
X(x,\omega) \cdot \begin{pmatrix} 0 \\ \omega \end{pmatrix} =0~~\mbox{ for a.e.}~(x,\omega) \in \mathbb{M}.
\end{align*}
Hence $X(x,\omega)$ is an element from the (algebraic) tangent space at the point $(x,\omega)$ for a.e.~$(x,\omega) \in \mathbb{M}$, that is, $X(x,\omega)$ induces a derivation of the form
\begin{align} \label{Df_vector_field_almost_everywhere}
X(x,\omega)(f):=Xf(x,\omega):=X(x,\omega) \cdot \nabla_{(x,\omega)} \widetilde{f}
\end{align}
where $f \in C^\infty(\mathbb{M})$ and $\widetilde{f} \in C^\infty(\mathbb{R}^{2d})$ is arbitrary chosen such that $\widetilde{f}$ extends $f$ in some open neighbourhood (in $\mathbb{M}$) of $(x,\omega)$. This is indeed well-defined. Here $\nabla_{(x,\omega)}$ is the usual gradient in $\mathbb{R}^{2d}$. Moreover, recall that for any $f \in C^\infty(\mathbb{M})$ one can construct an $\widetilde{f} \in C^\infty(\mathbb{R}^{2d})$ which extends $f$ on the whole of $\mathbb{M}$. Altogether, for every $f \in C^\infty(\mathbb{M})$ we conclude that the function $Xf:\mathbb{M} \rightarrow \mathbb{R}$, defined by $(x,\omega) \mapsto Xf(x,\omega)$, is measurable and vanishes on the complement of $\text{supp}(f)$. $X$ is also called a vector field on $\mathbb{M}$ and by abuse of notation we canonically identify $X$ with $X \cdot \nabla_{(x,\,\omega)}$, in notation $X \equiv X \cdot \nabla_{(x,\,\omega)}$. Then, of course,
\begin{align*}
X \cdot \nabla_{(x,\,\omega)} f,~f \in C^\infty(\mathbb{M}),
\end{align*} 
is understood as introduced in \eqref{Df_vector_field_almost_everywhere}. The previous considerations are valid for any closed regular submanifold embedded in euclidean space. Next, the spherical gradient of some $F \in C^\infty(\mathbb{S})$ is denoted by $\text{grad}_{\mathbb{S}}\,F:\mathbb{S} \rightarrow \mathbb{R}^d$ and  is given by
\begin{align*}
\text{grad}_{\mathbb{S}}\,F(\omega)= (I-\omega \otimes \omega) \,\nabla_\omega \widetilde{F} (\omega),~\omega \in \mathbb{S},
\end{align*}
where $\widetilde{F} \in C^\infty(\mathbb{R}^d)$ extends $F$ locally in some open neighbourhood in $\mathbb{S}$ around $\omega$. Here recall that $x \otimes y:=x y^T$, $x,y \in \mathbb{R}^{d}$ and $y^T$ denotes the transpose of $y$. This definition is again independent of the local smooth extension $\widetilde{F}$ for $F$. In short notation we again write  
\begin{align*}
\text{grad}_{\mathbb{S}}\,F= (I-\omega \otimes \omega) \,\nabla_{\omega} F,~F\in C^\infty(\mathbb{S}).
\end{align*}
And since $\text{grad}_{\mathbb{S}}\,F$ is a vector field on $\mathbb{S}$, embedded in $\mathbb{R}^d$, we change notation and canonically identify it again pointwisely with its associated derivation, i.e.,
\begin{align*}
\text{grad}_{\mathbb{S}}\,F \equiv \text{grad}_{\mathbb{S}}\,F \cdot \nabla_\omega = (I-\omega \otimes \omega) \,\nabla_{\omega} F \cdot \nabla_\omega,
\end{align*}
analogously as in the $X$-case above. Recall that, for $f,g \in C^\infty(\mathbb{S})$ one has
\begin{align*}
\left( \text{grad}_{\mathbb{S}}\,f , \text{grad}_{\mathbb{S}}\,g \right)_{T\mathbb{S}} = \left( \text{grad}_{\mathbb{S}}\,f , \text{grad}_{\mathbb{S}}\,g  \right)_{\text{euc}}
\end{align*}
where $\left( \cdot, \cdot \right)_{T\mathbb{S}}$ denotes the induced Riemannian scalar product on the tangent bundle $T\mathbb{S}$ of $\mathbb{S}$.

Moreover, we need the following representation of the Laplace Beltrami operator on $\mathbb{S}$. So let $e_n$ be the $n$-th unit vector in $\mathbb{R}^d$, $n=1,\ldots,d$, and define the spherical vector fields $\mathcal{S}_n:\mathbb{S} \rightarrow \mathbb{R}^d$ as
\begin{align} \label{Def_Sn}
\mathcal{S}_n \equiv (I-\omega \otimes \omega) e_n \cdot \nabla_\omega
\end{align}
Then the spherical Laplace Beltrami is given as $\Delta_\mathbb{S} = \sum_{n=1}^d \mathcal{S}_n^2$. An even more general representation formula can be found in \cite[Theo.~3.1.4]{Hsu02}.

Finally, recall that $\nu$ denotes the normalized Riemannian volume measure of $\mathbb{S}$, i.e., $\nu={\text{vol}(\mathbb{S})}^{-1} \mathcal{S}$ where $\text{vol}(\mathbb{S})=\mathcal{S}(\mathbb{S})$ is the surface area of $\mathbb{S}$ and $\mathcal{S}$ the Riemannian volume measure of $\mathbb{S}$. For later use, we now recall some formulas which can be proven using the Gaussian integral formula, see e.g.~\cite[Sec.~7]{GKMS12}.

\begin{Lm} \label{Gauss_formula_sphere}
Let $B$ be a matrix with entries $b_{ij} \in \mathbb{R}$ for all $i,j=1, \ldots, d$. Moreover, let $z,z_1,z_2 \in \mathbb{R}^{d}$. It holds
\begin{align*}
&\int_{\mathbb{S}} \left( z, \omega \right)_{\text{euc}} \mathrm{d}\nu(\omega)=0,~\int_{\mathbb{S}} \left( B \omega, \omega \right)_{\text{euc}} \mathrm{d}\nu(\omega) =  \frac{1}{d} \sum_{i=1}^{d} b_{ii},\\
&\int_{\mathbb{S}} \left(  z_1, \omega \right)_{\text{euc}} \left(  z_2, \omega \right)_{\text{euc}} \mathrm{d}\nu(\omega) =  \frac{1}{d} \left( z_1, z_2 \right)_{\text{euc}}.
\end{align*}
\end{Lm}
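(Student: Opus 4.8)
The plan is to reduce all three identities to the orthogonal invariance of the normalized surface measure $\nu$ on $\mathbb{S}=\mathbb{S}^{d-1}$; this is the concrete form that the ``Gaussian integral formula'' takes here, and it can equivalently be phrased through a standard Gaussian vector on $\mathbb{R}^d$. Recall that $\nu$ is a probability measure with $|\omega|=1$ for $\nu$-a.e.~$\omega \in \mathbb{S}$, and that $\nu$ is invariant under every $R \in O(d)$, since such $R$ restricts to an isometry of $\mathbb{S}$ and the Riemannian volume measure (hence also its normalization) is preserved by isometries; in particular $\nu$ is invariant under the antipodal map $\omega \mapsto -\omega$, under sign changes of individual coordinates, and under permutations of the coordinates.

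For the first identity, the antipodal symmetry gives
\begin{align*}
\int_{\mathbb{S}} \left(z,\omega\right)_{\text{euc}} \mathrm{d}\nu(\omega) = \int_{\mathbb{S}} \left(z,-\omega\right)_{\text{euc}} \mathrm{d}\nu(\omega) = -\int_{\mathbb{S}} \left(z,\omega\right)_{\text{euc}} \mathrm{d}\nu(\omega),
\end{align*}
so this integral vanishes. For the other two it suffices to determine the symmetric matrix $M=(m_{ij})_{i,j=1}^{d}$ with $m_{ij} := \int_{\mathbb{S}} \omega_i \omega_j \, \mathrm{d}\nu(\omega)$, since by bilinearity $\int_{\mathbb{S}} \left(B\omega,\omega\right)_{\text{euc}} \mathrm{d}\nu = \sum_{i,j} b_{ij} m_{ij}$ and $\int_{\mathbb{S}} \left(z_1,\omega\right)_{\text{euc}} \left(z_2,\omega\right)_{\text{euc}} \mathrm{d}\nu = \sum_{i,j} (z_1)_i (z_2)_j \, m_{ij}$. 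If $i \neq j$, flipping the sign of the $j$-th coordinate leaves $\nu$ invariant but changes the sign of $\omega_i\omega_j$, so $m_{ij}=0$; a coordinate permutation shows $m_{11}=\cdots=m_{dd}=:c$; and summing the diagonal gives $dc=\sum_{i=1}^d m_{ii}=\int_{\mathbb{S}}|\omega|^2\,\mathrm{d}\nu=\nu(\mathbb{S})=1$, whence $c=\tfrac1d$ and $m_{ij}=\tfrac1d\delta_{ij}$. Substituting this into the two sums yields $\tfrac1d\sum_{i=1}^d b_{ii}$ and $\tfrac1d\left(z_1,z_2\right)_{\text{euc}}$, as claimed.

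Alternatively, in the spirit of the stated Gaussian integral formula, one lets $G=(G_1,\dots,G_d)$ be a standard Gaussian vector on $\mathbb{R}^d$; then $G/|G|$ is $\nu$-distributed and independent of $|G|$, and from $\mathbb{E}[G_iG_j]=\delta_{ij}$ together with $\mathbb{E}[|G|^2]=d$ one gets $\delta_{ij}=\mathbb{E}[G_iG_j]=\mathbb{E}[|G|^2]\,m_{ij}=d\,m_{ij}$, recovering $m_{ij}=\tfrac1d\delta_{ij}$; equivalently, one writes $\int_{\mathbb{R}^d}(\,\cdot\,)e^{-|x|^2/2}\,\mathrm{d}x$ in polar coordinates and separates the radial and spherical integrals. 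There is no genuine obstacle here; the only point deserving a line of justification is that $\nu$ inherits $O(d)$-invariance from the Riemannian structure of $\mathbb{S}$, which is standard.
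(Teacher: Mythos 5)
Your proof is correct. Note that the paper does not prove Lemma \ref{Gauss_formula_sphere} at all: it simply recalls the identities and refers to the Gaussian integral formula as in \cite[Sec.~7]{GKMS12}, i.e.\ precisely the device you sketch in your second paragraph (write $\int_{\mathbb{R}^d}(\cdot)\,e^{-|x|^2/2}\,\mathrm{d}x$ in polar coordinates, or equivalently use that $G/|G|$ is $\nu$-distributed and independent of $|G|$ for a standard Gaussian $G$, so that $\delta_{ij}=\mathbb{E}[G_iG_j]=d\,m_{ij}$). Your primary argument is instead a purely symmetry-based one: $O(d)$-invariance of $\nu$ (antipodal map, coordinate sign flips, permutations) pins down the first moment as $0$ and the second-moment matrix as $m_{ij}=\tfrac1d\delta_{ij}$, after which all three identities follow by bilinearity. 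Both routes are complete and elementary; the symmetry argument is self-contained and avoids any appeal to Gaussian measures or polar-coordinate factorization, while the Gaussian route matches the computation the paper cites and generalizes more readily to higher-order moments of $\nu$, which is why it is the standard reference tool. The only step in your main argument that needs (and gets) a word of justification is that $\nu$ inherits orthogonal invariance from the Riemannian volume measure, which is indeed standard.
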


Furthermore, we need the following identity, see again e.g.~\cite[Lem.~7.1]{GKMS12}.

\begin{Lm} \label{Lm_formula_sphere_strong_feller}
Let $I_{\mathbb{S}}$ denotes the function $\omega \mapsto \omega$, $\omega \in \mathbb{S}$, where $\Delta_{\mathbb{S}}I_{\mathbb{S}}$ is understood componentwise. Then it holds
\begin{align*}
\Delta_{\mathbb{S}} I_{\mathbb{S}}= - (d-1) \,I_{\mathbb{S}}.
\end{align*}
\end{Lm}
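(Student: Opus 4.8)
The plan is to evaluate the representation $\Delta_{\mathbb{S}} = \sum_{n=1}^d \mathcal{S}_n^2$ from \eqref{Def_Sn} directly on the coordinate functions, working one component at a time. Fix $j \in \{1,\dots,d\}$ and let $F_j \in C^\infty(\mathbb{S})$ denote the $j$-th component of $I_{\mathbb{S}}$, i.e.\ $F_j(\omega) = (e_j,\omega)_{\text{euc}} = \omega_j$. A global smooth extension of $F_j$ to $\mathbb{R}^d$ is the linear function $\omega \mapsto \omega_j$ itself, whose Euclidean gradient is the constant vector $e_j$. By the definition of the derivation $\mathcal{S}_n$ this gives
\begin{align*}
\mathcal{S}_n F_j = \big( (I-\omega\otimes\omega)e_n \big) \cdot e_j = (e_n - \omega_n\,\omega)\cdot e_j = \delta_{nj} - \omega_n\,\omega_j .
\end{align*}

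Next I would apply $\mathcal{S}_n$ a second time. The function $\mathcal{S}_n F_j$ computed above is the restriction to $\mathbb{S}$ of the polynomial $\omega \mapsto \delta_{nj} - \omega_n\omega_j$, which may be used as its own smooth extension; its Euclidean gradient is $-(\omega_j e_n + \omega_n e_j)$. Hence
\begin{align*}
\mathcal{S}_n^2 F_j = \big( (I-\omega\otimes\omega)e_n \big) \cdot \big( -(\omega_j e_n + \omega_n e_j) \big) = -\big( \omega_j + \delta_{nj}\,\omega_n - 2\,\omega_n^2\,\omega_j \big),
\end{align*}
where I expanded $(e_n - \omega_n\omega)\cdot(\omega_j e_n + \omega_n e_j)$ using $e_n\cdot e_j = \delta_{nj}$ and $\omega\cdot e_n = \omega_n$. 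Summing over $n = 1,\dots,d$ and using $\sum_{n=1}^d \omega_n^2 = |\omega|^2 = 1$ on $\mathbb{S}$ yields
\begin{align*}
\Delta_{\mathbb{S}} F_j = \sum_{n=1}^d \mathcal{S}_n^2 F_j = -d\,\omega_j - \omega_j + 2\,\omega_j = -(d-1)\,\omega_j = -(d-1)\,F_j ,
\end{align*}
which is the claimed identity, componentwise.

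I expect no real obstacle here; the computation is elementary once the representation $\Delta_{\mathbb{S}} = \sum_n \mathcal{S}_n^2$ is in place. The only point deserving a word of care is that the second application of $\mathcal{S}_n$ requires choosing a smooth extension of the already-computed function $\mathcal{S}_n F_j$ to a neighbourhood in $\mathbb{R}^d$; since $\mathcal{S}_n F_j$ is polynomial in $\omega$ the obvious extension does the job, and the outcome is in any case independent of the choice, as recorded when these vector fields were introduced. As an alternative one could invoke the polar-coordinate splitting $\Delta = \partial_r^2 + \tfrac{d-1}{r}\partial_r + \tfrac{1}{r^2}\Delta_{\mathbb{S}}$, apply it to the harmonic degree-one function $x \mapsto x_j$ on $\mathbb{R}^d$, and restrict to $r = 1$; I would nonetheless prefer the self-contained computation above, which uses only what has already been set up in the paper.
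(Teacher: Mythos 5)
Your computation is correct: using the representation $\Delta_{\mathbb{S}}=\sum_{n=1}^d \mathcal{S}_n^2$ with $\mathcal{S}_n \equiv (I-\omega\otimes\omega)e_n\cdot\nabla_\omega$ set up just before the lemma, the two applications of $\mathcal{S}_n$ to $F_j(\omega)=\omega_j$ and the summation over $n$ (invoking $\sum_n\omega_n^2=1$ on $\mathbb{S}$) all check out, yielding $\Delta_{\mathbb{S}}F_j=-(d-1)F_j$. Note that the paper itself does not reproduce a proof here; it simply cites \cite[Lem.~7.1]{GKMS12}. Your argument therefore supplies a self-contained verification in exactly the language the section has already established, which is entirely in the spirit of the paper. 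Your remark about independence of the choice of smooth extension is apposite, and your alternative route via the polar decomposition of the Euclidean Laplacian applied to the harmonic linear function $x\mapsto x_j$ is the other standard way to see the identity; both are fine, and your preference for the first is reasonable since it avoids appealing to any structure beyond what the paper has introduced.
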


The following lemma is also contained in \cite{GKMS12}. For completeness, we shall repeat the argument here.

\begin{Lm} \label{Lm_phi_sphere}
Let  $\phi:\mathbb{S} \rightarrow \mathbb{R}$ be defined by $\phi(\omega)= \left( z , \omega \right)_{\text{euc}}$, $\omega \in \mathbb{S}$, where $z \in \mathbb{R}^{d}$ is fixed. Define the vector field $X$ by $X= \text{grad}_{\mathbb{S}} \,\phi$. Then it holds
\begin{align*}
\int_{\mathbb{S}} X(f) \, \mathrm{d}\nu = \left(d-1\right) \int_{\mathbb{S}}  f \, \phi  \,\mathrm{d} \nu ,~f \in C^\infty(\mathbb{S}).
\end{align*}
\end{Lm}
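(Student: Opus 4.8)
The plan is to derive the identity from an integration-by-parts (divergence theorem) argument on the closed manifold $\mathbb{S}$, combined with the explicit formulas for $\text{grad}_{\mathbb{S}}$ and $\Delta_{\mathbb{S}}$ collected above. First I would recall that for any smooth vector field $Y$ on the closed manifold $\mathbb{S}$ (which has no boundary) and any $f \in C^\infty(\mathbb{S})$ one has the integration-by-parts formula
\begin{align*}
\int_{\mathbb{S}} Y(f) \, \mathrm{d}\nu = - \int_{\mathbb{S}} f \, (\text{div}_{\mathbb{S}} Y) \, \mathrm{d}\nu,
\end{align*}
since $\int_{\mathbb{S}} \text{div}_{\mathbb{S}}(fY)\,\mathrm{d}\nu = 0$. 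Applying this with $Y = X = \text{grad}_{\mathbb{S}}\,\phi$ reduces everything to computing $\text{div}_{\mathbb{S}}(\text{grad}_{\mathbb{S}}\,\phi) = \Delta_{\mathbb{S}}\phi$ for the linear function $\phi(\omega) = (z,\omega)_{\text{euc}}$.

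Next I would evaluate $\Delta_{\mathbb{S}}\phi$. Since $\phi$ is the restriction to $\mathbb{S}$ of the function $\omega \mapsto (z,\omega)_{\text{euc}}$ on $\mathbb{R}^d$, and $I_{\mathbb{S}}$ denotes the identity map $\omega \mapsto \omega$, we have $\phi = (z, I_{\mathbb{S}})_{\text{euc}}$ componentwise, so by linearity of $\Delta_{\mathbb{S}}$ and Lemma \ref{Lm_formula_sphere_strong_feller},
\begin{align*}
\Delta_{\mathbb{S}}\phi = \big(z, \Delta_{\mathbb{S}} I_{\mathbb{S}}\big)_{\text{euc}} = \big(z, -(d-1) I_{\mathbb{S}}\big)_{\text{euc}} = -(d-1)\,\phi.
\end{align*}
Combining this with the integration-by-parts formula yields
\begin{align*}
\int_{\mathbb{S}} X(f)\,\mathrm{d}\nu = -\int_{\mathbb{S}} f\,\Delta_{\mathbb{S}}\phi \,\mathrm{d}\nu = (d-1)\int_{\mathbb{S}} f\,\phi\,\mathrm{d}\nu,
\end{align*}
which is the claim.

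If one prefers to avoid invoking the abstract divergence theorem and work with the embedded-coordinate formulas, an alternative route is to write $X = \sum_{n=1}^d X_n \mathcal{S}_n$ in terms of the vector fields $\mathcal{S}_n$ from \eqref{Def_Sn} and use $\Delta_{\mathbb{S}} = \sum_n \mathcal{S}_n^2$ together with the self-adjointness of each $\mathcal{S}_n$ on $L^2(\mathbb{S},\nu)$ (equivalently, $\int_{\mathbb{S}} \mathcal{S}_n(g)\,\mathrm{d}\nu = 0$, which follows from the Gauss-type formulas underlying Lemma \ref{Gauss_formula_sphere}); the computation again collapses to $\Delta_{\mathbb{S}}\phi = -(d-1)\phi$. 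The only mild obstacle is bookkeeping: making sure the sign in the integration-by-parts formula is correct and that the componentwise application of $\Delta_{\mathbb{S}}$ to $I_{\mathbb{S}}$ in Lemma \ref{Lm_formula_sphere_strong_feller} is legitimately combined with the fixed vector $z$. Since $\mathbb{S}$ is compact without boundary and $f,\phi$ are smooth, no decay or cutoff arguments are needed, so this is the entire proof.
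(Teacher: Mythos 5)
Your proof is correct and takes essentially the same route as the paper: integrate by parts on the closed manifold $\mathbb{S}$ to reduce the left side to $-\int_{\mathbb{S}} f\,\Delta_{\mathbb{S}}\phi\,\mathrm{d}\nu$, then invoke $\Delta_{\mathbb{S}}\phi = -(d-1)\phi$ from Lemma \ref{Lm_formula_sphere_strong_feller}. The paper phrases the first step as Green's formula applied to $X(f)=\left(\text{grad}_{\mathbb{S}}\phi,\text{grad}_{\mathbb{S}}f\right)_{T\mathbb{S}}$, while you apply the divergence theorem to the vector field $X$ directly; these are the same identity.
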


\begin{proof}
Let $f \in C^\infty(\mathbb{S})$. Then it is easy to see that $X(f)=\left(\text{grad}_{\mathbb{S}} \phi, \text{grad}_{\mathbb{S}} f\right)_{T\mathbb{S}}$. Hence we have
\begin{align*} 
\int_{\mathbb{S}} X(f) \, \mathrm{d}\nu = \int_{\mathbb{S}} \left(\text{grad}_{\mathbb{S}} \phi, \text{grad}_{\mathbb{S}} f\right)_{T\mathbb{S}} \, \mathrm{d}\nu = - \int_{\mathbb{S}} \left( \Delta_{\mathbb{S}} \phi \right) f \, \mathrm{d}\nu.
\end{align*}
by Green's formula. The claim follows since $\Delta_{\mathbb{S}}\phi  = -\left(d-1\right) \phi$ by Lemma \ref{Lm_formula_sphere_strong_feller}.
\end{proof}

\subsection{The data (D)} \label{Sec_the_data_(D)}

Now we start introducing and verifying the conditions (D) from Section \ref{Section_Extension_Hypocoercivity}. First recall that if $f$ is locally Lipschitz continuous on $\mathbb{R}^d$, then it is differentiable a.e.~on $\mathbb{R}^d$ and $\nabla f \in L^\infty_{\text{loc}}(\mathbb{R}^d)$, see e.g.~\cite[Satz~8.5]{Alt06}. First we introduce the Hilbert space and our desired Kolmogorov backward operator under weak continuity assumption on the potential $V$.

\begin{Df} \label{Df_operator_weak_ass_potential} The potential $V:\mathbb{R}^d \rightarrow \mathbb{R}$ is assumed to be locally Lipschitz continuous. The measure space $(\mathbb{M},\mathcal{B}(\mathbb{M}),\mu)$ and Hilbert space $\H$ is defined as 
\begin{align*}
\mu=e^{-V}\,\mathrm{d}x \otimes \nu,~\H=L^2(\mathbb{M},\mu).
\end{align*}
We introduce $\D$ as $\D=C^\infty_c(\mathbb{M})$ and let $\mathcal{A}:\mathbb{M} \rightarrow \mathbb{R}^{2d}$ be given as 
\begin{align} 
\mathcal{A}(x,\omega)= - \begin{pmatrix} \omega \\ - \frac{1}{d-1} \left(I- \omega \otimes \omega\right) \nabla V (x) \end{pmatrix} ~\mbox{ for a.e.~} (x,\omega) \in \mathbb{M}.
\end{align}
Define $(A,\D)$ as $Af=\mathcal{A}f$ for $f \in \D$. By the discussion in Section \ref{Section_Formulas_Notations}, note that really $Af \in \H$ and that $A$ is given on $\D$ as
\begin{align} \label{alternative_repr_A_smooth_case} 
A\equiv - \omega \cdot \nabla_x + \text{grad}_{\mathbb{S}}\Phi \cdot \nabla_\omega.
\end{align} 
Here $\Phi:\mathbb{M} \rightarrow \mathbb{R}$ is defined as 
\begin{align*}
\Phi(x,\omega)= \frac{1}{d-1} \left(\omega,\nabla V(x)\right)_{\text{euc}} \mbox{ for } \mbox{a.e.~}(x,\omega) \in \mathbb{M}.
\end{align*}
Finally, $(S,\D)$ and $(L,\D)$ are linear operators on $\H$ defined via
\begin{align*}
S=\frac{\sigma^2}{2}\,\Delta_\mathbb{S},~L=S-A ~~\mbox{ on } \D. 
\end{align*}
\end{Df}

Now the next statement summarizes some basic properties of $(L,\D)$.

\begin{Lm} \label{Lm_properties_of_L} Let $V:\mathbb{R}^d \rightarrow \mathbb{R}$ be locally Lipschitz continuous and let $(L,\D)$, $L=S-A$ on $\D=C_c^\infty(\mathbb{M})$, $\H=L^2(\mathbb{M},\mu)$ and the probability measure $\mu$ be as in Definition \ref{Df_operator_weak_ass_potential}. Then
\begin{itemize}
\item[(i)]
$(S,\D)$ is symmetric and nonpositive definite on $\H$.
\item[(ii)]
$(A,\D)$ is antisymmetric on $\H$.
\item[(iii)]
$\mu$ is invariant for $(L,\D)$ in the sense that $Lf \in L^1(\mathbb{M},\mu)$ for all $f \in \D$ and \eqref{Df_invariant_measure} is satisfied.
\end{itemize}
\end{Lm}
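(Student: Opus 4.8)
The plan is to verify the three assertions of Lemma \ref{Lm_properties_of_L} by direct computation, using the integration-by-parts formulas collected in Section \ref{Section_Formulas_Notations} together with the product structure $\mu = e^{-V}\mathrm{d}x \otimes \nu$ and Fubini's theorem. Throughout I would work with $f,g \in \D = C_c^\infty(\mathbb{M})$, so that all functions, their derivatives, and the vector fields applied to them are bounded with compact support; this makes every integral below absolutely convergent and legitimizes all manipulations (in particular, the claim $Lf \in L^1(\mathbb{M},\mu)$ in (iii) is immediate since $Lf$ is bounded with compact support and $\mu$ is finite).

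For part (i), I would first note $Sf = \tfrac{\sigma^2}{2}\Delta_{\mathbb{S}}f$ acts only in the $\omega$-variable, so by Fubini $\left(Sf,g\right)_\H = \tfrac{\sigma^2}{2}\int_{\mathbb{R}^d}\left(\int_{\mathbb{S}} (\Delta_{\mathbb{S}}f)\,g\,\mathrm{d}\nu\right)e^{-V}\mathrm{d}x$. For fixed $x$, Green's formula on the closed manifold $\mathbb{S}$ gives $\int_{\mathbb{S}}(\Delta_{\mathbb{S}}f)\,g\,\mathrm{d}\nu = -\int_{\mathbb{S}}\left(\mathrm{grad}_{\mathbb{S}}f,\mathrm{grad}_{\mathbb{S}}g\right)_{T\mathbb{S}}\mathrm{d}\nu$, which is symmetric in $f,g$ and yields $\left(Sf,f\right)_\H = -\tfrac{\sigma^2}{2}\int_{\mathbb{M}}|\mathrm{grad}_{\mathbb{S}}f|^2\,\mathrm{d}\mu \leq 0$. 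This settles symmetry and nonpositivity simultaneously.

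For part (ii), write $A = A_1 + A_2$ with $A_1 \equiv -\omega\cdot\nabla_x$ and $A_2 \equiv \mathrm{grad}_{\mathbb{S}}\Phi\cdot\nabla_\omega$, and show each piece, together with its interaction with $e^{-V}$, integrates by parts to an antisymmetric form. For $A_1$: since $A_1$ differentiates only in $x$ and $e^{-V}$ depends only on $x$, integrating by parts in $x$ (for fixed $\omega$) produces $\int_{\mathbb{R}^d}(-\omega\cdot\nabla_x f)\,g\,e^{-V}\mathrm{d}x = \int_{\mathbb{R}^d} f\,(\omega\cdot\nabla_x g)\,e^{-V}\mathrm{d}x - \int_{\mathbb{R}^d} f\,g\,(\omega\cdot\nabla_x V)\,e^{-V}\mathrm{d}x$; the first term contributes the antisymmetric part, and the leftover term is $\int_{\mathbb{M}} f\,g\,(d-1)\Phi\,\mathrm{d}\mu$ by the definition of $\Phi$. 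For $A_2$: for fixed $x$, $\mathrm{grad}_{\mathbb{S}}\Phi(x,\cdot)$ is exactly the vector field $X$ of Lemma \ref{Lm_phi_sphere} with $z = \tfrac{1}{d-1}\nabla V(x)$, so applying that lemma to the product $fg \in C^\infty(\mathbb{S})$ gives $\int_{\mathbb{S}} (A_2 f)\,g + f\,(A_2 g)\,\mathrm{d}\nu = \int_{\mathbb{S}} X(fg)\,\mathrm{d}\nu = (d-1)\int_{\mathbb{S}} f\,g\,\Phi\,\mathrm{d}\nu$; multiplying by $e^{-V}$ and integrating in $x$ shows the symmetric part of $A_2$ cancels precisely the leftover term from $A_1$. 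Summing, $\left(Af,g\right)_\H = -\left(f,Ag\right)_\H$.

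For part (iii), invariance $\int_{\mathbb{M}}Lf\,\mathrm{d}\mu = 0$ for $f \in \D$ follows by the same tools with $g \equiv 1$: $\int_{\mathbb{M}}Sf\,\mathrm{d}\mu = 0$ by Green's formula (the spherical gradient of the constant $1$ vanishes), and $\int_{\mathbb{M}}A_1 f\,\mathrm{d}\mu = -\int_{\mathbb{M}} f\,(d-1)\Phi\,\mathrm{d}\mu$ while $\int_{\mathbb{M}}A_2 f\,\mathrm{d}\mu = (d-1)\int_{\mathbb{M}} f\,\Phi\,\mathrm{d}\mu$ by Lemma \ref{Lm_phi_sphere} again (with $g\equiv 1$), so the two cancel. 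Hence $\int_{\mathbb{M}}Lf\,\mathrm{d}\mu = \int_{\mathbb{M}}Sf\,\mathrm{d}\mu - \int_{\mathbb{M}}Af\,\mathrm{d}\mu = 0$, which is \eqref{Df_invariant_measure}.

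The only genuinely delicate point is justifying the integration by parts in the $x$-variable in part (ii) and the use of Lemma \ref{Lm_phi_sphere} when $V$ is merely locally Lipschitz: then $\nabla V$ exists only a.e.\ and lies in $L^\infty_{\mathrm{loc}}$, so $\Phi$ and the vector field $A_2$ have only locally bounded, a.e.-defined coefficients. I would handle this by recalling (as in the preamble to Section \ref{Sec_the_data_(D)}, citing \cite[Satz~8.5]{Alt06}) that a locally Lipschitz function is differentiable a.e.\ with $\nabla V \in L^\infty_{\mathrm{loc}}$, and that it is still absolutely continuous on lines, so the one-dimensional fundamental theorem of calculus underlying the $x$-integration by parts remains valid; the compact support of $f,g$ confines everything to a region where $\nabla V$ is bounded, so no boundary terms and no integrability issues appear. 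Lemma \ref{Lm_phi_sphere} itself is a statement purely on $\mathbb{S}$ for each fixed $x$ and needs only $z \in \mathbb{R}^d$, so it applies verbatim with $z = \tfrac{1}{d-1}\nabla V(x)$ for a.e.\ $x$, and the outer $x$-integral is then unproblematic. If one prefers to avoid the a.e.-differentiability subtleties, an alternative is to approximate $V$ by smooth potentials and pass to the limit, but the direct argument via absolute continuity on lines is cleaner.
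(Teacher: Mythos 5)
Your proof is correct and uses essentially the same ingredients as the paper's: Green's formula on $\mathbb{S}$ for part (i), the weak Gaussian integral formula (for locally Lipschitz $V$, cf.\ \cite[A.6.8]{Alt06}) for the $x$-integration by parts, Lemma \ref{Lm_phi_sphere}, and the Leibniz rule for the derivation $A$. The only organizational difference is order: the paper first establishes $\int_\mathbb{M} Ah\,\mathrm{d}\mu=0$ for all $h\in\D$ and then gets antisymmetry by applying this to the product $h=f_1f_2$ via $A(f_1 f_2)=(Af_1)f_2+f_1(Af_2)$, whereas you prove antisymmetry directly and recover invariance by setting $g\equiv 1$; this is the same computation read in the opposite direction. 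One small sign slip in your sketch of (ii): after integrating by parts in $x$, the leftover term is $-\int_{\mathbb{M}} f\,g\,(\omega\cdot\nabla_x V)\,\mathrm{d}\mu = -(d-1)\int_{\mathbb{M}} f\,g\,\Phi\,\mathrm{d}\mu$, with a minus sign, which is exactly what cancels the $+(d-1)\int_{\mathbb{M}} fg\,\Phi\,\mathrm{d}\mu$ contribution from $A_2$; your part (iii) has the sign right, so this is just a typo, not a gap.
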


\begin{proof}
Let $f_1,f_2 \in C^\infty(\mathbb{M})$ with at least one of both having compact support. Then
\begin{align*}
\int_\mathbb{M} \left(\Delta_\mathbb{S} f_1\right) f_2 \, \mathrm{d}\mu = - \int_{\mathbb{R}^d}  e^{-V} \int_\mathbb{S} \left( \text{grad}_{\mathbb{S}}\, f_1, \text{grad}_{\mathbb{S}}\, f_2 \right)_{T\mathbb{S}} \, \mathrm{d}\nu \,   \mathrm{d}x = \int_\mathbb{M} f_1 \left(\Delta_\mathbb{S} f_2\right)\, \mathrm{d}\mu
\end{align*}
by using Green's formula and Fubini's theorem. In particular, (i) is shown. Now let $h \in \D$. Note that $\int_\mathbb{M} S h \, \mathrm{d}\mu=0$ by the last identity. Choose $g \in C^\infty_c(\mathbb{R}^d)$. Furthermore, note that $e^{-V}$ is also locally Lipschitz continuous, thus $e^{-V} \in H^{1,\infty}_{\text{loc}}(\mathbb{R}^d)$. This together with the weak Gaussian integral formula, see \cite[A.6.8]{Alt06}, implies 
\begin{align*}
\int_{\mathbb{R}^d} \partial_{x_i} \left( g \,e^{-V} \right) \, \mathrm{d}x =0,~i=1,\ldots,d.
\end{align*}
Consequently, we get
\begin{align} \label{eq1_properties_of_L}
\int_\mathbb{S} \int_{\mathbb{R}^d}\,\omega \cdot \nabla_x \left( h \, e^{-V} \right) \, \mathrm{d}x \,\mathrm{d}\nu =0.
\end{align}
By using Lemma \ref{Lm_phi_sphere} and Fubini's theorem note that
\begin{align} \label{eq2_properties_of_L}
\int_\mathbb{M} \text{grad}_\mathbb{S} \, \Phi (h) \, \mathrm{d}\mu = \int_\mathbb{M} h \, \left(\omega, \nabla V \right)_{\text{euc}} \, \mathrm{d}\mu. 
\end{align}
So from Identity \eqref{eq1_properties_of_L} and \eqref{eq2_properties_of_L} we can infer that $\int_\mathbb{M} Ah \, \mathrm{d}\mu =0$. Now let $h:=f_1f_2$ with $f_1$ and $f_2$ be as above. Then
\begin{align*}
\int_\mathbb{M} \left(Af_1\right) f_2 \, \mathrm{d}\mu + \int_\mathbb{M} f_1 \left(Af_2\right) \, \mathrm{d}\mu = \int_\mathbb{M} Ah \, \mathrm{d}\mu =0.
\end{align*}
Altogether, statement (ii) and (iii) follows.
\end{proof}

\begin{Df}
The previous lemma implies that $(S,\D)$ and $(A,\D)$ are densely defined and dissipative on $\H$, hence closable. The closures of $(S,\D)$ and $(A,\D)$ in $\H$ are denoted by $(S,D(S))$ and $(A,D(A))$. Then $(S,D(S))$ is symmetric and $(A,D(A))$ is antisymmetric. Moreover, $(L,\D)$ is also densely defined and dissipative on $\H$. The closure of $(L,\D)$ in $\H$ is denoted by $(L,D(L))$.
\end{Df}

Next, we introduce the desired orthogonal projections $P$ and $P_S$.

\begin{Df} As before, let $V:\mathbb{R}^d \rightarrow \mathbb{R}$ be locally Lipschitz continuous. First of all, $P_S:\H \rightarrow \H$ is defined as
\begin{align*}
P_S  f= \int_{\mathbb{S}} f\, \mathrm{d}\nu,~f \in \H.
\end{align*}
By using Fubini's theorem and the fact that $(\mathbb{S},\mathcal{B}(\mathbb{S}),\nu)$ is a probability measure, one easily sees that $P_S$ is a well-defined orthogonal projection on $\H$ satisfying 
\begin{align*}
P_S  f \in L^2(e^{-V}\mathrm{d}x)~\mbox{ and }~\| P_S  f\|_{L^2(e^{-V}\mathrm{d}x)}=\| P_S f\|_{\H},~f \in \H.
\end{align*}
In addition, assume that $e^{-V} \mathrm{d}x$ is a probability measure on $(\mathbb{R}^d,\mathcal{B}(\mathbb{R}^d))$ and define $L^2(e^{-V}\mathrm{d}x)$ to be $L^2(\mathbb{R}^d,e^{-V}\mathrm{d}x)$. Then $P:\H \rightarrow \H$ is given as
\begin{align*}
P f= P_S   f - \left(f,1\right)_\H,~f \in \H.
\end{align*}
It is again easily verified that $P$ is indeed an orthogonal projection and it holds 
\begin{align*}
P  f \in L^2(e^{-V}\mathrm{d}x)~\mbox{ and }~\| P  f\|_{L^2(e^{-V}\mathrm{d}x)}=\| P f\|_{\H},~f \in \H.
\end{align*}
Finally, for notational convenience, we write
\begin{align*}
f_S:=P_S f,~f \in \H.
\end{align*}
\end{Df}

The regularity properties of $P$ required in (D5) are contained in the following lemma.

\begin{Lm} \label{Lm_verification_D5_spherical_velocity_Langevin}
Let $V:\mathbb{R}^d \rightarrow \mathbb{R}$ be locally Lipschitz continuous and let $e^{-V} \mathrm{d}x$ be a probability measure on $(\mathbb{R}^d,\mathcal{B}(\mathbb{R}^d))$. Then it holds $P(\H) \subset D(S)$, $S P=0$ as well as $P(\D) \subset D(A)$ and $AP (\D) \subset D(A)$. Moreover, we have the following formula
\begin{align} \label{Formula_AP}
APf  = - \omega \cdot \nabla_x \,f_S,~f\in \D.
\end{align}
\end{Lm}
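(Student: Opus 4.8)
The plan is to verify each of the asserted regularity properties of $P$ by reducing everything to the explicit description $Pf = f_S - (f,1)_\H$ with $f_S = P_S f = \int_\mathbb{S} f\,\mathrm{d}\nu$, which is a function depending only on the $x$-variable, together with the concrete action of $S = \frac{\sigma^2}{2}\Delta_\mathbb{S}$ and $A \equiv -\omega\cdot\nabla_x + \mathrm{grad}_\mathbb{S}\Phi\cdot\nabla_\omega$. First I would observe that for any $f \in \H$, the function $Pf$ is (a.e.) a function of $x$ alone (the constant subtraction changes nothing in the $\omega$-dependence), hence is annihilated by the spherical derivatives: $\Delta_\mathbb{S}(Pf) = 0$ in the appropriate weak sense. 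Since $(S,D(S))$ is the closure of $(S,\D)$ and the constant function $1 \in D(S)$ with $S1 = 0$, one shows $Pf \in D(S)$ with $SPf = 0$; more carefully, for a function $g = g(x) \in L^2(e^{-V}\mathrm{d}x)$ I would approximate by $g_k \in C_c^\infty(\mathbb{R}^d)$, note $S g_k = 0$ for each $k$ (as $g_k$ does not depend on $\omega$), and pass to the limit using closedness of $S$, which gives $g \in D(S)$, $Sg = 0$; applying this to $g = Pf$ and to the constant $1$ yields $P(\H) \subset D(S)$ and $SP = 0$.

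Next, for $P(\D) \subset D(A)$: given $f \in \D = C_c^\infty(\mathbb{M})$, the function $f_S(x) = \int_\mathbb{S} f(x,\omega)\,\mathrm{d}\nu(\omega)$ lies in $C_c^\infty(\mathbb{R}^d)$ (smoothness and compact support in $x$ survive integration in $\omega$), and $Pf = f_S - (f,1)_\H$ differs from $f_S$ by a constant; since $A$ annihilates constants (formally $A1 = 0$, and indeed $1 \in D(A)$ with $A1 = 0$ by the same closure argument, or simply because $A$ is a derivation), it suffices to handle $f_S$. Because $f_S$ depends only on $x$, the spherical-gradient term in $A$ applied to $f_S$ vanishes, so formally $Af_S = -\omega\cdot\nabla_x f_S$. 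This is exactly the claimed formula \eqref{Formula_AP}; to make it rigorous as a statement about the \emph{closed} operator $A$, I would exhibit $f_S$ as a limit in graph norm of functions in $\D$: pick a cutoff $\chi_k(\omega) \equiv 1$ (constant in $\omega$) times $f_S(x)$ — but since $f_S \in C_c^\infty(\mathbb{R}^d)$ one can directly take $\tilde f_k(x,\omega) = f_S(x)\psi_k(\omega)$ with $\psi_k \to 1$; actually the cleanest route is to note $f_S \otimes 1 \in C^\infty(\mathbb{M})$ has compact $x$-support, multiply by a trivial $\omega$-factor to land in $C_c^\infty(\mathbb{M})$ (the sphere is already compact, so $f_S \otimes 1 \in C_c^\infty(\mathbb{M})$ directly since $\mathbb{M} = \mathbb{R}^d \times \mathbb{S}$ and $\mathbb{S}$ is compact), whence $f_S \otimes 1 \in \D$ and $A(f_S\otimes 1) = -\omega\cdot\nabla_x f_S$ literally. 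Thus $Pf = f_S\otimes 1 - (f,1)_\H\cdot 1 \in \D \subset D(A)$ and $APf = -\omega\cdot\nabla_x f_S$, giving \eqref{Formula_AP}.

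Finally, for $AP(\D) \subset D(A)$: by the previous step $APf = -\omega\cdot\nabla_x f_S$ for $f\in\D$, and since $f_S \in C_c^\infty(\mathbb{R}^d)$, each component $\partial_{x_i} f_S \in C_c^\infty(\mathbb{R}^d)$, so $-\omega\cdot\nabla_x f_S = -\sum_{i=1}^d \omega_i\,\partial_{x_i}f_S(x)$ is a finite sum of products of the smooth compactly supported $x$-functions $\partial_{x_i}f_S$ with the smooth bounded $\omega$-functions $\omega \mapsto \omega_i$; hence $APf \in C_c^\infty(\mathbb{M}) = \D \subset D(A)$. This completes all four inclusions and the formula. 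I expect the only genuinely delicate point to be the passage from the formal/pointwise identities to statements about the \emph{closed} extensions $(S,D(S))$ and $(A,D(A))$ — one must be careful that $f_S \otimes 1$ really lies in the core $\D$ (which it does precisely because $\mathbb{S}$ is compact, so no $\omega$-cutoff is needed and $C^\infty$ functions of $x$ with compact $x$-support are automatically in $C_c^\infty(\mathbb{M})$), and that constants lie in the domains of the closures with the expected (zero) action; everything else is a routine computation with the explicit formulas for $P$, $S$ and $A$.
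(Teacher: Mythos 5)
Your overall strategy matches the paper's: reduce everything to the explicit formulas for $P$, $S$ and $A$, use closedness of $(S,D(S))$ and $(A,D(A))$, and handle the constant function $1$ via cutoffs. One slip needs fixing, though: you conclude $Pf = f_S\otimes 1 - (f,1)_\H\cdot 1 \in \D$, but this is false in general, because the constant function $1$ does \emph{not} belong to $\D = C_c^\infty(\mathbb{M})$ — although $\mathbb{S}$ is compact, $\mathbb{R}^d$ is not, so $1$ has no compact support on $\mathbb{M} = \mathbb{R}^d\times\mathbb{S}$. What you actually get is $Pf \in D(A)$ (not $\D$), and for this you genuinely need the separate fact $1 \in D(A)$, $A1 = 0$, which your earlier parenthetical correctly anticipates. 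Two further imprecisions in that parenthetical: the cutoff argument for $A$ is not literally ``the same closure argument'' as for $S$, since $S\varphi_n = 0$ identically whereas $A\varphi_n = -\omega\cdot\nabla_x\varphi_n$ is nonzero and one must estimate $\sup_x|\nabla\varphi_n(x)| \leq C/n$ and invoke dominated convergence to get $A\varphi_n \to 0$ in $\H$ (this is exactly what the paper does); and ``because $A$ is a derivation'' is only a formal, pointwise statement and does not by itself put $1$ into the domain of the \emph{closed} extension. With these corrections your proof coincides with the paper's.
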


\begin{proof}
Note that the range of $P$ may naturally be identified with $L^2(e^{-V}\mathrm{d}x)$. So choose $f \in L^2(e^{-V}\mathrm{d}x)$. We have that $C_c^\infty(\mathbb{R}^d)$ is dense in $L^2(e^{-V}\mathrm{d}x)$. Thus there exists $f_n \in C_c^\infty(\mathbb{R}^d)$, $n \in \mathbb{N}$, such that $f_n \rightarrow f$ in $L^2(e^{-V}\mathrm{d}x)$ as $n \rightarrow \infty$. Now identify all $f_n$, $n \in \mathbb{N}$, and $f$ again with elements from $\H$. Then $f_n \in \D$ and $Sf_n=0$ for each $n \in \mathbb{N}$ as well as $f_n \rightarrow f$ in $\H$ as $n \rightarrow \infty$. And since $(S,D(S))$ is closed, this shows $f \in D(S)$ and $Sf=0$.

To show the second part, let $f \in \D$. Choose an open ball $B_r(0)$ around $0$ with radius $r \in (0,\infty)$ large enough such that $\text{supp}(f) \subset B_r(0) \times \mathbb{S}$. The support of $f_S$ is then contained in $B_r(0)$ and hence $f_S \in C^\infty_c(\mathbb{R}^d)$. Identifying any element of $C^\infty_c(\mathbb{R}^d)$ again with an element from $\D$ we conclude
\begin{align*}
f_S \in \D ~\mbox{ and }~A P_S f \,(x,\omega) = - \omega \cdot \nabla_x \,f_S \,(x),~(x,\omega) \in \mathbb{M}.
\end{align*}
Next we show that $1 \in D(A)$ and $A1=0$. Therefore, let $\varphi \in C_c^\infty(\mathbb{R}^d)$ be a cut-off function with $0 \leq \varphi \leq 1$ such that $\varphi=1$ on $B_1(0)$ and $\varphi =0$ outside $B_2(0)$. Define $\varphi_n$ as $\varphi_n(x)=\varphi(\frac{1}{n} x)$ for each $x \in \mathbb{R}^d$ and all $n \in \mathbb{N}$. Note that 
\begin{align*}
\left| \nabla \varphi_n(x) \right| \leq \frac{1}{n} \, C,~x \in \mathbb{R}^d,~n \in \mathbb{N},
\end{align*}
where $C=\sup_{ y \in \mathbb{R}^d } |\nabla \varphi (y) | < \infty.$ Canonically, we have $\varphi_n \in \D$ for all $n \in \mathbb{N}$ and by the construction of $(\varphi_n)_{n \in \mathbb{N}}$ we get
\begin{align*}
 A\varphi_n = - \omega \cdot \nabla_x \varphi_n \rightarrow  0 ~\mbox{ as }~n \rightarrow \infty
\end{align*}
in $\H$ by using dominated convergence. Clearly, also $\varphi_n \rightarrow 1$ in $\H$ as $n \rightarrow \infty$. Hence, as desired, $1 \in D(A)$ and $A1=0$ since $(A,D(A))$ is closed. Summarizing, we obtain $Pf \in D(A)$ and the representation
\begin{align*}
APf  = - \omega \cdot \nabla_x \,f_S
\end{align*}
for each $f \in \D$, in particular, $APf \in \D \subset D(A)$. This completes the proof.
\end{proof}

Next, we prove the conservativity condition in (D7) which is now almost obvious by the previous arguments.

\begin{Lm}
Let $V:\mathbb{R}^d \rightarrow \mathbb{R}$ be locally Lipschitz continuous and assume that $e^{-V} \mathrm{d}x$ is a probability measure on $(\mathbb{R}^d,\mathcal{B}(\mathbb{R}^d))$. It holds $1 \in D(L)$ and $L1=0$.
\end{Lm}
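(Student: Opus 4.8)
The plan is to recycle the cut-off functions $(\varphi_n)_{n\in\mathbb{N}}$ already constructed in the proof of Lemma~\ref{Lm_verification_D5_spherical_velocity_Langevin}, since essentially all of the analytic content needed here is contained there. Recall that each $\varphi_n\in\D=C_c^\infty(\mathbb{M})$ depends only on the $x$-variable, satisfies $0\le\varphi_n\le 1$ with $\varphi_n\to 1$ pointwise, and obeys $|\nabla_x\varphi_n|\le C/n$ for a finite constant $C$. Because $\mu=e^{-V}\mathrm{d}x\otimes\nu$ is a probability measure, the constant function $1$ lies in $\H=L^2(\mathbb{M},\mu)$, and dominated convergence (with dominant $1\in\H$) gives $\varphi_n\to 1$ in $\H$.

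First I would compute $L\varphi_n$ on the core $\D$. Since $\varphi_n$ is independent of $\omega$, one has $\nabla_\omega\varphi_n=0$, hence $\Delta_{\mathbb{S}}\varphi_n=0$ and $\text{grad}_{\mathbb{S}}\Phi\cdot\nabla_\omega\varphi_n=0$; therefore $S\varphi_n=0$ and $A\varphi_n=-\,\omega\cdot\nabla_x\varphi_n$, so that $L\varphi_n=S\varphi_n-A\varphi_n=\omega\cdot\nabla_x\varphi_n$. This is exactly the term already shown in the proof of Lemma~\ref{Lm_verification_D5_spherical_velocity_Langevin} to converge to $0$ in $\H$; more precisely, using $|\omega|=1$ on $\mathbb{S}$ and the gradient bound one obtains $\|L\varphi_n\|_{\H}^2=\int_{\mathbb{M}}|\omega\cdot\nabla_x\varphi_n|^2\,\mathrm{d}\mu\le (C^2/n^2)\,\mu(\mathbb{M})=C^2/n^2\to 0$. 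Since $(L,D(L))$ is by construction the closure of $(L,\D)$ in $\H$ and thus a closed operator, the two convergences $\varphi_n\to 1$ and $L\varphi_n\to 0$ in $\H$ force $1\in D(L)$ and $L1=0$.

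There is no genuine obstacle; the only point deserving a word of care is the finiteness $\mu(\mathbb{M})=1$, which is what lets the constant function $1$ belong to the $L^2$-space in the first place and what keeps $\|L\varphi_n\|_{\H}$ small even though $\text{supp}(\varphi_n)$ grows — so this is precisely where the standing assumption that $e^{-V}\mathrm{d}x$ is a probability measure enters. (One may alternatively phrase the argument as $1\in D(S)\cap D(A)$ with $S1=0$ and $A1=0$, the latter being the statement already established inside the proof of Lemma~\ref{Lm_verification_D5_spherical_velocity_Langevin}, together with $L_{|\D}=S-A$ and the fact that the closed operator $L$ extends $S-A$ on $\D$.)
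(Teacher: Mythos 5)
Your proof is correct and follows essentially the same route as the paper: it reuses the cut-off sequence $(\varphi_n)_{n\in\mathbb{N}}$ from the proof of Lemma~\ref{Lm_verification_D5_spherical_velocity_Langevin}, notes that $\varphi_n\to 1$ and $L\varphi_n\to 0$ in $\H$, and invokes closedness of $(L,D(L))$. The only difference is that you spell out the computation $L\varphi_n=\omega\cdot\nabla_x\varphi_n$ explicitly (the paper writes $L\varphi_n=A\varphi_n$, which has a harmless sign slip since $S\varphi_n=0$ gives $L\varphi_n=-A\varphi_n$, but the conclusion is unaffected).
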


\begin{proof}
Let $(\varphi_n)_{n \in \mathbb{N}}$ be as in the proof of Lemma \ref{Lm_verification_D5_spherical_velocity_Langevin}. Note that $L\varphi_n=A\varphi_n$ for each $n \in \mathbb{N}$. Hence $\varphi_n \rightarrow 1$ and $L \varphi_n \rightarrow 0$ in $\H$ as $n \rightarrow \infty$. The claim follows since $(L,D(L))$ is closed.
\end{proof}

Summarizing, conditions (D1) and (D3)-(D7) are fulfilled. The hardest part is to show that $(L,D(L))$ generates a $C_0$-semigroup. This is done in Section \ref{Section_Construction_Semigroup} separately, see Theorem \ref{Thm_Essential_mdissipativity_smoothcase} and Theorem \ref{Thm_essential_mdissipativity_sphericalvelocityLangevin_locLipschitz} therein.

\subsection{The conditions (H1)-(H4)} \label{Section_(H1)_(H4)} In this section we verify Assumptions (H1)-(H4). The calculations are similar as the algebraic ones from \cite[Sec.~6]{GKMS12}. But first we introduce the necessary conditions on the potential $V$ needed later on. For criteria on Poincar\'e inequalities, consider e.g.~\cite{BBCG08} or \cite[A.~19]{Vil09}.
\begin{Ass}
$ $
\begin{itemize}
\item[(C1)] The potential $V:\mathbb{R}^d \rightarrow \mathbb{R}$ is bounded from below, satisfies $V \in C^{2}(\mathbb{R}^d)$ and $e^{-V} \mathrm{d}x$ is a probability measure on $(\mathbb{R}^d,\mathcal{B}(\mathbb{R}^d))$.
\item[(C2)] The probability measure $e^{-V}\mathrm{d}x$ satisfies a Poincar\'e inequality of the form
\begin{align*}
\left\|\nabla f \right\|^2_{L^2(e^{-V}\mathrm{d}x)} \geq \Lambda  \, \left\| f - \left(f,1\right)_{L^2(e^{-V}\mathrm{d}x)} \,\right\|^2_{L^2(e^{-V}\mathrm{d}x)}
\end{align*}
for some $\Lambda \in (0,\infty)$ and all $f \in C_c^\infty(\mathbb{R}^d)$.
\item[(C3)] There exists some constant $c < \infty$ such that
\begin{align*}
\left| \nabla^2 V (x) \right| \leq c \left( 1+ \left| \nabla V(x) \right|\right) \mbox{ for all } x \in \mathbb{R}^d.
\end{align*}
\end{itemize}
\end{Ass}
Here $|T|$ denotes the Frobenius norm of some real-valued matrix $T$. We note that (C2) is necessary to show (H3) and (C2) together with (C3) are indispensable to prove (H4). First we start with the verification of (H1). 

\begin{Pp} Let $V:\mathbb{R}^d \rightarrow \mathbb{R}$ be locally Lipschitz continuous and let $e^{-V} \mathrm{d}x$ be a probability measure on $(\mathbb{R}^d,\mathcal{B}(\mathbb{R}^d))$. Then (H1) holds.
\end{Pp}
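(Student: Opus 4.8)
The plan is to verify the identity $PAP_{|\D}=0$ directly, using the explicit formula \eqref{Formula_AP} for $AP$ established in Lemma \ref{Lm_verification_D5_spherical_velocity_Langevin}. Recall that for $f \in \D = C_c^\infty(\mathbb{M})$ we have $APf = -\omega \cdot \nabla_x f_S$, where $f_S = P_S f = \int_{\mathbb{S}} f \, \mathrm{d}\nu$ depends only on the $x$-variable and lies in $C_c^\infty(\mathbb{R}^d)$. So it suffices to show that $P$ annihilates the function $(x,\omega) \mapsto \omega \cdot \nabla_x f_S(x)$. Since $P g = P_S g - (g,1)_\H$ for $g \in \H$, and $P_S$ is integration over the sphere against $\nu$, I would compute
\begin{align*}
P_S\big(\omega \cdot \nabla_x f_S\big)(x) = \int_{\mathbb{S}} \big(\nabla_x f_S(x), \omega\big)_{\text{euc}} \, \mathrm{d}\nu(\omega) = 0
\end{align*}
by the first formula in Lemma \ref{Gauss_formula_sphere} (with $z = \nabla_x f_S(x)$), applied for each fixed $x$. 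The constant term $(\omega \cdot \nabla_x f_S, 1)_\H$ then also vanishes, since by Fubini it equals $\int_{\mathbb{R}^d} e^{-V} P_S(\omega\cdot\nabla_x f_S)\,\mathrm{d}x = 0$. Hence $PAPf = -P(\omega\cdot\nabla_x f_S) = 0$ for all $f \in \D$, which is exactly (H1).

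The only mild subtlety is bookkeeping about domains: one must note that $APf$ is genuinely an element of $\H$ on which $P$ may be applied, but this is already guaranteed by Lemma \ref{Lm_verification_D5_spherical_velocity_Langevin}, which states $APf \in \D \subset D(A) \subset \H$. There is no real obstacle here — the statement is a direct consequence of the spherical-symmetry identity $\int_{\mathbb{S}} (z,\omega)_{\text{euc}}\,\mathrm{d}\nu(\omega)=0$, reflecting the fact that the velocity average of a centered velocity field is zero. This mirrors the role of (H1) in the classical kinetic Fokker–Planck setting of \cite{DMS10}, where the analogous computation is that the transport term $v\cdot\nabla_x$ has vanishing local velocity average against the Gaussian.
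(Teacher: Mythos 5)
Your proof is correct and follows essentially the same route as the paper: both use Formula \eqref{Formula_AP} to write $APf=-\omega\cdot\nabla_x f_S$ and then invoke the Gaussian integral identity from Lemma \ref{Gauss_formula_sphere} to conclude $P_S APf=0$ and hence $(APf,1)_\H=0$, giving $PAPf=0$ on $\D$.
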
 

\begin{proof}
Let $f \in \D$. Then by Formula \eqref{Formula_AP} we have
\begin{align*}
A P f= - \omega \cdot \nabla_x f_S~\mbox{ where }~f_S=P_S f =\int_\mathbb{S} f \, \mathrm{d}\nu.
\end{align*}
By the Gaussian integral formula, see Lemma \ref{Gauss_formula_sphere}, we conclude $P_S A Pf =0$. Thus also $\left(A Pf,1\right)_\H=\left(P_S A Pf,1\right)_{L^2(e^{-V} \mathrm{d}x)}=0$. Hence $P A P = 0$ on $\D$.
\end{proof}

\begin{Pp}
Let $V:\mathbb{R}^d \rightarrow \mathbb{R}$ be locally Lipschitz continuous. Then condition (H2) holds with $\Lambda_m=\frac{1}{2} \sigma^2 (d-1)$.
\end{Pp}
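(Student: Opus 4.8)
The plan is to reduce the inequality, via Fubini's theorem and Green's formula on $\mathbb{S}$, to the classical spectral gap estimate for the Laplace--Beltrami operator on $\mathbb{S}=\mathbb{S}^{d-1}$, and then to track the constant through the reduction.

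First I would compute $-\left(Sf,f\right)_\H$ for $f\in\D=C_c^\infty(\mathbb{M})$. Since $S=\frac{\sigma^2}{2}\Delta_{\mathbb{S}}$, applying the integration-by-parts identity from the proof of Lemma \ref{Lm_properties_of_L}(i) with both arguments equal to $f$ gives
\begin{align*}
-\left(Sf,f\right)_\H \;=\; \frac{\sigma^2}{2}\int_{\mathbb{R}^d} e^{-V(x)}\left(\int_{\mathbb{S}}\big|\text{grad}_{\mathbb{S}}f(x,\cdot)\big|^2_{T\mathbb{S}}\,\mathrm{d}\nu\right)\mathrm{d}x .
\end{align*}
On the other hand, writing $f_S(x)=\int_{\mathbb{S}} f(x,\cdot)\,\mathrm{d}\nu$, one has $\|(I-P_S)f\|_\H^2=\int_{\mathbb{R}^d}e^{-V(x)}\left(\int_{\mathbb{S}}|f(x,\omega)-f_S(x)|^2\,\mathrm{d}\nu(\omega)\right)\mathrm{d}x$, and for each fixed $x$ the number $f_S(x)$ is exactly the $\nu$-mean of $g:=f(x,\cdot)\in C^\infty(\mathbb{S})$. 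Hence it suffices to establish the pointwise-in-$x$ Poincar\'e inequality on the sphere,
\begin{align*}
\int_{\mathbb{S}}\big|\text{grad}_{\mathbb{S}}g\big|^2_{T\mathbb{S}}\,\mathrm{d}\nu \;\geq\; (d-1)\int_{\mathbb{S}}\left|\,g-\int_{\mathbb{S}} g\,\mathrm{d}\nu\,\right|^2\mathrm{d}\nu,\qquad g\in C^\infty(\mathbb{S}),
\end{align*}
and then multiply by $e^{-V(x)}\geq 0$ and integrate in $x$; this yields (H2) with $\Lambda_m=\frac{1}{2}\sigma^2(d-1)$.

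The only substantive point is thus the spherical Poincar\'e inequality with the \emph{sharp} constant $d-1$. By Green's formula on $\mathbb{S}$ (as used in the proof of Lemma \ref{Lm_phi_sphere}) its left-hand side equals $\left(-\Delta_{\mathbb{S}}g,g\right)_{L^2(\nu)}$, so the statement is precisely that the smallest nonzero eigenvalue of $-\Delta_{\mathbb{S}}$ on $L^2(\mathbb{S},\nu)$ is $d-1$. That $d-1$ is attained is recorded in Lemma \ref{Lm_formula_sphere_strong_feller}: the coordinate functions $\omega\mapsto\omega_i$ satisfy $\Delta_{\mathbb{S}}\omega_i=-(d-1)\omega_i$ and, by Lemma \ref{Gauss_formula_sphere}, are $\nu$-orthogonal to the constants. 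The remaining half — absence of spectrum of $-\Delta_{\mathbb{S}}$ in $(0,d-1)$ — is the classical fact that the degree-$k$ spherical harmonics form the eigenspaces of $-\Delta_{\mathbb{S}}$ with eigenvalues $k(k+d-2)$, which strictly increase in $k$; see e.g.\ \cite[A.19]{Vil09}, or alternatively invoke Lichnerowicz' estimate, $\mathbb{S}^{d-1}$ having constant Ricci curvature $d-2$. I expect the sharpness of this constant to be the one real obstacle; the rest (Green, Fubini, nonnegativity of $e^{-V}$) is routine. Note that for Theorem \ref{Thm_Hypocoercivity} any $\Lambda_m>0$ would suffice, but the explicit value $\frac{1}{2}\sigma^2(d-1)$ is what makes the decay rate computable.
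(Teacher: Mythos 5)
Your proposal is correct and takes essentially the same approach as the paper: both reduce (H2), via Fubini and Green's formula on $\mathbb{S}$, to the spherical Poincar\'e inequality with sharp constant $d-1$. The paper simply cites Beckner \cite[Theo.~2]{Be89} for that inequality, whereas you justify the constant via the spectrum of $-\Delta_{\mathbb{S}}$ (spherical harmonics, or Lichnerowicz), which is an equivalent and equally valid derivation.
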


\begin{proof}
This just follows by the Poincar\'e inequality on $\mathbb{S}$ (as in \cite[Sec.~6]{GKMS12} or \cite{DKMS11} respectively). Indeed, for all $f \in C^\infty(\mathbb{S})$ we have
\begin{align*}
\frac{1}{d-1} \int_{\mathbb{S}}  \left( \text{grad}_{\mathbb{S}} f, \text{grad}_{\mathbb{S}} f \right)_{T\mathbb{S}}  \, \mathrm{d}\nu \geq \int_{\mathbb{S}}  f^2 \, \mathrm{d}\nu - \left( \int_{\mathbb{S}} f \, \mathrm{d}\nu \right)^2,  
\end{align*}
see \cite[Theo.~2]{Be89}. This implies
\begin{align*} 
- \left( S f, f \right)_{\H} \geq \frac{1}{2} \sigma^2 (d-1)  \left\| (I-P_S)f \right\|_{\H}^2,~f \in \D.
\end{align*}
\end{proof}

In order to prove (H3), we aim to apply Corollary \ref{Cor_H3_equivalent}. Especially, we have to show  that $(I-PA^2P)(\D)$ is dense in $\H$. Before proving this, we first calculate the representation of $I-PA^2P$ on $\D$. The next proposition is satisfied in particular under the assumption that $V$ fulfills (C1). Let us therefore already assume it. By using the formulas from Section \ref{Section_Formulas_Notations} we get
\begin{align} \label{formula_AAP}
A^2 P f &= - A \,\left( \omega, \nabla f_S\right)_{\text{euc}} = \left( \omega, \nabla^2 f_S \, \omega \right)_{\text{euc}} -  \frac{1}{d-1} \,\left( \left(I-\omega\otimes \omega\right) \nabla V, \,\nabla f_S \right)_{\text{euc}}.
\end{align}
Hence again by the Gaussian integral formula, see Lemma \ref{Gauss_formula_sphere}, it follows easily (see also the corresponding computation in \cite[Sec.~6]{GKMS12} and in \cite{DKMS11}) that
\begin{align*} 
P_S \,A^2 P f = \frac{1}{d} \,\Delta f_S - \frac{1}{d}  \,\nabla V \cdot \nabla f_S.
\end{align*}
For the moment, consider the operator $(G,C_c^\infty(\mathbb{R}^d))$ defined by $G=\frac{1}{d} \, \Delta -  \frac{1}{d}\,\nabla V \cdot \nabla$ on $C_c^\infty(\mathbb{R}^d)$. Then for each $h \in C^\infty_c(\mathbb{R}^d)$ and $g \in C^\infty(\mathbb{R}^d)$ it holds
\begin{align*}
\left( Gh, g \right)_{L^2(e^{-V} \mathrm{d}x)} = - \frac{1}{d} \, \int_{\mathbb{R}^d} \nabla h \cdot \nabla g ~e^{-V} \mathrm{d}x.
\end{align*}
In particular, we have $\left( Gh, 1\right)_{L^2(e^{-V} \mathrm{d}x)}=0$. Thus, since $f_S \in C^\infty_c(\mathbb{R}^d)$, we conclude
\begin{align*}
\left(A^2 P f, 1\right)_\H= \left(Gf_S,1 \right)_{L^2(e^{-V} \mathrm{d}x)}=0.
\end{align*}
Altogether, for each $f \in \D$, we obtain the formula
\begin{align} \label{formula_PAAP}
P A^2 P f = \frac{1}{d} \, \Delta f_S - \frac{1}{d} \, \nabla V \cdot \nabla f_S.
\end{align}
Consequently, we get
\begin{align*}
(I- P A^2 P) f =  f - \frac{1}{d} \,\Delta f_S + \frac{1}{d}  \,\nabla V \cdot \nabla f_S ~~\mbox{ for all }f \in \D.
\end{align*}
Now we prove the desired upcoming proposition.

\begin{Pp} \label{density_of_PAAPD}
Let $V:\mathbb{R}^d \rightarrow \mathbb{R}$ satisfy (C1). Then $(I-P A^2 P)(\D)$ is dense in $\H$. In other words, $(I-PA^2P,\D)$ is essentially m-dissipative on $\H$.
\end{Pp}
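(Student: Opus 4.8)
The plan is to peel off the trivial (velocity-dependent) part of $\H$ and reduce the statement to essential self-adjointness of a single operator on $\mathbb{R}^d$, which I then prove by an elliptic-regularity-plus-cutoff argument exploiting that $e^{-V}\mathrm{d}x$ is a \emph{finite} measure. Recall from the computation preceding the proposition that for $f\in\D=C_c^\infty(\mathbb{M})$ one has $(I-PA^2P)f=f-Gf_S$, where $f_S=P_Sf=\int_{\mathbb{S}}f\,\mathrm{d}\nu$ and $G:=\tfrac1d(\Delta-\nabla V\cdot\nabla)=\tfrac1d e^{V}\,\mathrm{div}(e^{-V}\nabla\,\cdot\,)$ acts only on the $x$-variable. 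Split $\H=L^2(\mathbb{M},\mu)$ orthogonally into the functions constant in $\omega$ (identified with $L^2(e^{-V}\mathrm{d}x)$) and the functions with vanishing $\nu$-mean. Writing $f=f_S+(I-P_S)f$ gives $(I-PA^2P)f=(I-G)f_S+(I-P_S)f$ with the two summands orthogonal. As $f$ runs through $C_c^\infty(\mathbb{M})$, $f_S$ runs through all of $C_c^\infty(\mathbb{R}^d)$ (take $f$ constant in $\omega$) while $(I-P_S)f$ runs through a dense subset of $\{g\in\H:P_Sg=0\}$ (approximate such a $g$ by elements of $C_c^\infty(\mathbb{M})$ and apply $I-P_S$, using that $(I-P_S)$ maps $C_c^\infty(\mathbb{M})$ into itself). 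Hence density of $(I-PA^2P)(\D)$ in $\H$ is equivalent to density of $(I-G)\big(C_c^\infty(\mathbb{R}^d)\big)$ in $L^2(e^{-V}\mathrm{d}x)$, i.e.\ to essential m-dissipativity of $(G,C_c^\infty(\mathbb{R}^d))$ on $L^2(e^{-V}\mathrm{d}x)$ (equivalently, since this operator is symmetric and nonpositive by Lemma \ref{Lm_properties_of_L}, essential selfadjointness).

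To prove the latter, let $h\in L^2(e^{-V}\mathrm{d}x)$ be orthogonal to $(I-G)\big(C_c^\infty(\mathbb{R}^d)\big)$; the goal is $h=0$. Unravelling the orthogonality relation, $h$ is a distributional solution on $\mathbb{R}^d$ of the uniformly (locally) elliptic equation $\mathrm{div}(e^{-V}\nabla h)=d\,h\,e^{-V}$. Since (C1) gives $V\in C^2$, the coefficient $e^{-V}$ is $C^1$ and on every compact set bounded away from $0$ and $\infty$; as $h\in L^2_{\mathrm{loc}}(\mathbb{R}^d)$ and the right-hand side is in $L^2_{\mathrm{loc}}(\mathbb{R}^d)$, interior elliptic regularity yields $h\in H^1_{\mathrm{loc}}(\mathbb{R}^d)$ (indeed $h$ is continuous), so that $\varphi^2 h$ is an admissible test function for every $\varphi\in C_c^\infty(\mathbb{R}^d)$.

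Now I run the standard cutoff estimate with the functions $(\varphi_n)_{n\in\mathbb{N}}$ from the proof of Lemma \ref{Lm_verification_D5_spherical_velocity_Langevin}, so $\varphi_n=1$ on $B_n(0)$, $\mathrm{supp}\,\varphi_n\subset B_{2n}(0)$ and $|\nabla\varphi_n|\le C/n$. Testing the weak equation against $\varphi_n^2 h$ and using $2ab\le a^2+b^2$, the term $\int e^{-V}\varphi_n^2|\nabla h|^2\,\mathrm{d}x$ cancels and one is left with
\begin{align*}
d\int_{\mathbb{R}^d} h^2\varphi_n^2\,e^{-V}\,\mathrm{d}x \;\le\; \int_{\mathbb{R}^d} h^2|\nabla\varphi_n|^2\,e^{-V}\,\mathrm{d}x \;\le\; \frac{C^2}{n^2}\,\big\|h\big\|^2_{L^2(e^{-V}\mathrm{d}x)}.
\end{align*}
Letting $n\to\infty$, the left-hand side converges to $d\,\|h\|^2_{L^2(e^{-V}\mathrm{d}x)}$ by monotone convergence, while the right-hand side tends to $0$ precisely because $e^{-V}\mathrm{d}x$ is a \emph{probability} (hence finite) measure; therefore $h=0$, which completes the reduction step and the proof. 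The only genuinely delicate point is the regularity claim $h\in H^1_{\mathrm{loc}}$ from a merely $L^2_{\mathrm{loc}}$ distributional solution of the divergence-form equation — this is exactly where $V\in C^2$ is used and the only place where no growth condition on $V$ is needed; alternatively, essential selfadjointness of $(G,C_c^\infty(\mathbb{R}^d))$ on the finite measure space $L^2(e^{-V}\mathrm{d}x)$ may be invoked from the literature.
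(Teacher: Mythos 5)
Your reduction and your direct proof of essential selfadjointness of $G$ are both correct, and the overall structure of the argument matches the paper's: reduce density of $(I-PA^2P)(\D)$ in $\H$ to density of $(I-G)\big(C_c^\infty(\mathbb{R}^d)\big)$ in $L^2(e^{-V}\mathrm{d}x)$, i.e.\ to essential m-dissipativity (equivalently, essential selfadjointness) of the Dirichlet operator $(G,C_c^\infty(\mathbb{R}^d))=(\tfrac1d(\Delta-\nabla V\cdot\nabla),C_c^\infty(\mathbb{R}^d))$. The difference is how the key auxiliary fact is settled. The paper simply quotes \cite[Theo.~7]{BKR97} for essential selfadjointness of $G$, and then runs the orthogonality argument in a linear form: plug in $f\in C_c^\infty(\mathbb{R}^d)\subset\D$ to deduce $g_S=0$, observe that this kills $(PA^2Pf,g)_\H$ for all $f\in\D$, and conclude $g\perp\D$, hence $g=0$. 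You instead make the orthogonal decomposition $\H=L^2(e^{-V}\mathrm{d}x)\oplus\{P_S=0\}$ and $\D\simeq C_c^\infty(\mathbb{R}^d)\oplus\{f\in\D: P_Sf=0\}$ explicit, and you prove essential selfadjointness of $G$ from scratch via interior elliptic regularity (to get $h\in H^1_{\mathrm{loc}}$) combined with the cutoff estimate that exploits finiteness of $e^{-V}\mathrm{d}x$. This is essentially reproducing the relevant core of \cite{BKR97} in the smooth finite-measure setting. What the paper's citation buys is robustness: \cite{BKR97} gives essential selfadjointness of $G$ under much weaker regularity/integrability assumptions on $V$ than $V\in C^2$, so the proof would survive weakening (C1). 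What your self-contained argument buys is transparency: it shows precisely which hypotheses enter (regularity of $V$ for the $H^1_{\mathrm{loc}}$ bootstrap, finiteness of $e^{-V}\mathrm{d}x$ to kill the boundary term as $n\to\infty$), and it avoids an external reference. Two small points of hygiene: the parenthetical claim that $h$ is continuous is not needed (and, as stated, requires more bootstrapping than you do for general $d$) — $h\in H^1_{\mathrm{loc}}$ is all the cutoff argument uses; and the step from $h\in L^2_{\mathrm{loc}}$, $\mathrm{div}(e^{-V}\nabla h)=dhe^{-V}$ in $\mathcal{D}'$, to $h\in H^1_{\mathrm{loc}}$ deserves a sentence (e.g.\ write $\tilde h=he^{-V}$, observe $\Delta\tilde h=d\tilde h-\mathrm{div}(\tilde h\nabla V)\in H^{-1}_{\mathrm{loc}}$ using $\nabla V\in C^1$, and invoke $\Delta$-regularity).
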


\begin{proof}
Let $(G,C_c^\infty(\mathbb{R}^d))$ be as defined above. By \cite[Theo.~7]{BKR97}, it follows that $(G,C_c^\infty(\mathbb{R}^d))$ is essentially selfadjoint on $L^2(e^{-V} \mathrm{d}x)$. Hence $(G,C_c^\infty(\mathbb{R}^d))$ is also essentially m-dissipative on $L^2(e^{-V} \mathrm{d}x)$ (use e.g.~\cite[Ch.~1,~Cor.~4.4]{Paz83}). Now let $g \in \H$ such that
\begin{align} \label{eq_verification_core_H3}
((I-P A^2 P)f,g)_\H=0 ~~\mbox{ for all } f \in \D.
\end{align}
We have to show that $g=0$. Note that \eqref{eq_verification_core_H3} implies
\begin{align*} 
((I-G)f,g_S)_{L^2(e^{-V} \mathrm{d}x)}=0 ~~\mbox{ for all } f \in C_c^\infty(\mathbb{R}^d).
\end{align*}
Hence $g_S=0$ in $L^2(e^{-V} \mathrm{d}x)$ since $(I-G)(C_c^\infty(\mathbb{R}^d))$ is dense in $L^2(e^{-V} \mathrm{d}x)$. Thus for each $f \in \D$ we have 
\begin{align*}
(P A^2 Pf,g)_\H=\left(Gf_S,g_S\right)_{L^2(e^{-V} \mathrm{d}x)}=0.
\end{align*}
Consequently, \eqref{eq_verification_core_H3} yields $(f,g)_\H=0$ for all $f \in \D$. Hence $g=0$ as desired.
\end{proof}

Now we can prove (H3).

\begin{Pp} Assume that $V:\mathbb{R}^d \rightarrow \mathbb{R}$ satisfies (C1) and (C2). Then (H3) holds with $\Lambda_M=\frac{\Lambda}{d}$.
\end{Pp}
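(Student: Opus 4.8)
The plan is simply to invoke Corollary \ref{Cor_H3_equivalent}. The essential m-dissipativity of $(PA^2P,\D)$ required there is already established in Proposition \ref{density_of_PAAPD} under (C1), so it only remains to check the pointwise inequality
\begin{align*}
\|APf\|_\H^2 \geq \frac{\Lambda}{d}\,\|Pf\|_\H^2 \quad \text{for all } f \in \D,
\end{align*}
and then Corollary \ref{Cor_H3_equivalent} automatically upgrades it to the genuine macroscopic coercivity statement (H3) on the full domain $D((AP)^*(AP))$ with $\Lambda_M = \Lambda/d$.

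To verify the inequality on $\D$, I would start from Formula \eqref{Formula_AP}, i.e.\ $APf = -\omega\cdot\nabla_x f_S$ with $f_S = P_S f$, which (as noted in the proof of Lemma \ref{Lm_verification_D5_spherical_velocity_Langevin}) lies in $C_c^\infty(\mathbb{R}^d)$. Writing $\|APf\|_\H^2 = \int_{\mathbb{M}}(\omega\cdot\nabla_x f_S)^2\,\mathrm{d}\mu$ as an iterated integral over $\mathbb{R}^d$ and $\mathbb{S}$ by Fubini, the inner spherical integration is computed by the third identity of Lemma \ref{Gauss_formula_sphere} with $z_1 = z_2 = \nabla_x f_S(x)$, giving $\int_{\mathbb{S}}(\omega\cdot\nabla_x f_S)^2\,\mathrm{d}\nu = \frac{1}{d}|\nabla_x f_S|^2$. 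Hence
\begin{align*}
\|APf\|_\H^2 = \frac{1}{d}\int_{\mathbb{R}^d}|\nabla f_S|^2\,e^{-V}\mathrm{d}x = \frac{1}{d}\,\|\nabla f_S\|_{L^2(e^{-V}\mathrm{d}x)}^2.
\end{align*}

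Since $f_S \in C_c^\infty(\mathbb{R}^d)$, the Poincar\'e inequality (C2) applies directly to $f_S$; combining it with the elementary identity $(f_S,1)_{L^2(e^{-V}\mathrm{d}x)} = (f,1)_\H$, the relation $Pf = f_S - (f,1)_\H$ and the isometry $\|Pf\|_{L^2(e^{-V}\mathrm{d}x)} = \|Pf\|_\H$, one obtains
\begin{align*}
\|APf\|_\H^2 \geq \frac{\Lambda}{d}\,\big\|\,f_S - (f,1)_\H\,\big\|_{L^2(e^{-V}\mathrm{d}x)}^2 = \frac{\Lambda}{d}\,\|Pf\|_\H^2
\end{align*}
for every $f \in \D$, and the proof is concluded by Corollary \ref{Cor_H3_equivalent}. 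There is no real obstacle here: the delicate point, namely passing from an estimate on the core $\D$ to the domain-level statement of (H3), has been isolated into the essential m-dissipativity of $(PA^2P,\D)$ and already handled in Proposition \ref{density_of_PAAPD}; the only items needing a little care are that $f_S$ genuinely belongs to $C_c^\infty(\mathbb{R}^d)$ so that (C2) is literally applicable, and the bookkeeping identifying the two mean-zero norms $\|Pf\|_\H$ and $\|f_S - (f,1)_\H\|_{L^2(e^{-V}\mathrm{d}x)}$.
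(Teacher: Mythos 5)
Your argument is correct and matches the paper's proof step for step: both reduce the verification of (H3) to the core $\D$ via Corollary \ref{Cor_H3_equivalent} and Proposition \ref{density_of_PAAPD}, and both obtain the inequality on $\D$ by combining Formula \eqref{Formula_AP} with the Gaussian integral formula of Lemma \ref{Gauss_formula_sphere} and the Poincar\'e inequality (C2) applied to $f_S \in C_c^\infty(\mathbb{R}^d)$. The small bookkeeping points you flag (that $f_S$ is compactly supported and smooth, and that $\|Pf\|_{L^2(e^{-V}\mathrm{d}x)} = \|Pf\|_\H$) are exactly the ones the paper uses implicitly.
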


\begin{proof}
Let $f \in \D$. By the Gaussian integral formula, see Lemma \eqref{Gauss_formula_sphere}, and the Poincar\'e inequality for the probability measure $e^{-V}\mathrm{d}x$ in (C2) we obtain
\begin{align*}
\| A P f \|^2 &= \int_{\mathbb{R}^d} \int_{\mathbb{S}} \left( \omega \cdot \nabla_x f_S \right)^2 \,e^{-V} \mathrm{d}\nu(\omega) \,\mathrm{d}x = \frac{1}{d} \int_{\mathbb{R}^d} \left| \nabla_x f_S \right|^2 \, e^{-V} \mathrm{d}x \\
& \geq \frac{\Lambda}{d} \int_{\mathbb{R}^d} \left(f_S - \int f_S \,e^{-V}  \mathrm{d}x \right)^2 e^{-V} \mathrm{d}x = \frac{\Lambda}{d} \,\left\| P_S f - \left(f,1\right)_\H\right\|^2 .
\end{align*}
Thus Inequality \eqref{eq_inequality_D3} is satisfied for all elements from $\D$. By Corollary \ref{Cor_H3_equivalent} in combination with Proposition \ref{density_of_PAAPD} the claim follows.
\end{proof}

Finally, we prove (H4). This requires the usage of an elliptic regularity result from Dolbeault, Mouhot and Schmeiser (see the Appendix below) and needs especially Conditions (C1)-(C3) from above.

\begin{Pp} Assume that $V:\mathbb{R}^d \rightarrow \mathbb{R}$ satisfies (C1),(C2) and (C3). Then (H4) holds with $N_1=(d-1) \frac{\sigma^2}{4}$ and $N_2$  is some finite positive constant depending only on the potential $V$.
\end{Pp}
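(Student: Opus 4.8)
The plan is to verify the two inequalities in (H4) separately by invoking the sufficient criteria from Lemma \ref{Pp_suff_H4_part1} and Proposition \ref{Pp_suff_H4_part2}. For the first inequality, I would check the hypothesis $PAS = M_1\,PA$ on $\D$ with the constant $M_1 = (d-1)\tfrac{\sigma^2}{2}$; this is the source of $N_1 = \tfrac12|M_1| = (d-1)\tfrac{\sigma^2}{4}$. Concretely, for $f\in\D$ we have $Sf = \tfrac{\sigma^2}{2}\Delta_{\mathbb S}f$, so $PAS f = \tfrac{\sigma^2}{2}\,PA\,\Delta_{\mathbb S}f$; since $A$ acts on the velocity only through $\mathrm{grad}_{\mathbb S}\Phi\cdot\nabla_\omega$ and then $P$ averages over $\mathbb S$, what survives is $PAS f = -\tfrac{\sigma^2}{2}\,\omega\cdot\nabla_x\big(\int_{\mathbb S}\Delta_{\mathbb S}f\,\mathrm d\nu\big)$ wait — rather, using $P_S A = -\,\omega\cdot\nabla_x P_S$ on the relevant pieces and Lemma \ref{Lm_formula_sphere_strong_feller} type identities, one computes $P_S A\,\Delta_{\mathbb S}f = (d-1)\,P_S A f$, hence $PAS = (d-1)\tfrac{\sigma^2}{2}\,PA$ on $\D$. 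With $S(\D)\subset D(A)$ clear from the smoothness in Definition \ref{Df_operator_weak_ass_potential}, Lemma \ref{Pp_suff_H4_part1} then gives $\|BSf\|\le (d-1)\tfrac{\sigma^2}{4}\|(I-P)f\|$ (here the relevant $P_1$ is $P$), which is the first half of (H4).

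For the second inequality I would apply Proposition \ref{Pp_suff_H4_part2}. Essential m-dissipativity of $(PA^2P,\D)$ is already available from Proposition \ref{density_of_PAAPD} under (C1). So the remaining task is the a priori bound \eqref{eq_boundedness_BA*}: for $g = (I-PA^2P)f$ with $f\in\D$, one must show $\|(BA)^*g\| = \|A^2Pf\|\le M_2\|g\|$. Using the formula $(I-PA^2P)f = f - \tfrac1d\Delta f_S + \tfrac1d\nabla V\cdot\nabla f_S$ and the representation \eqref{formula_AAP} for $A^2Pf$, this amounts to estimating $\|\,(\omega,\nabla^2 f_S\,\omega)_{\mathrm{euc}} - \tfrac{1}{d-1}((I-\omega\otimes\omega)\nabla V,\nabla f_S)_{\mathrm{euc}}\,\|_{\H}$ — equivalently (after the Gaussian integral formula) by $\|\nabla^2 f_S\|_{L^2(e^{-V}\mathrm dx)}$ and $\|(1+|\nabla V|)|\nabla f_S|\|_{L^2(e^{-V}\mathrm dx)}$ — in terms of $\|g\|_{\H} = \|f - G f_S + (\text{lower order})\|_{\H}$. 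Here $G = \tfrac1d\Delta - \tfrac1d\nabla V\cdot\nabla$ and $f_S = P_S f$, $f - f_S = (I-P_S)f$. This is precisely where the elliptic regularity estimate of Dolbeault, Mouhot and Schmeiser (the result quoted in the Appendix) enters: it provides, under (C1)–(C3), a constant $C<\infty$ with
\begin{align*}
\big\|\nabla^2 u\big\|_{L^2(e^{-V}\mathrm dx)} + \big\|\,(1+|\nabla V|)\,|\nabla u|\,\big\|_{L^2(e^{-V}\mathrm dx)} \le C\,\big\|(I-G)u + u\big\|\text{-type quantity},
\end{align*}
applied to $u = f_S$. Then $M_2 = M_2(V,d,\sigma)$ is read off from $C$ together with the coefficients $\tfrac1d$, $\tfrac1{d-1}$; note condition (C3) is exactly what lets one absorb the $\nabla^2 V$-term that appears when one commutes derivatives in estimating $\nabla(\nabla V\cdot\nabla f_S)$.

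Once \eqref{eq_boundedness_BA*} holds, Proposition \ref{Pp_suff_H4_part2} yields boundedness of $BA(I-P)$ on $\D$ with $\|BA(I-P)f\|\le M_2\|(I-P)f\|$, i.e. the second inequality in (H4) with $N_2 = M_2$ (again the relevant $P_2$ is $P$). Combining the two parts completes the verification of (H4). The main obstacle is clearly the a priori estimate \eqref{eq_boundedness_BA*}: one must massage $A^2Pf$ and $(I-PA^2P)f$ into exactly the form on which the Dolbeault–Mouhot–Schmeiser elliptic lemma applies (identifying the right-hand side $g$ modulo the harmless $(I-P)f$-part, which is orthogonal to $L^2(e^{-V}\mathrm dx)$ and drops out under $P_S$), and check that (C2) (to control $\|\nabla f_S\|$ by the full elliptic norm) and (C3) (to handle the Hessian of $V$) are used exactly as the cited lemma requires; everything else is routine computation with the Gaussian integral formulas of Lemma \ref{Gauss_formula_sphere} and Lemma \ref{Lm_formula_sphere_strong_feller}.
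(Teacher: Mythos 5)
Your proposal matches the paper's own proof essentially exactly: the first bound comes from Lemma~\ref{Pp_suff_H4_part1} via the commutation identity $PAS = M_1\,PA$ on $\D$ (obtained from Lemma~\ref{Lm_phi_sphere} and Lemma~\ref{Lm_formula_sphere_strong_feller}), and the second from Proposition~\ref{Pp_suff_H4_part2} together with Proposition~\ref{density_of_PAAPD} and the Dolbeault--Mouhot--Schmeiser a priori estimates (Lemma~\ref{Lm_Appendix_3}, Proposition~\ref{Elliptic_regularity_Prop}) applied to the elliptic equation solved by $Pf$ with right-hand side $Pg$. One small slip: since $\Delta_{\mathbb S}I_{\mathbb S}=-(d-1)I_{\mathbb S}$, the correct constant is $M_1=-(d-1)\tfrac{\sigma^2}{2}$ rather than $+(d-1)\tfrac{\sigma^2}{2}$, but as $N_1=\tfrac12|M_1|$ this does not affect the conclusion.
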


\begin{proof}
Now, in order to prove (H4), we aim to apply Lemma \ref{Pp_suff_H4_part1} and Proposition \ref{Pp_suff_H4_part2}. Therefore, let $f \in \D$. By Lemma \ref{Lm_phi_sphere} we conclude
\begin{align*}
P_S Af &= - \nabla_x \cdot \int_{\mathbb{S}} \omega \, f\, \mathrm{d}\nu(\omega) + \int_{\mathbb{S}} \,\text{grad}_{\mathbb{S}} \Phi(\omega) \cdot \nabla_\omega f \,\mathrm{d}\nu(\omega) \\
&= \Big( - \nabla_x + \nabla_x V \Big) \cdot \int_{\mathbb{S}} \omega \,f \, \mathrm{d}\nu(\omega).
\end{align*}
Note that $S(\D) \subset \D$. The last identity together with Lemma \ref{Lm_formula_sphere_strong_feller} yields 
\begin{align*}
P_S A \,Sf = -(d-1) \frac{\sigma^2}{2} \,P_S A f~~\mbox{ for all } f \in \D.
\end{align*} 
By the proof of Lemma \ref{Lm_properties_of_L}, we have $\left(Ah,1\right)_\H=0$ for all $h \in \D$. Using this, we get $PAh = P_S A h$ for all such $h$. Hence we obtain
\begin{align*}
P A \,S = -(d-1) \frac{\sigma^2}{2} \,P A ~~\mbox{ on } \D.
\end{align*} 
So the first part of (H4) is satisfied with $N_1=(d-1) \frac{\sigma^2}{4}$ due to Lemma \ref{Pp_suff_H4_part1}. Next, we show the second part of (H4). Therefore, let $g \in \H$ be of the form $g=(I-PA^2P)f$ for some $f \in \D$. Formula \eqref{g_in_DBA*} together with \eqref{formula_AAP} imply
\begin{align} \label{estimation_BA*_1}
\|(BA)^*g\|_{\H}  &\leq \| |\nabla_x^2  f_S| \|_{L^2(e^{-V}\mathrm{d}x)} + \frac{1}{d-1} \| |\nabla_x V| |\nabla_x f_S| \|_{L^2(e^{-V}\mathrm{d}x)}. 
\end{align}
Now due to Identity \eqref{formula_PAAP} note that $Pf=f_S-\left(f_S,1\right)_{L^2(e^{-V} \mathrm{d}x)}$ with $f_S \in C_c^\infty(\mathbb{R}^d)$ solves the equation
\begin{align*}
Pf - \frac{1}{d} \left( \Delta Pf - \nabla V \cdot \nabla Pf \right)=Pg ~~\mbox{ in } L^2(e^{-V} \mathrm{d}x).
\end{align*}
By applying the a priori estimates from Dolbeault, Mouhot and Schmeiser (see Proposition \ref{Elliptic_regularity_Prop} and Lemma \ref{Lm_Appendix_3} from the Appendix below) to \eqref{estimation_BA*_1}, which require Conditions (C1)-(C3), we conclude
\begin{align*} 
\|(BA)^*g\|_{\H}  &\leq N_2 \, \| Pg\|_{L^2(e^{-V} \mathrm{d}x)} \leq N_2 \,\|g\|_\H
\end{align*}
for some $N_2 < \infty$ independent of $g$. Now, finally, apply Proposition \ref{Pp_suff_H4_part2}.
\end{proof}

Collecting all results from the whole section (for the proof of (D2) see the upcoming section), Theorem \ref{Thm_Hypocoercivity} implies the final hypocoercivity theorem for the spherical velocity Langevin process stated already in the introduction, see Theorem \ref{Hypocoercivity_theorem_spherical_velocity_Langevin}. 

\section{Essential m-dissipativity of the spherical velocity Langevin generator} \label{Section_Construction_Semigroup}

In this section, we prove the that the closure of the spherical velocity Langevin generator $(L,C_c^\infty(\mathbb{M}))$ in $L^2(\mathbb{M},\mu)=L^2(\mu)$ generates a $C_0$-contraction semigroup under the assumption that $V \in C^\infty(\mathbb{R}^d)$ or that $V:\mathbb{R}^d \rightarrow \mathbb{R}$ is locally Lipschitz continuous and bounded from below. Here we follow the notations introduced in Definition \ref{Df_operator_weak_ass_potential} and in Section \ref{Section_Formulas_Notations}. We always assume that $\sigma>0$ without further mention this again. We start with the first case which makes use of methods developed in \cite{HN05}.

\subsection{The smooth case}
At first, we require the following lemma which shows that the vector fields involved in the spherical velocity Langevin operator are satisfying H"{o}rmander's condition. Below $L(X_1,\ldots,X_r)$ denotes the Lie-algebra generated by the smooth vector fields $X_1,\ldots,X_r$, $r \in \mathbb{N}$, on some smooth manifold $\mathbb{X}$, this is,  the least $\mathbb{R}$-vector space including all $X_i$, $i=1,\ldots,r,$ which is closed under the Lie-bracket operation. Moreover, define
\begin{align*}
L(X_1,\ldots,X_r)(p)=\text{span} \big\{ X(p) ~\big|~X \in L(X_1,\ldots,X_r) \big\} \subset T_p\mathbb{X}
\end{align*}
where $T_p\mathbb{X}$ is the tangent space of $\mathbb{X}$ at the point $p \in \mathbb{X}$. The statement of the following lemma can of course also be obtained using local coordinates. However, the upcoming coordinate free proof is maybe more direct. It is similar to the proof of Lemma 5.1 in \cite{GKMS12}.

\begin{Lm} \label{Lm_Hypoellipticity_L}
Let $V \in C^\infty(\mathbb{R}^d)$ and let $\mathcal{A}$, $\mathcal{S}_1,\ldots,\mathcal{S}_d$ be the smooth vector fields living on $\mathbb{M}$ introduced in \eqref{Def_Sn} and in Definition \ref{Df_operator_weak_ass_potential}. Then we have
\begin{align*}
\mbox{dim} ~L\big(\mathcal{A},\mathcal{S}_1,\ldots,\mathcal{S}_d\big)(p)=2d-1 ~~\mbox{ at each point $p \in \mathbb{M}$}.
\end{align*}
\end{Lm}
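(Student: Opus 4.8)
The plan is to verify Hörmander's bracket-generating condition pointwise by explicitly computing enough Lie brackets among $\mathcal{A}$ and $\mathcal{S}_1,\dots,\mathcal{S}_d$ to span the $(2d-1)$-dimensional tangent space $T_p\mathbb{M}$ at every $p=(x,\omega)\in\mathbb{M}=\mathbb{R}^d\times\mathbb{S}^{d-1}$. Since the tangent space at $(x,\omega)$ is $\mathbb{R}^d\times\omega^\perp$, which has dimension $d+(d-1)=2d-1$, it suffices to produce $2d-1$ linearly independent vectors in $L(\mathcal{A},\mathcal{S}_1,\dots,\mathcal{S}_d)(p)$.

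First I would observe that the spherical vector fields $\mathcal{S}_n=(I-\omega\otimes\omega)e_n\cdot\nabla_\omega$ already span the full velocity tangent space $\omega^\perp$ at each $\omega$: indeed $\{(I-\omega\otimes\omega)e_n : n=1,\dots,d\}$ spans $\omega^\perp$ (it is the image of the orthogonal projection onto $\omega^\perp$ applied to a basis of $\mathbb{R}^d$), so these $d$ fields already contribute a $(d-1)$-dimensional subspace of $T_p\mathbb{M}$, namely $\{0\}\times\omega^\perp$. It then remains to generate the ``horizontal'' directions $\mathbb{R}^d\times\{0\}$, and for this the drift $\mathcal{A}$ is essential. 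Writing $\mathcal{A}\equiv -\omega\cdot\nabla_x+\mathrm{grad}_{\mathbb{S}}\Phi\cdot\nabla_\omega$, its $x$-component is $-\omega$, a single horizontal direction. The key computation is then the Lie bracket $[\mathcal{S}_n,\mathcal{A}]$: since $\mathcal{S}_n$ acts only in $\omega$ and the $x$-part of $\mathcal{A}$ is $-\omega$, differentiating $-\omega$ along $\mathcal{S}_n$ produces the horizontal vector $(I-\omega\otimes\omega)e_n$ in the $x$-slot (plus some velocity component which is harmless). Thus $[\mathcal{S}_n,\mathcal{A}](p)$ has $x$-part $(I-\omega\otimes\omega)e_n$, and as $n$ ranges over $1,\dots,d$ these span $\omega^\perp\times\{0\}$; together with $\mathcal{A}(p)$ itself (whose $x$-part is $-\omega\notin\omega^\perp$) we recover all of $\mathbb{R}^d\times\{0\}$. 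Combining with the $\mathcal{S}_n$ themselves, we get all of $T_p\mathbb{M}$, giving dimension $2d-1$; and since $T_p\mathbb{M}$ has dimension exactly $2d-1$, this is also the maximum, so equality holds.

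Concretely I would carry this out in the following order: (i) identify $T_p\mathbb{M}=\mathbb{R}^d\times\omega^\perp$ and note $\dim=2d-1$, so $\le 2d-1$ is automatic and only $\ge 2d-1$ needs proof; (ii) record that $\mathcal{S}_1(p),\dots,\mathcal{S}_d(p)$ span $\{0\}\times\omega^\perp$; (iii) compute $[\mathcal{S}_n,\mathcal{A}]$ using the embedded-manifold formalism of Section \ref{Section_Formulas_Notations} (extend the fields to $\mathbb{R}^{2d}$, bracket, restrict), keeping careful track only of the $x$-component, and show it equals $(I-\omega\otimes\omega)e_n$ up to a tangential velocity term; (iv) conclude that $\{\mathcal{A}(p)\}\cup\{[\mathcal{S}_n,\mathcal{A}](p)\}_{n=1}^d$ spans $\mathbb{R}^d\times\{0\}$ modulo $\{0\}\times\omega^\perp$, hence together with the $\mathcal{S}_n(p)$ spans all of $T_p\mathbb{M}$.

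The main obstacle is the bookkeeping in step (iii): computing the Lie bracket of vector fields that are only defined on the submanifold $\mathbb{M}\subset\mathbb{R}^{2d}$ requires choosing ambient extensions and verifying that the relevant component of the bracket is extension-independent when restricted to $\mathbb{M}$ (which it is, since the Lie bracket of vector fields tangent to $\mathbb{M}$ is again tangent to $\mathbb{M}$, and its value at $p$ depends only on the fields near $p$ on $\mathbb{M}$). One must also be a little careful that the $\mathrm{grad}_{\mathbb{S}}\Phi$ term in $\mathcal{A}$, which depends on $\nabla V(x)$, contributes only to velocity components of the bracket and therefore does not interfere with the argument about horizontal directions; this is where the explicit form $\Phi(x,\omega)=\tfrac{1}{d-1}\nabla V(x)\cdot\omega$ and the projection $(I-\omega\otimes\omega)$ are used. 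Since the argument is essentially the same as the proof of Lemma 5.1 in \cite{GKMS12}, I would present the computation compactly and refer there for the routine details.
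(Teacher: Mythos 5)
Your proposal is correct and follows essentially the same route as the paper: identify $T_p\mathbb{M}$, note the $\mathcal{S}_n(p)$ span $\{0\}\times\omega^\perp$, compute that $[\mathcal{S}_n,\mathcal{A}]$ has $x$-component $(I-\omega\otimes\omega)e_n$ (up to sign) plus a tangential velocity term, and combine with $\mathcal{A}(p)$, whose $x$-component $-\omega$ supplies the remaining horizontal direction. The paper states exactly this bracket structure and concludes by counting independent vectors among $\mathcal{A},\mathcal{S}_1,\dots,\mathcal{S}_d,[\mathcal{S}_1,\mathcal{A}],\dots,[\mathcal{S}_d,\mathcal{A}]$, so there is no substantive difference.
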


\begin{proof}
Choose an arbitrary $p=(x,\omega) \in \mathbb{M}$. First recall 
\begin{align*}
\text{span} \{\omega\} \oplus \text{span} \{ (I-\omega \otimes \omega)e_n~|~n=1,\ldots,d\, \} = \text{span} \{\omega\} \oplus T_\omega\mathbb{S} =\mathbb{R}^d,
\end{align*} 
where $e_n$ is the $n$-th unit vector in $\mathbb{R}^d$. Then note that $\left[\mathcal{S}_n,\mathcal{A}\right]$, $n=1,\ldots,d,$ is of the form
\begin{align*}
\mathcal{N}_n=\left[\mathcal{S}_n,\mathcal{A}\right] = (I-\omega \otimes \omega) e_n \cdot \nabla_x + f^{\,(n)}(x,\omega) \cdot \nabla_\omega 
\end{align*}
for some smooth functions $f^{\,(n)}: \mathbb{M} \rightarrow \mathbb{R}^d$ which are tangential to $\mathbb{S}$. Hence under
\begin{align*}
\mathcal{A}(p),~\mathcal{S}_1(p),\ldots,~ \mathcal{S}_d(p),~\mathcal{N}_1(p),\ldots,~\mathcal{N}_d(p),
\end{align*}
we may always choose $2d-1$-linear independent vectors. The claim follows since $T_p\mathbb{M}$ is of dimension $2d-1$.
\end{proof}

We are arriving at our first core result of this section. The idea for the proof we learned from \cite[Prop.~5.5]{HN05}. Moreover, we remark that the arguments for showing essential m-dissipativity of the classical Langevin generator (or kinetic Fokker-Planck operator respectively) from \cite[Prop.~5.5]{HN05} based on hypoellipticity have already been detected some time before in the surprisingly, seemingly unknown article from Soloveitchik, see \cite{Sol95} and Lemma 3.8 therein.

\begin{Thm} \label{Thm_Essential_mdissipativity_smoothcase}
Assume that $V \in C^\infty(\mathbb{R}^d)$. Then $(L,C_c^\infty(\mathbb{M}))$ is essentially m-dissipative on $L^2(\mathbb{M},\mu)$. Thus its closure $(L,D(L))$ generates a $C_0$-contraction semigroup $(T(t))_{t \geq 0}$ on $L^2(\mathbb{M},\mu)$.
\end{Thm}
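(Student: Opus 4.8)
The strategy is the classical Lumer--Phillips / hypoellipticity argument, exactly in the spirit of \cite[Prop.~5.5]{HN05} and \cite{Sol95}. By Lemma \ref{Lm_properties_of_L} the operator $(L,C_c^\infty(\mathbb{M}))$ is densely defined and dissipative on $L^2(\mathbb{M},\mu)$, so by the Lumer--Phillips theorem it suffices to show that the range $(\lambda - L)(C_c^\infty(\mathbb{M}))$ is dense in $L^2(\mathbb{M},\mu)$ for some (equivalently, all) $\lambda > 0$. I would fix $\lambda = 1$ and argue by duality: suppose $u \in L^2(\mathbb{M},\mu)$ satisfies $((1-L)\varphi, u)_{L^2(\mu)} = 0$ for all $\varphi \in C_c^\infty(\mathbb{M})$; the goal is to conclude $u = 0$.

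The first step is to reinterpret the orthogonality relation distributionally. Writing out $L = \tfrac12\sigma^2\Delta_{\mathbb{S}} - A$ with $A \equiv -\omega\cdot\nabla_x + \mathrm{grad}_{\mathbb{S}}\Phi\cdot\nabla_\omega$ and integrating the weight $e^{-V}$ into the formal adjoint, the relation $((1-L)\varphi,u)_{L^2(\mu)} = 0$ says that $u$ is a weak (distributional) solution on $\mathbb{M}$ of an equation of the form $(1 - L^{\dagger})u = 0$, where $L^{\dagger}$ is the formal transpose of $L$ with respect to Lebesgue$\,\otimes\,$surface measure; concretely $L^{\dagger}$ has the same principal (second-order) part $\tfrac12\sigma^2\Delta_{\mathbb{S}}$ and the same collection of first-order vector fields (up to sign and zeroth-order terms coming from $\nabla V$ and the divergence of the spherical drift). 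The point is that $L^{\dagger}$ is built from the same smooth vector fields $\mathcal{A}, \mathcal{S}_1, \dots, \mathcal{S}_d$, plus a smooth zeroth-order term. By Lemma \ref{Lm_Hypoellipticity_L} these vector fields satisfy H\"ormander's bracket condition at every point of $\mathbb{M}$, hence $L^{\dagger}$ (and $1 - L^{\dagger}$) is hypoelliptic by H\"ormander's theorem. Therefore $u$ has a $C^\infty$ representative on all of $\mathbb{M}$, and we may work with a genuine smooth solution.

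The second step is the a priori $L^2$-bound that forces $u \equiv 0$. Here I would exploit dissipativity once more, now for the smooth solution: one wants to justify the computation
\begin{align*}
\|u\|_{L^2(\mu)}^2 = (u,u)_{L^2(\mu)} = (Lu,u)_{L^2(\mu)} \le 0,
\end{align*}
using that $(Su,u)_{L^2(\mu)} \le 0$ (nonpositivity of the spherical Laplacian part) and $(Au,u)_{L^2(\mu)} = 0$ (antisymmetry) — which would give $u = 0$ immediately. The obstacle is that $u$ need not a priori lie in $D(S)\cap D(A)$ nor be integrable at infinity in $x$, so this integration by parts is not legitimate as written. The standard fix, which is exactly the Helffer--Nier idea, is to introduce cutoffs: multiply by $\chi_n^2 u$ with $\chi_n(x,\omega) = \psi(x/n)$, $\psi \in C_c^\infty(\mathbb{R}^d)$ a standard plateau function, use the hypoelliptic regularity plus local elliptic estimates on the $\omega$-variable to control $\mathrm{grad}_{\mathbb{S}} u$ on the support of $\chi_n$, and show that the commutator/error terms produced by differentiating $\chi_n$ (all carrying a factor $n^{-1}$ from $\nabla\psi$, and all supported where $|x|\sim n$) tend to zero as $n\to\infty$. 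This requires knowing $u \in L^2(\mu)$ globally (given) and a local energy estimate of the form $\int \chi_n^2 |\mathrm{grad}_{\mathbb{S}} u|^2 e^{-V}\,dx\,d\nu \le C(\text{const} + \|u\|_{L^2(\mu)}^2)$ uniformly in $n$, obtained by testing the equation against $\chi_n^2 u$ itself. The surviving terms then yield $\|u\|_{L^2(\mu)}^2 \le \liminf_n(\text{error terms}) = 0$. I expect this cutoff/commutator bookkeeping — making the transport term $\omega\cdot\nabla_x$ harmless despite its non-integrability and keeping track of the zeroth-order $\nabla V$ contributions — to be the main technical obstacle; everything else (dissipativity, hypoellipticity via Lemma \ref{Lm_Hypoellipticity_L}, the Lumer--Phillips reduction) is routine. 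Once the range density is established, Lumer--Phillips gives that $(L,D(L))$ generates a $C_0$-contraction semigroup $(T(t))_{t\ge 0}$, completing the proof.
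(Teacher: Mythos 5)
Your proposal is correct and follows essentially the same route as the paper: Lumer--Phillips reduction to density of $(1-L)(C_c^\infty(\mathbb{M}))$, hypoellipticity via Lemma \ref{Lm_Hypoellipticity_L} and H\"ormander's theorem to upgrade the orthogonal element to a smooth function, and then a cutoff argument with $\varphi_n(x)=\psi(x/n)$ to justify the energy identity and conclude the solution vanishes. One remark: the cutoff step is actually simpler than you anticipate, and no local elliptic estimate for $\mathrm{grad}_\mathbb{S} u$ is needed --- because the cutoff depends only on $x$ it commutes with $S=\tfrac{\sigma^2}{2}\Delta_\mathbb{S}$, so $(S(\varphi_n^2 f),f)_{L^2(\mu)}=(S(\varphi_n f),\varphi_n f)_{L^2(\mu)}\le 0$ exactly, while the product rule and antisymmetry of $A$ leave only the single transport commutator $\int_{\mathbb{M}}(\omega\cdot\nabla\varphi_n)\varphi_n f^2\,\mathrm{d}\mu$, which is bounded by $\tfrac{C}{n}\|f\|^2$ directly (the spherical drift term in $A\varphi_n$ also drops out since $\varphi_n$ is $\omega$-independent).
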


\begin{proof}
First of all, dissipativity of $(L,C_c^\infty(\mathbb{M}))$ on $L^2(\mathbb{M},\mu)$ is clear by Lemma \ref{Lm_properties_of_L}. By the Lumer-Phillips theorem, see \cite{Gol85} or \cite{LP61}, it suffices to verify that $(I-L)(C_c^\infty(\mathbb{M}))$ is dense in $L^2(\mathbb{M},\mu)$. Therefore, let $f \in L^2(\mathbb{M},\mu)$ such that
\begin{align} \label{eq_proof_Thm_Essential_mdissipativity_smoothcase}
\left( (I-L)u,f \right)_{L^2(\mu)} = 0 ~~\mbox{ for all } u \in C_c^\infty(\mathbb{M}).
\end{align}
We have to show that $f=0$. Note that $e^{-V}f \in L^1_{\text{loc}}(\mathbb{M},\mathrm{d}x \otimes \nu)$. From Lemma \ref{Lm_Hypoellipticity_L} and H"{o}rmander's hypoellipticity theorem we can infer that $e^{-V}f \in C^\infty(\mathbb{M})$, see Proposition \ref{Pp_consequence_Hormander_hypoellipticity} in the Appendix below. Hence $f \in C^\infty(\mathbb{M})$. Now choose cut-off functions $\varphi_n \in C_c^\infty(\mathbb{R}^d)$, $0 \leq \varphi_n \leq 1$, $n \in \mathbb{N}$, be as defined in the proof of Lemma \ref{Lm_verification_D5_spherical_velocity_Langevin}. Recall that there exists $C< \infty$ such that
\begin{align*}
\sup_{x \in \mathbb{R}^d} \left| \nabla \varphi_n(x) \right| \leq \frac{1}{n} \, C ~\mbox{ and }~\varphi_n \rightarrow 1 \mbox{ pointwise as } n \rightarrow \infty.
\end{align*}
Let $n \in \mathbb{N}$ and set $u_n=\varphi_n^2 f$ (as in the proof of Proposition 5.5 in \cite{HN05}). Then \eqref{eq_proof_Thm_Essential_mdissipativity_smoothcase} yields
\begin{align*}
\left( u_n,f \right)_{L^2(\mu)} =  \left( L u_n,f \right)_{L^2(\mu)} = \left( S u_n,f \right)_{L^2(\mu)} - \left( A u_n,f \right)_{L^2(\mu)}.
\end{align*}
Since $(S,C_c^\infty(\mathbb{M}))$ is nonpositive definite, we get
\begin{align*}
\left( S u_n,f \right)_{L^2(\mu)}=\left( S (\varphi_n f),\varphi_n f \right)_{L^2(\mu)} \leq 0
\end{align*}
We have $Au_n=(A\varphi_n) \, \varphi_n f  \,+ \,\varphi_n\, A(\varphi_nf)$. So by the antisymmetry of $(A,C_c^\infty(\mathbb{M}))$,
\begin{align*}
\left( A u_n,f \right)_{L^2(\mu)}= \left( (A \varphi_n) \varphi_n f, f \right)_{L^2(\mu)} = - \int_\mathbb{M} \left(\omega \cdot \nabla \varphi_n \right) \,\varphi_n \,f^2\,e^{-V} \mathrm{d}x \otimes \nu.
\end{align*}
Altogether, we obtain
\begin{align*}
\int_\mathbb{M} \varphi_n^2 \, f^2\, e^{-V} \mathrm{d}x \otimes \nu  \leq \frac{1}{n} \, C \,\int_\mathbb{M} \varphi_n \,f^2\,e^{-V} \mathrm{d}x \otimes \nu \leq \frac{1}{n} \, C\, \|f\|^2.
\end{align*}
So for $n \rightarrow \infty$, we conclude $\|f\|^2 \leq 0$ by dominated convergence. Hence $f=0$.
\end{proof}

In Subsection \ref{Section_locallyLipschitzContinuousCase}, we extend the statement of the previous theorem to the setting in which $V$ is locally Lipschitz continuous and bounded from below. The methods for this have been developed in \cite[Sec.~4]{CG08} and \cite[Sec.~2]{CG10} and can also be applied to the spherical velocity Langevin generator in a similar way, see next.

\subsection{The locally Lipschitz continuous case} \label{Section_locallyLipschitzContinuousCase}

So let us proceed similar as in \cite[Sec.~4]{CG08}, or \cite[Sec.~2]{CG08} respectively. Therefore, we need some lemmas first. We set
\begin{align*}
D_1=C_c^\infty(\mathbb{R}^d) \otimes C^\infty(\mathbb{S})=\text{span}\big\{ f \otimes g ~\big|~ f \in C^{\infty}_c(\mathbb{R}^d),~g \in C^\infty(\mathbb{S}) \big\}
\end{align*}
where $f \otimes g$ is defined by $(f \otimes g)(x,\omega)=f(x)\,g(\omega)$ for $(x,\omega) \in \mathbb{M}$ therein and let $(L_0,D_1)$ be given via
\begin{align*}
L_{0}= \omega \cdot \nabla_x + \frac{\sigma^2}{2}\,\Delta_\mathbb{S} ~~\mbox { on } D_1.
\end{align*} 
Then the following statement holds.

\begin{Lm} \label{Lm_ess_mdiss_localLip_1}
$(L_0,D_1)$ is essentially m-dissipative on $L^2(\mathbb{M},\mathrm{d}x \otimes \nu)$.
\end{Lm}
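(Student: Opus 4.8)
The statement concerns the operator $L_0 = \omega \cdot \nabla_x + \tfrac{\sigma^2}{2}\Delta_{\mathbb{S}}$ on the tensor-product domain $D_1 = C_c^\infty(\mathbb{R}^d) \otimes C^\infty(\mathbb{S})$ acting on the product Hilbert space $L^2(\mathbb{M}, \mathrm{d}x\otimes\nu) = L^2(\mathbb{R}^d,\mathrm{d}x) \otimes L^2(\mathbb{S},\nu)$. The plan is to exploit the tensor-product structure together with the fact that the two ``building blocks'' are each (essentially) generators of $C_0$-semigroups on their respective factors: the transport part $\omega\cdot\nabla_x$ (for fixed $\omega\in\mathbb{S}$, or rather the first-order operator on $\mathbb{R}^d$ whose coefficient vector is $\omega$) generates a unitary translation group on $L^2(\mathbb{R}^d,\mathrm{d}x)$, and $\tfrac{\sigma^2}{2}\Delta_{\mathbb{S}}$ with domain $C^\infty(\mathbb{S})$ is essentially self-adjoint and nonpositive on $L^2(\mathbb{S},\nu)$ (the sphere being compact), hence essentially m-dissipative and the generator of a contraction (heat) semigroup there.

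First I would record that $(L_0, D_1)$ is dissipative on $L^2(\mathbb{M},\mathrm{d}x\otimes\nu)$: this is the computation already used in Lemma \ref{Lm_properties_of_L} specialized to $V\equiv 0$ (so that $e^{-V}\mathrm{d}x = \mathrm{d}x$), using the weak Gaussian integral formula to see $\int_{\mathbb{R}^d}\omega\cdot\nabla_x(h)\,\mathrm{d}x = 0$ and Green's formula for $\Delta_{\mathbb{S}}$; in fact $(\omega\cdot\nabla_x,\cdot)$ is antisymmetric and $\Delta_{\mathbb{S}}$ symmetric nonpositive. By the Lumer--Phillips theorem it then suffices to show that $(\lambda - L_0)(D_1)$ is dense in $L^2(\mathbb{M},\mathrm{d}x\otimes\nu)$ for some $\lambda>0$. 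The cleanest route to this is via Fourier transform in the $x$-variable: applying the (partial) Fourier transform $\mathcal{F}_x$ turns $\omega\cdot\nabla_x$ into multiplication by $i\,\omega\cdot\xi$, so that $L_0$ becomes, fiberwise in $\xi\in\mathbb{R}^d$, the operator $M_{i\omega\cdot\xi} + \tfrac{\sigma^2}{2}\Delta_{\mathbb{S}}$ on $L^2(\mathbb{S},\nu)$. For each fixed $\xi$, this is a bounded (skew-adjoint) perturbation of the essentially-m-dissipative operator $\tfrac{\sigma^2}{2}\Delta_{\mathbb{S}}$, hence itself generates a $C_0$-contraction semigroup on $L^2(\mathbb{S},\nu)$; thus $(\lambda - (M_{i\omega\cdot\xi} + \tfrac{\sigma^2}{2}\Delta_{\mathbb{S}}))$ is surjective onto $L^2(\mathbb{S},\nu)$ for $\lambda>0$. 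A direct-integral / measurable-selection argument then assembles these fiberwise inverses into a bounded operator realizing $(\lambda - L_0)^{-1}$ on all of $L^2(\mathbb{R}^d_\xi; L^2(\mathbb{S},\nu))$, and one checks that the resolvent of the closure of $(L_0,D_1)$ agrees with it; density of the range follows. Alternatively, and perhaps more in the spirit of the surrounding text, one can invoke a perturbation result as in \cite{CG08}, \cite{CG10}: write $L_0 = A_0 + S_0$ with $A_0 = \omega\cdot\nabla_x$ on $D_1$ and $S_0 = \tfrac{\sigma^2}{2}\Delta_{\mathbb{S}}$ on $D_1$, observe that $(S_0,D_1)$ is essentially self-adjoint (being $C^\infty(\mathbb{S})$-closure of the Laplace--Beltrami tensored with the identity on $C_c^\infty(\mathbb{R}^d)$, using compactness of $\mathbb{S}$ and a standard tensorization lemma for essential self-adjointness), and that $A_0$ is $S_0$-bounded with relative bound zero in an appropriate graph-norm sense (or use that $A_0$ maps the domain of the closure of $S_0$ continuously, plus a Kato--Rellich / Miyadera-type perturbation theorem for m-dissipative operators); conclude essential m-dissipativity of the sum.

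The step I expect to be the main obstacle is the passage from the fiberwise (in $\xi$) surjectivity statements to a genuine global resolvent bound and the identification of the range-closure with the closure of $(L_0,D_1)$ — i.e.\ making the direct-integral decomposition rigorous, or equivalently verifying the hypotheses (relative boundedness with small bound, or the Miyadera condition) of whichever abstract perturbation theorem one cites. One must be careful that $D_1$ is a \emph{core}, not merely that $L_0$ is dissipative with dense range on an abstract domain; the tensor-product structure of $D_1$ and the density of $C_c^\infty(\mathbb{R}^d)\otimes C^\infty(\mathbb{S})$ in the relevant graph norms is what makes this work, and I would isolate that as a lemma (or cite the corresponding tensorization statement from \cite{CG08}). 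Once the resolvent is in hand, the conclusion ``closure generates a $C_0$-contraction semigroup'' is immediate from Lumer--Phillips.
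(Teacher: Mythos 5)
Your proposal does not follow the paper's route, and one of your two suggested alternatives contains a genuine error.

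The paper's actual proof is much shorter and rests on a result it already has in hand: Theorem \ref{Thm_Essential_mdissipativity_smoothcase} applied with $V\equiv 0$ shows that $(L_0,C_c^\infty(\mathbb{M}))$ is essentially m-dissipative on $L^2(\mathbb{M},\mathrm{d}x\otimes\nu)$ (this is the hypoellipticity/H\"ormander argument). The lemma then reduces to showing that the smaller domain $D_1$ is a core for the closure, which the paper does by a direct approximation: extend $f\in C_c^\infty(\mathbb{M})$ to $\widetilde f\in C_c^\infty(\mathbb{R}^{2d})$, approximate $\widetilde f$ in $\mathcal{D}(\mathbb{R}^{2d})$ by elements of $C_c^\infty(\mathbb{R}^d)\otimes C_c^\infty(\mathbb{R}^d)$, extend $L_0$ to a smooth second-order operator $\widetilde{L_0}$ on $\mathbb{R}^{2d}$, and pass to the limit in the graph norm. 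You never invoke Theorem \ref{Thm_Essential_mdissipativity_smoothcase}, so you are implicitly trying to reprove essential m-dissipativity of $L_0$ from scratch.

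Your second suggestion --- treating $A_0=\omega\cdot\nabla_x$ as an $S_0$-bounded perturbation of $S_0=\tfrac{\sigma^2}{2}\Delta_{\mathbb{S}}$ with small relative bound --- does not work. The operator $S_0$ acts only in the spherical variable $\omega$ and gives no control whatsoever over $x$-derivatives, whereas $A_0$ is a first-order differential operator in $x$. Concretely, for fixed $\varphi\in C_c^\infty(\mathbb{R}^d)$ and $g\in C^\infty(\mathbb{S})$, set $f_n(x,\omega)=\varphi(nx)g(\omega)$; then $\|S_0 f_n\|/\|f_n\|$ stays bounded while $\|A_0 f_n\|/\|f_n\|\to\infty$, so no relative bound (let alone a small one) holds, and the Kato--Rellich / Miyadera route is closed. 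This is precisely the degeneracy that makes the equation ``hypocoercive'' rather than elliptic, and it is why the paper cannot appeal to such a perturbation lemma here. (The Kato perturbation in Lemma \ref{Lm_ess_mdiss_localLip_2} of the paper is used for the lower-order potential term $-\text{grad}_{\mathbb{S}}\Phi\cdot\nabla_\omega+\tfrac12\nabla V\cdot\omega$, which is genuinely $S_0$-bounded; it is never used to absorb the transport term $\omega\cdot\nabla_x$.)

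Your first suggestion --- Fourier transform in $x$, fiberwise m-dissipativity of $i\,\omega\cdot\xi+\tfrac{\sigma^2}{2}\Delta_{\mathbb{S}}$ on $L^2(\mathbb{S},\nu)$, then a direct-integral assembly --- is a legitimate and fundamentally different strategy, and it is roughly in the spirit of what the paper's Remark \ref{Thm_essential_mdissipativity_sphericalvelocityLangevin_locLipschitz} alludes to when it says hypoellipticity can be avoided by following \cite{CG08}, \cite{CG10}. But as you yourself flag, the step from fiberwise surjectivity to a global identification of the resolvent of the closure of $(L_0,D_1)$ is exactly where the work lies, and your proposal leaves it open. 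Two specific gaps you would need to close: (i) $\mathcal{F}_x$ does not preserve $D_1$ (it maps $C_c^\infty(\mathbb{R}^d)$ to Paley--Wiener functions, not compactly supported ones), so you would have to re-establish a core property on the Fourier side; (ii) the fiberwise perturbation is skew-adjoint and bounded but with bound growing like $|\xi|$, so the fiberwise domains of the generators are constant in $\xi$ but the graph norms are not, and one must check measurability and uniform dissipativity to assemble the resolvents. The paper's strategy --- establish the result once on the larger core $C_c^\infty(\mathbb{M})$ by hypoellipticity and then pass down to $D_1$ by a purely approximation-theoretic argument --- sidesteps all of this.
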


\begin{proof}
Note that Theorem \ref{Thm_Essential_mdissipativity_smoothcase} implies that $(L_0,C_c^\infty(\mathbb{M}))$ is essentially m-dissipative on $L^2(\mathbb{M},\mathrm{d}x \otimes \nu)$. We have to show that $(L_0,C_c^\infty(\mathbb{M}))$ is contained in the closure of $(L_0,D_1)$. Let $f \in C_c^\infty(\mathbb{M})$ and choose an extension $\widetilde{f} \in C_c^\infty(\mathbb{R}^{2d})$ of $f$. By using e.g.~the representation of $\Delta_\mathbb{S}$ in terms of the vector fields introduced in \eqref{Def_Sn}, $L_0$ can be extended to some smooth second order differential operator $\widetilde{L_0}$ on $\mathbb{R}^{2d}$ satisfying
\begin{align*}
\widetilde{L_0} \,\widetilde{h}_{\,|\mathbb{M}} = L_0h ~\mbox{ for all } ~h \in C^\infty(\mathbb{M}),~\widetilde{h} \in C^\infty(\mathbb{R}^{2d})~ \mbox{ with }~\widetilde{h}_{\,|\mathbb{M}}=h.
\end{align*}  
Now there exists $\widetilde{f_n} \in C^\infty_c(\mathbb{R}^d) \otimes C^\infty_c(\mathbb{R}^d)$, $n \in \mathbb{N}$, converging to $\widetilde{f}$ w.r.t.~the topology induced by the locally convex vector space $\mathcal{D}(\mathbb{R}^{2d})=C^\infty_c(\mathbb{R}^{2d})$, for instance see \cite[Ch.~4, Sec.~8,~Prop.~1]{Hor66}. In particular, all $\text{supp}(\widetilde{f_n})$ and $\text{supp}(\widetilde{f})$ are contained in some common compact set of $\mathbb{R}^{2d}$ and we easily see that
\begin{align*}
\sup_{(x,\omega)\in \mathbb{R}^{2d}} \left|\widetilde{L_0}\,\widetilde{f_n}(x,\omega) - \widetilde{L_0}\,\widetilde{f}(x,\omega)\right| \rightarrow 0
\end{align*}
as $n \rightarrow \infty$. Hence also
\begin{align*}
\sup_{(x,\omega)\in \mathbb{M}} \left|L_0f_n(x,\omega) - L_0f(x,\omega)\right| \rightarrow 0,
\end{align*}
where $f_n \in D_1$ denotes the restriction of $\widetilde{f_n}$ to $\mathbb{M}$, $n \in \mathbb{N}$. Thus we conclude that $f_n \rightarrow f$ and $Lf_n \rightarrow Lf$ in $L^2(\mathbb{M},\mathrm{d}x \otimes \nu)$ as $n \rightarrow \infty$, implying the claim.
\end{proof}

Let $V:\mathbb{R}^d \rightarrow \mathbb{R}$ again be locally Lipschitz continuous and denote by $H^{1,\infty}_c(\mathbb{R}^d)$ the vector space of all $H^{1,\infty}(\mathbb{R}^d)$-functions that vanish outside some bounded set. Introduce 
\begin{align*}
D_2=H^{1,\infty}_c(\mathbb{R}^d) \otimes C^\infty(\mathbb{S}) = \text{span}\big\{ f \otimes g ~\big|~ f \in H^{1,\infty}_c(\mathbb{R}^d),~g \in C^\infty(\mathbb{S}) \big\}.
\end{align*}
By analyzing the proof of Proposition \ref{Lm_properties_of_L} one directly sees that $(L,D_2)$ (with $L=S-A$ on $D_2$ , $S$ as before and $A$ directly defined via \eqref{alternative_repr_A_smooth_case}) is still a well-defined linear operator on $L^2(\mathbb{M},\mu)$ having the properties from Proposition \ref{Lm_properties_of_L} with $D$ replaced by $D_2$ therein. Now consider the unitary isomorphism
\begin{align*}
U:L^2(\mathbb{M},\mu) \rightarrow L^2(\mathbb{M},\mathrm{d}x \otimes \nu),~f \mapsto Uf=e^{-\frac{V}{2}}f.
\end{align*}
Define $(\widetilde{L},\widetilde{D_2})$ to be the transformation of $(L,D_2)$ under $U$, this is, 
\begin{align*}
\widetilde{D_2}=U (D_2),~\widetilde{L}=ULU^{-1}~\mbox{ on } \widetilde{D_2}.
\end{align*}
Clearly, $\widetilde{D_2}=D_2$ and it is easy to check that $\widetilde{L}$ can be written on $D_2$ as 
\begin{align*}
\widetilde{L}=S-\widetilde{A}
\end{align*}
where $(S,D_2)$ is symmetric and nonpositive definite on $L^2(\mathbb{M},\mathrm{d}x \otimes \nu)$, $(\widetilde{A},D_2)$ is antisymmetric on $L^2(\mathbb{M},\mathrm{d}x \otimes \nu)$ and we have
\begin{align*}
\widetilde{A} = - \omega \cdot \nabla_x + \text{grad}_{\mathbb{S}}\Phi \cdot \nabla_\omega - \frac{1}{2} \, \nabla V \cdot \omega ~~\mbox{ on } D_2.
\end{align*}
We first show essential m-dissipativity of $(L,D_2)$ on $L^2(\mathbb{M},\mu)$ under the assumption that $V$ is globally Lipschitz continuous. By the previous considerations this property is equivalent to essential m-dissipativity of $(\widetilde{L},D_2)$ on $L^2(\mathbb{M},\mathrm{d}x \otimes \nu)$. Before proving the latter, we recall first the following pertubation theorem. For the proof, see \cite[Cor.~3.8,~Lem.~3.9,~Prob.~3.10]{Dav80}.

\begin{Thm} \label{Pertubation_Theorem}
Let $(K,\mathcal{D})$ be an essentially m-dissipative operator on some Hilbert space $\mathcal{H}$ and let $(T,\mathcal{D})$ be dissipative. Assume that there exists finite constants $c_1 \in \mathbb{R}$ and $c_2 \geq 0$ such that 
\begin{align*}
\|Tf\|^2 \leq c_1 \left(Kf,f\right)_X + c_2 \,\|f\|^2 ~~\mbox{ for all } f \in \mathcal{D}.
\end{align*}
Then $(K+T,\mathcal{D})$ is essentially m-dissipative on $\mathcal{H}$.
\end{Thm}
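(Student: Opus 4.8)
The plan is to invoke the Lumer--Phillips theorem in its ``essential'' form: a densely defined dissipative operator $(A,\mathcal{D})$ for which $(\lambda_0 I-A)(\mathcal{D})$ is dense in $\mathcal{H}$ for some $\lambda_0>0$ is automatically essentially m-dissipative. Dissipativity of $(K+T,\mathcal{D})$ is immediate, since for $f\in\mathcal{D}$ one has $((K+T)f,f)_{\mathcal{H}}=(Kf,f)_{\mathcal{H}}+(Tf,f)_{\mathcal{H}}\le 0$ by the dissipativity of $K$ and of $T$. Hence the whole problem reduces to proving density of the range $(\lambda_0 I-(K+T))(\mathcal{D})$ for a suitably large $\lambda_0$. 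We may moreover assume $c_1\le 0$ and write $c_1=-\gamma$ with $\gamma\ge 0$: if $c_1>0$ the term $c_1(Kf,f)_{\mathcal{H}}$ is nonpositive and may simply be discarded, which corresponds to $\gamma=0$. Thus the hypothesis reads $\|Tf\|^2\le\gamma\bigl(-(Kf,f)_{\mathcal{H}}\bigr)+c_2\|f\|^2$ with $-(Kf,f)_{\mathcal{H}}\ge 0$.

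The key step is to convert this quadratic-form bound into a genuine relative bound of $T$ with respect to $\lambda_0 I-K$, with relative constant strictly less than $1$. Fix $\lambda_0>0$ and $f\in\mathcal{D}$, and set $g:=\lambda_0 f-Kf$. Taking the scalar product with $f$ gives $(g,f)_{\mathcal{H}}=\lambda_0\|f\|^2-(Kf,f)_{\mathcal{H}}$, hence both $-(Kf,f)_{\mathcal{H}}\le(g,f)_{\mathcal{H}}\le\|g\|\,\|f\|$ and (plain dissipativity) $\lambda_0\|f\|^2\le(g,f)_{\mathcal{H}}\le\|g\|\,\|f\|$, i.e.\ $\|f\|\le\lambda_0^{-1}\|g\|$. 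Combining the two, $-(Kf,f)_{\mathcal{H}}\le\lambda_0^{-1}\|g\|^2$, so that
\[
\|Tf\|^2\le\Bigl(\tfrac{\gamma}{\lambda_0}+\tfrac{c_2}{\lambda_0^{2}}\Bigr)\,\|(\lambda_0 I-K)f\|^2\qquad\text{for all }f\in\mathcal{D}.
\]
Choosing $\lambda_0$ large enough that $\delta:=\bigl(\gamma\lambda_0^{-1}+c_2\lambda_0^{-2}\bigr)^{1/2}<1$, we obtain $\|Tf\|\le\delta\,\|(\lambda_0 I-K)f\|$ on $\mathcal{D}$.

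Now I would conclude with a Neumann-series argument. Let $R_0:=(\lambda_0 I-\overline{K})^{-1}$, which is bounded on $\mathcal{H}$ of norm $\le\lambda_0^{-1}$ because $\overline{K}$ is m-dissipative. Since $\mathcal{D}$ is a core for $\overline{K}$, the set $\mathcal{G}:=(\lambda_0 I-K)(\mathcal{D})$ is dense in $\mathcal{H}$, and $R_0$ restricts to the inverse of the bijection $(\lambda_0 I-K):\mathcal{D}\to\mathcal{G}$. The estimate above says exactly that $S:=T\,R_0|_{\mathcal{G}}:\mathcal{G}\to\mathcal{H}$ satisfies $\|Sg\|\le\delta\|g\|$, so $S$ extends to a bounded operator $\overline{S}$ on $\mathcal{H}$ with $\|\overline{S}\|\le\delta<1$. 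For $f=R_0 g\in\mathcal{D}$ with $g\in\mathcal{G}$ we have $(\lambda_0 I-(K+T))f=(\lambda_0 I-K)f-Tf=g-Sg=(I-\overline{S})g$, whence $(\lambda_0 I-(K+T))(\mathcal{D})=(I-\overline{S})(\mathcal{G})$. As $\|\overline{S}\|<1$, the operator $I-\overline{S}$ is boundedly invertible, hence a homeomorphism of $\mathcal{H}$, so it maps the dense set $\mathcal{G}$ onto a dense set. Therefore $(\lambda_0 I-(K+T))(\mathcal{D})$ is dense in $\mathcal{H}$, and together with the dissipativity noted above the Lumer--Phillips theorem yields that $(K+T,\mathcal{D})$ is essentially m-dissipative.

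The only real obstacle is the second step --- extracting an operator relative bound with constant $<1$ from the form bound --- and it is mostly a matter of care with signs: one must combine the numerical-range identity $(g,f)_{\mathcal{H}}=\lambda_0\|f\|^2-(Kf,f)_{\mathcal{H}}$ with plain dissipativity $\|f\|\le\lambda_0^{-1}\|g\|$, and keep $-(Kf,f)_{\mathcal{H}}\ge 0$ attached to the nonnegative coefficient $\gamma$. A secondary point worth flagging is that the core hypothesis on $\mathcal{D}$ is precisely what makes $\mathcal{G}$ dense; it cannot be dropped. (If one did not obtain $\delta<1$ in a single step, the same ingredients would still support the classical continuity argument, showing that $\{s\in[0,1]:\overline{K+sT}\text{ is m-dissipative}\}$ is nonempty, open and closed; but since the constant $\delta$ above is uniform in any coupling and already $<1$, the one-step argument suffices.)
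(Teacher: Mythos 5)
Your proof is correct. The paper does not supply an argument of its own but cites Davies \cite[Cor.~3.8,~Lem.~3.9,~Prob.~3.10]{Dav80}; your route --- converting the quadratic-form bound into an operator relative bound with constant strictly less than one (via the numerical-range identity $(g,f)_{\mathcal{H}}=\lambda_0\|f\|^2-(Kf,f)_{\mathcal{H}}$ combined with $\|f\|\le\lambda_0^{-1}\|g\|$), then deducing range density through a Neumann-series argument and invoking Lumer--Phillips --- is precisely the standard Kato--Rellich-type perturbation argument to which that reference points.
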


As announced before, the following lemma holds. The idea for the proof is obtained from the proof of Lemma 7 in \cite{CG08}.

\begin{Lm} \label{Lm_ess_mdiss_localLip_2}
Assume that $V:\mathbb{R}^d \rightarrow \mathbb{R}$ is globally Lipschitz continuous. Then $(\widetilde{L},D_2)$ is essentially m-dissipative on $L^2(\mathbb{M},\mathrm{d}x \otimes \nu)$. Hence $(L,D_2)$ is essentially m-dissipative on $L^2(\mathbb{M},\mu)$.
\end{Lm}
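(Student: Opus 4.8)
\emph{Approach.} The plan is to pass through the unitary transformation $U$ and realise $\widetilde L$ as a perturbation of the operator $L_0$ from Lemma~\ref{Lm_ess_mdiss_localLip_1}, then invoke the perturbation Theorem~\ref{Pertubation_Theorem}. Since $U$ is a unitary isomorphism with $U(D_2)=D_2$ and $\widetilde L=ULU^{-1}$ on $D_2$, essential m-dissipativity of $(L,D_2)$ on $L^2(\mathbb M,\mu)$ is equivalent to that of $(\widetilde L,D_2)$ on $L^2(\mathbb M,\mathrm{d}x\otimes\nu)$, so it suffices to prove the latter. Comparing the explicit expression for $\widetilde A$ with $-\,\omega\cdot\nabla_x$ one reads off $\widetilde L=L_0+T$ on $D_2$, where $T=-\,\text{grad}_{\mathbb S}\Phi\cdot\nabla_\omega+\tfrac12\,\nabla V\cdot\omega$. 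First I would observe that $(L_0,D_2)$ is itself essentially m-dissipative on $L^2(\mathbb M,\mathrm{d}x\otimes\nu)$: indeed $D_1\subset D_2$, the operator $L_0$ is well defined on $D_2$, and $(L_0,D_2)$ is dissipative because $\Delta_{\mathbb S}$ is symmetric and nonpositive and $\omega\cdot\nabla_x$ is antisymmetric on $D_2$ (elements of $H^{1,\infty}_c(\mathbb R^d)$ produce no boundary terms under integration by parts in $x$); being a dissipative extension of the essentially m-dissipative operator $(L_0,D_1)$, its closure coincides with $\overline{(L_0,D_1)}$ by maximality of m-dissipative operators.

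Next I would check the two hypotheses of Theorem~\ref{Pertubation_Theorem} for $K=L_0$, $(T,D_2)$ and $\mathcal H=L^2(\mathbb M,\mathrm{d}x\otimes\nu)$. \emph{Dissipativity of $T$:} for fixed $x$ the spherical field $\text{grad}_{\mathbb S}\Phi(x,\cdot)$ equals $\text{grad}_{\mathbb S}\phi_x$ with $\phi_x(\omega)=\tfrac1{d-1}(\nabla V(x),\omega)_{\text{euc}}$, so applying Lemma~\ref{Lm_phi_sphere} to $f^2$ yields $\int_{\mathbb S}(\text{grad}_{\mathbb S}\Phi\cdot\nabla_\omega f)\,f\,\mathrm{d}\nu=\tfrac12\int_{\mathbb S}f^2\,(\nabla V(x),\omega)_{\text{euc}}\,\mathrm{d}\nu$; integrating in $x$ gives $(Tf,f)_{\mathcal H}=0$, hence $T$ is dissipative (in fact antisymmetric on $D_2$). \emph{Relative form bound:} since $\omega\cdot\nabla_x$ is antisymmetric one has $(L_0f,f)_{\mathcal H}=\tfrac{\sigma^2}{2}(\Delta_{\mathbb S}f,f)_{\mathcal H}=-\tfrac{\sigma^2}{2}\int_{\mathbb M}|\text{grad}_{\mathbb S}f|^2\,\mathrm{d}x\otimes\nu$. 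Writing $C$ for the Lipschitz constant of $V$, so that $|(I-\omega\otimes\omega)\nabla V(x)|\le|\nabla V(x)|\le C<\infty$ (this is exactly where global Lipschitz continuity is used), Cauchy--Schwarz in $T\mathbb S$ together with $|\omega|=1$ gives
\[
\|Tf\|_{\mathcal H}^2\;\le\;\frac{2C^2}{(d-1)^2}\int_{\mathbb M}|\text{grad}_{\mathbb S}f|^2\,\mathrm{d}x\otimes\nu\;+\;\frac{C^2}{2}\,\|f\|_{\mathcal H}^2\;=\;c_1\,(L_0f,f)_{\mathcal H}+c_2\,\|f\|_{\mathcal H}^2,
\]
with $c_1=-\tfrac{4C^2}{(d-1)^2\sigma^2}<0$ and $c_2=\tfrac{C^2}{2}\ge 0$; note $(L_0f,f)_{\mathcal H}\le 0$, so $c_1(L_0f,f)_{\mathcal H}\ge 0$ and the estimate is non-vacuous, while $c_1\in\mathbb R$ is admissible in Theorem~\ref{Pertubation_Theorem}.

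Applying Theorem~\ref{Pertubation_Theorem} then yields that $(\widetilde L,D_2)=(L_0+T,D_2)$ is essentially m-dissipative on $L^2(\mathbb M,\mathrm{d}x\otimes\nu)$, and transporting back by the unitary $U$ gives essential m-dissipativity of $(L,D_2)$ on $L^2(\mathbb M,\mu)$. The only genuinely delicate point — the ``main obstacle'' in disguise — is the relative bound above: one has to recognise that the truly unbounded, first-order-in-$\omega$ part of $T$ is controlled not by $L_0$ as an operator but by the spherical Dirichlet form $-(L_0\,\cdot,\cdot)_{\mathcal H}$, which is precisely the quantity appearing in the perturbation theorem, and that the multiplicative part $\tfrac12\nabla V\cdot\omega$ can then be absorbed into the $\|f\|^2$-term exactly because global Lipschitz continuity of $V$ makes $\nabla V$ bounded.
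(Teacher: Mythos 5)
Your proof is correct and follows essentially the same route as the paper: decompose $\widetilde L=L_0+T$ on $D_2$, note $(L_0,D_2)$ is essentially m-dissipative as a dissipative extension of $(L_0,D_1)$, show $T$ is antisymmetric (hence dissipative), establish the relative form bound $\|Tf\|^2\le c_1(L_0f,f)+c_2\|f\|^2$ using the pointwise spherical estimate $|\text{grad}_{\mathbb S}\Phi(f)|^2\le |z|^2\,|\text{grad}_{\mathbb S}f|^2_{T\mathbb S}$ and the global bound $\|\nabla V\|_\infty<\infty$, and apply Theorem~\ref{Pertubation_Theorem}. The only cosmetic differences are that you verify antisymmetry of $T$ directly via Lemma~\ref{Lm_phi_sphere} rather than by subtracting the two antisymmetric operators $\widetilde A$ and $\omega\cdot\nabla_x$, and you keep the sharper constant with the $(d-1)^{-2}$ factor.
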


\begin{proof}
Let $(K,D_2)$ be defined as $K=L_0$ on $D_2$. Then $(K,D_2)$ is a dissipative extension of $(L_0,D_1)$, hence it is essentially m-dissipative on $L^2(\mathbb{M},\mathrm{d}x \otimes \nu)$ by Lemma \ref{Lm_ess_mdiss_localLip_1}. Define $(T,D_2)$ as
\begin{align*}
T= -\text{grad}_{\mathbb{S}}\Phi \cdot \nabla_\omega + \frac{1}{2} \, \nabla V \cdot \omega ~~\mbox { on } D_2.
\end{align*}
Clearly, $(\widetilde{A},D_2)$ and $(\omega \cdot \nabla_x,D_2)$ are antisymmetric on $L^2(\mathbb{M},\mathrm{d}x \otimes \nu)$. Thus $(T,D_2)$ is antisymmetric on $L^2(\mathbb{M},\mathrm{d}x \otimes \nu)$, hence also dissipative. Now note that for $\phi:\mathbb{S} \rightarrow \mathbb{R}$ defined as in Lemma \ref{Lm_phi_sphere} and all $h \in C^\infty(\mathbb{S})$ we have
\begin{align*}
\text{grad}_\mathbb{S} \, \phi (h) = \left((I-\omega \otimes \omega) z , \nabla_w h\right)_{\text{euc}} = \left(z,\text{grad}_\mathbb{S}h\right)_\text{euc}
\end{align*}
So it holds
\begin{align*}
\left| \text{grad}_\mathbb{S} \, \phi (h) \right|^2 \leq |z|^2 \left(\text{grad}_\mathbb{S}h, \text{grad}_\mathbb{S}h\right)_{T\mathbb{S}}.
\end{align*}
Having the latter inequality in mind and using Green's formula, we obtain for all $f \in D_2$ that
\begin{align*}
\|Tf\|^2 \leq 2 \,\| \text{grad}_\mathbb{S} \Phi (f)\|^2 + \frac{c^2}{2}\,\|f\|^2 &\leq 2 \,c^2 \left(-\Delta_\mathbb{S} f,f \right)_{L^2(\mathbb{M},\mathrm{d}x \otimes \nu)} + \frac{c^2}{2}\,\|f\|^2 \\
&=4 \,\frac{c^2}{\sigma^2} \left(-K f,f \right)_{L^2(\mathbb{M},\mathrm{d}x \otimes \nu)} + \frac{c^2}{2}\,\|f\|^2
\end{align*}
Here $c< \infty$ is the $L^\infty$-bound of $\nabla V$ on $\mathbb{R}^d$. So the claim follows by applying Kato pertubation, see Theorem \ref{Pertubation_Theorem}.
\end{proof}

Now we are arriving at the first intermediate result, similar to \cite[Theo.~2.1]{CG10}.

\begin{Pp} \label{Thm_essential_mdissipativity_sphericalvelocityLangevin_globLipschitz}
Assume that $V:\mathbb{R}^d \rightarrow \mathbb{R}$ is globally Lipschitz continuous. Then $(L,C_c^\infty(\mathbb{M}))$ is essentially m-dissipative on $L^2(\mathbb{M},\mu)$.
\end{Pp}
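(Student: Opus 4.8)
\emph{Proof proposal.} The plan is to reduce the assertion to Lemma \ref{Lm_ess_mdiss_localLip_2}, which already provides essential m-dissipativity of $(L,D_2)$ on $L^2(\mathbb{M},\mu)$, by showing that the smaller predomain $C_c^\infty(\mathbb{M})$ produces the same closed operator. By Lemma \ref{Lm_properties_of_L} the operator $(L,C_c^\infty(\mathbb{M}))$ is dissipative, hence closable; write $(\overline{L},D(\overline{L}))$ for its closure, which is again dissipative. The closure $\overline{(L,D_2)}$ is m-dissipative, i.e.\ maximal among dissipative operators. Therefore it suffices to establish the operator inclusion $\overline{(L,D_2)}\subseteq\overline{L}$: then $\overline{L}$ is a dissipative extension of a maximal dissipative operator, so $\overline{L}=\overline{(L,D_2)}$ is m-dissipative, and by the Lumer--Phillips theorem (see \cite{Gol85} or \cite{LP61}) $(L,C_c^\infty(\mathbb{M}))$ is essentially m-dissipative and its closure generates a $C_0$-contraction semigroup on $L^2(\mathbb{M},\mu)$. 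For the inclusion it is enough to exhibit, for every $h\in D_2$, a sequence $h_n\in C_c^\infty(\mathbb{M})$ with $h_n\to h$ and $Lh_n\to Lh$ in $L^2(\mathbb{M},\mu)$.

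Since $D_2$ is spanned by products $\phi\otimes g$ with $\phi\in H^{1,\infty}_c(\mathbb{R}^d)$ and $g\in C^\infty(\mathbb{S})$, and since $D_1=C_c^\infty(\mathbb{R}^d)\otimes C^\infty(\mathbb{S})\subseteq C_c^\infty(\mathbb{M})$ (a product $\phi\otimes g$ with $\phi\in C_c^\infty(\mathbb{R}^d)$ is smooth on $\mathbb{M}$ and supported in the compact set $\mathrm{supp}(\phi)\times\mathbb{S}$), I would mollify only in the $x$-variable: for a standard mollifier $\rho_\varepsilon$ set $\phi_\varepsilon=\rho_\varepsilon*\phi\in C_c^\infty(\mathbb{R}^d)$, so $\phi_\varepsilon\otimes g\in D_1\subseteq C_c^\infty(\mathbb{M})$. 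All $\phi_\varepsilon$ are supported in a fixed compact set, $\phi_\varepsilon\to\phi$ and $\nabla\phi_\varepsilon=\rho_\varepsilon*\nabla\phi\to\nabla\phi$ in $L^2(\mathbb{R}^d,\mathrm{d}x)$, and because $\|\phi_\varepsilon\|_\infty\le\|\phi\|_\infty$, $\|\nabla\phi_\varepsilon\|_\infty\le\|\nabla\phi\|_\infty$, dominated convergence together with the local boundedness of $e^{-V}$ upgrades both to convergence in $L^2(e^{-V}\mathrm{d}x)$. Using $L=\omega\cdot\nabla_x-\mathrm{grad}_{\mathbb{S}}\Phi\cdot\nabla_\omega+\tfrac{\sigma^2}{2}\Delta_{\mathbb{S}}$,
\[
L(\phi_\varepsilon\otimes g)=(\omega\cdot\nabla_x\phi_\varepsilon)\,g-\phi_\varepsilon\,\bigl(\mathrm{grad}_{\mathbb{S}}\Phi\cdot\nabla_\omega g\bigr)+\tfrac{\sigma^2}{2}\,\phi_\varepsilon\,(\Delta_{\mathbb{S}}g),
\]
and I would check that each term converges in $L^2(\mathbb{M},\mu)$ to the corresponding term of $L(\phi\otimes g)$: the first and third because $\nabla\phi_\varepsilon,\phi_\varepsilon$ converge in $L^2(e^{-V}\mathrm{d}x)$ while $\omega$, $g$, $\Delta_{\mathbb{S}}g$ are bounded; the mixed second term because $\mathrm{grad}_{\mathbb{S}}\Phi$ depends on $x$ only through $\nabla V(x)$, which is bounded since $V$ is globally Lipschitz, so that $\phi_\varepsilon\,\nabla V\to\phi\,\nabla V$ in $L^2(e^{-V}\mathrm{d}x)$. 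Hence $\phi_\varepsilon\otimes g\to\phi\otimes g$ and $L(\phi_\varepsilon\otimes g)\to L(\phi\otimes g)$ in $L^2(\mathbb{M},\mu)$; by linearity this yields $D_2\subseteq D(\overline{L})$ with $\overline{L}|_{D_2}=L|_{D_2}$, i.e.\ $\overline{(L,D_2)}\subseteq\overline{L}$, which completes the reduction.

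The only genuinely non-formal point is this mollification step, and within it the treatment of the mixed vector field $\mathrm{grad}_{\mathbb{S}}\Phi\cdot\nabla_\omega$: it is precisely the global Lipschitz hypothesis on $V$ (equivalently $\nabla V\in L^\infty(\mathbb{R}^d)$) that makes $\phi_\varepsilon\nabla V$ converge in $L^2(e^{-V}\mathrm{d}x)$, and one must be careful to distinguish the reference measure $\mathrm{d}x$, in which mollifiers behave well, from the weighted measure $e^{-V}\mathrm{d}x$, passing between them via local boundedness of $e^{-V}$ and the uniform sup-bounds on $\phi_\varepsilon$ and $\nabla\phi_\varepsilon$. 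Everything else --- maximality of m-dissipative operators and Lumer--Phillips --- is soft and requires no computation.
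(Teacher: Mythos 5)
Your proposal is correct and follows essentially the same route as the paper: both reduce to Lemma \ref{Lm_ess_mdiss_localLip_2} by showing $D_2$ lies in the closure of the smooth-predomain operator, approximating $\phi\in H^{1,\infty}_c(\mathbb{R}^d)$ by compactly supported smooth functions in the graph norm (the paper routes this through $(L,D_1)$ and the fact that a dissipative extension of an essentially m-dissipative operator is essentially m-dissipative; you instead invoke maximality of m-dissipative operators, which is the same idea). Your explicit mollification and term-by-term convergence check, including the role of the global Lipschitz bound on $\nabla V$ for the $\mathrm{grad}_{\mathbb{S}}\Phi\cdot\nabla_\omega$ term, is just a spelled-out version of what the paper leaves to the reader.
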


\begin{proof}
Since $(L,C_c^\infty(\mathbb{M}))$ is a dissipative extension of $(L,D_1)$, it suffices to prove that $(L,D_1)$ is essentially m-dissipative on $L^2(\mathbb{M},\mu)$. On the other hand, the latter statement follows by showing that $(L,D_2)$ is contained in the closure of $(L,D_1)$ and by using Lemma \ref{Lm_ess_mdiss_localLip_2} from above. Therefore, let $f \otimes g$ be a pure tensor with $f \in H^{1,\infty}_c(\mathbb{R}^d)$ and $g \in C^\infty(\mathbb{S})$. In particular, there exists $f_n \in C^\infty_c(\mathbb{R}^d)$, $n \in \mathbb{N}$, all with support contained in some common compact set such that $f_n \rightarrow f$ and $\partial_{x_i} f_n \rightarrow \partial_{x_i}f$ in $L^2(\mathbb{R}^d,\mathrm{d}x)$ as $n \rightarrow \infty$ where $i=1,\ldots,d$. Consequently, we have $f_n \rightarrow f$ and $\partial_{x_i} f_n \rightarrow \partial_{x_i}f$ even in $L^2(\mathbb{R}^d,e^{-V}\mathrm{d}x)$ as $n \rightarrow \infty$ for $i=1,\ldots,d$. This again easily implies that $f_n \otimes g \rightarrow f \otimes g$ and $L(f_n \otimes g) \rightarrow L(f \otimes g)$ in $L^2(\mathbb{M},\mu)$ as $n \rightarrow \infty$, finishing the proof.
\end{proof}

Finally, here is the core result of this section. As mentioned above, the strategy for its proof goes back to \cite[Cor.~2.3]{CG10}.

\begin{Thm} \label{Thm_essential_mdissipativity_sphericalvelocityLangevin_locLipschitz}
Assume that $V:\mathbb{R}^d \rightarrow \mathbb{R}$ is locally Lipschitz continuous and bounded from below. Then $(L,C_c^\infty(\mathbb{M}))$ is essentially m-dissipative on $L^2(\mathbb{M},\mu)$.
\end{Thm}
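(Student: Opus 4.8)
The plan is to bootstrap from the globally Lipschitz case, Proposition~\ref{Thm_essential_mdissipativity_sphericalvelocityLangevin_globLipschitz}, by a localization of the potential together with a finite propagation speed estimate. For $n\in\mathbb{N}$ let $\ell_n<\infty$ be a Lipschitz constant of $V$ on the compact convex set $\overline{B_n(0)}$ and define the McShane-type extension $V_n(x):=\inf_{y\in\overline{B_n(0)}}\big(V(y)+\ell_n|x-y|\big)$, $x\in\mathbb{R}^d$. Then $V_n$ is globally Lipschitz, $V_n=V$ on $\overline{B_n(0)}$ and $V_n\geq\inf_{\mathbb{R}^d}V=:m>-\infty$. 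Write $\mu_n=e^{-V_n}\mathrm{d}x\otimes\nu$ and let $(L_n,C_c^\infty(\mathbb{M}))$, $L_n=S-A_n$, be the corresponding spherical velocity Langevin operator. By Proposition~\ref{Thm_essential_mdissipativity_sphericalvelocityLangevin_globLipschitz}, $(L_n,C_c^\infty(\mathbb{M}))$ is essentially m-dissipative on $L^2(\mathbb{M},\mu_n)$; let $(\overline{L_n},D(\overline{L_n}))$ be its closure and $(T_n(t))_{t\geq0}$ the generated $C_0$-contraction semigroup. Since $L_n1=0$ and $(L_n,C_c^\infty(\mathbb{M}))$ satisfies the positive maximum principle, $(T_n(t))_{t\geq0}$ is Markovian; in particular $\|T_n(t)h\|_{L^\infty(\mathbb{M})}\leq\|h\|_{L^\infty(\mathbb{M})}$ for $h\in C_c^\infty(\mathbb{M})$.

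Since $(L,C_c^\infty(\mathbb{M}))$ is dissipative by Lemma~\ref{Lm_properties_of_L}, it is closable and $\mathcal{R}(I-\overline{L})$ is closed; by the Lumer--Phillips theorem it therefore suffices to show that $\mathcal{R}(I-\overline{L})$ is dense, that is, that every $h\in C_c^\infty(\mathbb{M})$, say with $x$-support contained in $B_{n_0}(0)\times\mathbb{S}$, lies in $\mathcal{R}(I-\overline{L})$. Fix such an $h$ and, for $n\geq n_0$, set $v_n:=(I-\overline{L_n})^{-1}h=\int_0^\infty e^{-t}T_n(t)h\,\mathrm{d}t\in D(\overline{L_n})$. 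The key point is a finite propagation speed estimate: because the transport field $\omega\cdot\nabla_x$ moves points in the $x$-variable with Euclidean speed $|\omega|=1$, one expects $T_n(t)h(x,\omega)=0$ whenever $|x|>n_0+t$. This is proved by a moving-cutoff energy estimate: for a bounded smooth nondecreasing $\phi$ vanishing on $(-\infty,0]$ and $\chi_t(x):=\phi(|x|-n_0-t)$, differentiate $t\mapsto\int_{\mathbb{M}}\chi_t^2\,(T_n(t)h)^2\,\mathrm{d}\mu_n$; using that $S$ is nonpositive, that $\int_{\mathbb{M}}A_n(\cdot)\,\mathrm{d}\mu_n=0$ (so $\int\chi_t^2 f\,A_nf\,\mathrm{d}\mu_n=-\tfrac12\int(A_n\chi_t^2)f^2\,\mathrm{d}\mu_n$), and that $A_n\chi_t^2=-\omega\cdot\nabla_x\chi_t^2$ satisfies $|A_n\chi_t^2|\leq2\chi_t\,\phi'(|x|-n_0-t)$, which exactly cancels the $\partial_t\chi_t^2$-term, one finds this map is nonincreasing; it vanishes at $t=0$ since $h$ is $x$-supported in $B_{n_0}(0)$, hence vanishes for all $t\geq0$. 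Combining the support property with the $L^\infty$-bound gives $|v_n(x,\omega)|\leq e^{-(|x|-n_0)_+}\|h\|_{L^\infty(\mathbb{M})}$, and since $e^{-V}\leq e^{-m}$ this yields $\|v_n\|_{L^2(\mathbb{M},\mu)}^2\leq e^{-m}\|h\|_{L^\infty(\mathbb{M})}^2\int_{\mathbb{R}^d}e^{-2(|x|-n_0)_+}\mathrm{d}x=:C<\infty$, a bound uniform in $n$.

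Now pass to the limit. By weak compactness in $L^2(\mathbb{M},\mu)$ there is a subsequence with $v_{n_k}\rightharpoonup v$ in $L^2(\mathbb{M},\mu)$. Let $u\in C_c^\infty(\mathbb{M})$ be arbitrary with $x$-support in $B_N(0)\times\mathbb{S}$ and $N\geq n_0$. For $n\geq N$ the operators $L_n$ and $L$, and the measures $\mu_n$ and $\mu$, coincide on $B_N(0)\times\mathbb{S}$; integrating by parts (Lemma~\ref{Lm_properties_of_L}) and using $(I-\overline{L_n})v_n=h$ with $h$ $x$-supported in $B_{n_0}(0)\subset B_N(0)$, all occurring integrands being $x$-supported in $B_N(0)$, one gets $(v_n,(I-S-A)u)_{L^2(\mu)}=(h,u)_{L^2(\mu)}$ for all such $n$. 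Letting $n_k\to\infty$ and using $v_{n_k}\rightharpoonup v$ gives $(v,(I-L^*)u)_{L^2(\mu)}=(h,u)_{L^2(\mu)}$ for all $u\in C_c^\infty(\mathbb{M})$, where $L^*$ is the adjoint of $(L,C_c^\infty(\mathbb{M}))$ and $L^*u=(S+A)u$ on $C_c^\infty(\mathbb{M})$. Since $C_c^\infty(\mathbb{M})$ is dense this means $v\in D((L^*)^*)=D(\overline{L})$ and $(I-\overline{L})v=h$ by Lemma~\ref{Lm_basics_closed_operators}. As the admissible $h$ form a dense subset of $L^2(\mathbb{M},\mu)$ and $\mathcal{R}(I-\overline{L})$ is closed, $(L,C_c^\infty(\mathbb{M}))$ is essentially m-dissipative on $L^2(\mathbb{M},\mu)$.

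The main obstacle is the finite propagation speed (equivalently, the uniform tail estimate on $v_n$): the energy identity above must be carried out with full rigor, justifying the differentiation under the integral, the integrability of all terms, and the identity $\int_{\mathbb{M}}A_n\big(\chi_t^2(T_n(t)h)^2\big)\,\mathrm{d}\mu_n=0$ for these functions of non-compact support. A convenient route is to establish the support property first for $V_n\in C^\infty(\mathbb{R}^d)$ (where Lemma~\ref{Lm_Hypoellipticity_L} and hypoellipticity make $t\mapsto T_n(t)h$ smooth), and then to pass to globally Lipschitz $V_n$ by the density arguments used in Lemma~\ref{Lm_ess_mdiss_localLip_1} and Proposition~\ref{Thm_essential_mdissipativity_sphericalvelocityLangevin_globLipschitz}; alternatively, finite propagation speed follows directly from the path representation $T_n(t)h(x,\omega)=\mathbb{E}^{(x,\omega)}[h(X^{(n)}_t)]$ once the associated process is available. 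The Markov property of $(T_n(t))_{t\geq0}$, used only for the $L^\infty$-bound, likewise follows from the positive maximum principle together with $\overline{L_n}1=0$.
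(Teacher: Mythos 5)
Your proposal takes a genuinely different route from the paper. The paper (following Conrad--Grothaus) proves density of $\mathcal{R}(I-L)$ by a direct algebraic localization: after normalizing $V\geq 0$, for $g$ with compact $x$-support one picks nested cut-offs $\varphi,\psi$ with $\varphi=1$ on $\mathrm{supp}(g)$ and $\psi=1$ on a neighbourhood of $\mathrm{supp}(\varphi)$, uses the identity $(I-L_V)(\varphi f)=\varphi(I-L_{\psi V})f+f\,\omega\cdot\nabla_x\varphi$ valid on $\mathrm{supp}(\varphi)$, and controls everything in $L^2(\mu_V)$ via the pointwise domination $e^{-V}\leq e^{-\psi V}$, the dissipativity of $L_{\psi V}$, and the already-established essential m-dissipativity for the globally Lipschitz potential $\psi V$. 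There are no semigroups, resolvents, $L^\infty$-bounds, or finite propagation speed arguments; everything stays at the operator level. Your scheme instead builds the resolvents $v_n=(I-\overline{L_n})^{-1}h$, aims at a uniform $L^2(\mu)$ bound via a sub-Markov property and a finite propagation speed estimate, and passes to a weak limit.

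The final step of your argument is incorrect as written, and this is the most serious problem. From
\begin{align*}
(v,(I-L^*)u)_{L^2(\mu)}=(h,u)_{L^2(\mu)}\qquad\text{for all }u\in C_c^\infty(\mathbb{M})
\end{align*}
you infer $v\in D((L^*)^*)=D(\overline{L})$, citing Lemma~\ref{Lm_basics_closed_operators}. But that lemma only gives $(L^*)^*=\overline{L}$; to conclude $v\in D((L^*)^*)$ one would need the identity $(L^*u,v)=(u,v-h)$ for \emph{all} $u\in D(L^*)$, not merely for $u\in C_c^\infty(\mathbb{M})$. Density of $C_c^\infty(\mathbb{M})$ in $\H$ is irrelevant; what you would need is that $C_c^\infty(\mathbb{M})$ is a \emph{core} for $(L^*,D(L^*))$. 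Equivalently: the weak equation shows $v\in D\big(((S+A)\vert_{C_c^\infty})^*\big)$, and one has $\overline{L}\subseteq((S+A)\vert_{C_c^\infty})^*$, but this inclusion can be strict; equality holds precisely when $C_c^\infty(\mathbb{M})$ is a core for $L^*$, and this is essentially the very essential m-dissipativity statement you are trying to prove (for the time-reversed generator $S+A$). So this step is circular, not merely incomplete, and no amount of rigor in the finite propagation speed estimate rescues it. A remedy would be to replace the weak limit by a strong resolvent convergence argument, or (as the paper does) to abandon exact solvability and work directly with approximate solutions from $C_c^\infty(\mathbb{M})$.

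Beyond this, the two pillars you rely on would each need substantial work that is only gestured at. Sub-Markovianity of the $L^2$-contraction semigroup $T_n$ does not follow from the positive maximum principle for $L_n\vert_{C_c^\infty}$ plus $\overline{L_n}1=0$; one would need to verify a Dirichlet-operator property such as $(\overline{L_n}f,(f-1)^+)_{L^2(\mu_n)}\leq 0$ for $f\in D(\overline{L_n})$, or invoke a process representation whose existence is itself nontrivial. Similarly, the moving-cutoff energy identity requires integration by parts for non-compactly-supported functions and differentiation under the integral, with regularity of $T_n(t)h$ that is not available before you establish the very support bound you seek. You acknowledge these obstacles, but they are real; the paper's argument is precisely designed to avoid all of them.
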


\begin{proof}
For notational convenience, we write $\mu_V$ and $L_V$ instead of $\mu$ and $L$ in this proof in order to indicate the dependence on the potential. W.l.o.g.~we assume that $V \geq 0$. Let $0 \not= g \in C_c^\infty(\mathbb{M})$ and let $\varepsilon >0$. Choose $\varphi \in C_c^\infty(\mathbb{R}^d)$ and $\psi \in C_c^\infty(\mathbb{R}^d)$ such that $\varphi = 1$ on $\text{supp}(g)$ and $\psi=1$ on $\text{supp}(\varphi)$ and $0 \leq \varphi \leq \psi \leq 1$. Furthermore, let $f \in C_c^\infty(\mathbb{M})$ at first be arbitrary. By using that $\psi V \leq V$ we get
\begin{align*}
&\left\|(I-L_V)(\varphi f) - g \right\|_{L^2(\mu_V)} \\
&\leq \left\|\varphi \left((I-L_{\psi V})f - g\right) \right\|_{L^2(\mu_{\psi V})} + \, \text{sup}_{x \in \mathbb{R}^d} \left| \nabla \varphi(x) \right| \|f\|_{L^2(\mu_{\psi V})} \\
&\leq \left\|(I-L_{\psi V})f - g \right\|_{L^2(\mu_{\psi V})} + \, \text{sup}_{x \in \mathbb{R}^d} \left| \nabla \varphi(x) \right| \, \left\|(I-L_{\psi V})f \right\|_{L^2(\mu_{\psi V})}.
\end{align*}
Here the last inequality is due to the dissipativity of $(L_{\psi V},C_c^\infty(\mathbb{M}))$ in $L^2(\mu_{\psi V})$. Now fix $\varphi$ such that 
\begin{align*}
\text{sup}_{x \in \mathbb{R}^d} \left| \nabla \varphi(x) \right| \leq \frac{\varepsilon}{4 ~\|g\|_{L^2(\mu_0)}}
\end{align*}
and let $\psi$ be as required above. For the construction of $\varphi$, see e.g.~the choice of the sequence $(\varphi_n)_{n \in \mathbb{N}}$ in the proof of Lemma \ref{Lm_verification_D5_spherical_velocity_Langevin}. Note that $\psi V$ is globally Lipschitz continuous, thus by Theorem \ref{Thm_essential_mdissipativity_sphericalvelocityLangevin_globLipschitz} and the Lumer-Phillips theorem, there exists $f \in C_c^\infty(\mathbb{M})$ satisfying
\begin{align*}
&\left\|(I-L_{\psi V})f - g \right\|_{L^2(\mu_{\psi V})} \leq \frac{\varepsilon}{2},~\left\|(I-L_{\psi V})f \right\|_{L^2(\mu_{\psi V})}\leq 2 \,\|g\|_{L^2(\mu_{\psi V})}.
\end{align*}
Altogether, we get $\left\|(I-L_V)(\varphi f) - g \right\|_{L^2(\mu_V)} \leq \varepsilon$. Hence $(I-L_V)(C_c^\infty(\mathbb{M}))$ is dense in $L^2(\mu_V)$ and the claim follows.
\end{proof}

\begin{Rm}
In order to prove Theorem \ref{Thm_essential_mdissipativity_sphericalvelocityLangevin_locLipschitz}, remember that we started with essential m-dissipativity of $L_0$ in Lemma \ref{Lm_ess_mdiss_localLip_1}. For the latter, the hypoellipticity statement of Theorem \ref{Thm_Essential_mdissipativity_smoothcase} was used. We remark that showing essential m-dissipativity of $L_0$ can alternatively be proven without using any hypoellipticity argument. This can easily be seen by using similar arguments as presented at the beginning of the proof of Theorem 2.1 in \cite{CG10}, or the beginning of Section 4 in \cite{CG08} respectively.
\end{Rm}

\section{Appendix}

Here we shortly discuss and recall some a priori results obtained by Dolbeault, Mouhot and Schmeiser in \cite{DMS10} as well as give a useful corollary as consequence of H"{o}rmander's hypoellipticity theorem. Both are needed Section \ref{Hypocoercivity_section_Spherical_velocity_Langevin} or Section \ref{Section_Construction_Semigroup} respectively. We start with the second one. This statement is well-known, at least in slightly different form, see \cite[Theo.~3]{IK74} or \cite[App.~A]{Cal12}. However, we have not found a precise reference. So we give the short proof below.

\begin{Pp} \label{Pp_consequence_Hormander_hypoellipticity}
Let $(\mathbb{M},g)$ be a smooth Riemannian manifold equipped with the induced Riemannian measure $\lambda_g$. Let $A$ be a differential operator of the form
\begin{align*}
A= \sum_{i=1}^r X_i^2 + X_0 + a
\end{align*}
where all $X_0,\ldots,X_r$, $r \in \mathbb{N}$, are smooth vector fields on $\mathbb{M}$ and $a \in C^\infty(\mathbb{M})$. Assume that $X_0,\ldots,X_r$ are satisfying H"{o}rmander's condition on $\mathbb{M}$, this is,
\begin{align*}
L(X_0,X_1,\ldots X_r)(p) = \text{dim}~T_p\mathbb{M} ~~\mbox{ at each point $p \in \mathbb{M}$}.
\end{align*}
Let $f \in L^1_{\text{loc}}(\mathbb{M},\lambda_g)$ such that
\begin{align*}
\int_\mathbb{M} A\psi \, f \, \mathrm{d}\lambda_g= 0 ~~\mbox{ for all } \psi \in C_c^\infty(\mathbb{M}).
\end{align*}
Then $f$ can already be represented by an element from $C^\infty(\mathbb{M})$.
\end{Pp}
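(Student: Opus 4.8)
The plan is to rewrite the hypothesis as a distributional identity for the formal adjoint of $A$ and then to invoke H\"ormander's hypoellipticity theorem. Viewing $f$ as the distribution $\psi \mapsto \int_{\mathbb{M}} \psi f \,\mathrm{d}\lambda_g$, the assumption $\int_{\mathbb{M}} A\psi \, f \,\mathrm{d}\lambda_g = 0$ for all $\psi \in C_c^\infty(\mathbb{M})$ says precisely that $A^{*}f = 0$ in $\mathcal{D}'(\mathbb{M})$, where $A^{*}$ denotes the formal transpose of $A$ with respect to $\lambda_g$. Since smoothness of a distribution is a local property, after fixing an atlas it suffices to show that in each chart $A^{*}$ is hypoelliptic (the smooth strictly positive local density of $\lambda_g$ only conjugates the operator and is harmless); then $f\in C^\infty(\mathbb{M})$ follows by patching.

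First I would compute $A^{*}$ explicitly. For a smooth vector field $X$ one has $X^{*} = -X - \operatorname{div}_{\lambda_g}X$, hence $(X^{2})^{*} = (X^{*})^{2} = X^{2} + 2(\operatorname{div}_{\lambda_g}X)\,X + b_X$ with $b_X\in C^\infty(\mathbb{M})$, while $X_0^{*} = -X_0 - \operatorname{div}_{\lambda_g}X_0$ and $a^{*}=a$. Therefore $A^{*} = \sum_{i=1}^{r} X_i^{2} + \widehat{X}_0 + \widehat{a}$ with $\widehat{X}_0 := -X_0 + \sum_{i=1}^{r} 2(\operatorname{div}_{\lambda_g}X_i)\,X_i$ and $\widehat{a}\in C^\infty(\mathbb{M})$. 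In particular $A^{*}$ involves exactly the same vector fields $X_1,\dots,X_r$ entering quadratically as $A$ does, and its drift $\widehat{X}_0$ differs from $-X_0$ only by a $C^\infty(\mathbb{M})$-linear combination of $X_1,\dots,X_r$.

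The one genuine point --- which I expect to be the main obstacle --- is to check that $\{\widehat{X}_0, X_1,\dots,X_r\}$ still satisfies H\"ormander's bracket condition, so that H\"ormander's theorem applies to $A^{*}$. This rests on the observation that the increasing sequence of $C^\infty(\mathbb{M})$-submodules of vector fields obtained by repeatedly adjoining Lie brackets with the generators is unchanged when $X_0$ is replaced by $X_0 + \sum_i h_i X_i$ for arbitrary $h_i\in C^\infty(\mathbb{M})$: the first module already contains every $X_i$, and the Leibniz identity $[h_iX_i,Z] = h_i[X_i,Z] - (Zh_i)X_i$ shows by induction that passing to the modified drift alters each subsequent module only by terms already belonging to it. Since H\"ormander's condition is equivalent to these modules eventually filling out $T_p\mathbb{M}$ at every point $p$, and since replacing a field by its negative is irrelevant, the condition for $\{\widehat{X}_0, X_1,\dots,X_r\}$ is equivalent to the one assumed for $\{X_0,X_1,\dots,X_r\}$ and hence holds.

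Once this is in place, H\"ormander's hypoellipticity theorem for operators that are a sum of squares of vector fields plus a first order and a zeroth order term (see \cite{IK74} and \cite{Cal12}) applies to $A^{*}$ in every coordinate chart, so that any distributional solution of $A^{*}u = 0$ --- in particular $u = f$, equivalently $f$ times the smooth nonvanishing local density of $\lambda_g$ --- is a smooth function there. Covering $\mathbb{M}$ by charts then produces a smooth representative of $f$, which is the claim.
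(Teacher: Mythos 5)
Your proposal is correct and follows essentially the same route as the paper: interpret the weak equation as $A^{*}f=0$ distributionally, compute that $A^{*}$ has the same sum-of-squares structure with drift differing from $-X_0$ only by a $C^\infty$-combination of $X_1,\dots,X_r$, observe this preserves H\"ormander's bracket condition, and apply H\"ormander's theorem locally. The only (cosmetic) difference is that you compute $A^{*}$ intrinsically with respect to $\lambda_g$ and then remark that the chart density merely conjugates the operator, while the paper pushes the vector fields into a chart first, takes the formal dual $A_\varphi^{*}$ with respect to Lebesgue measure, and absorbs the Riemannian volume density into the unknown $h=(f\circ\varphi^{-1})(\mathrm{vol}_g^\varphi\circ\varphi^{-1})$; both devices handle the density correctly and lead to the identical H\"ormander application.
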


\begin{proof} Let $(U,\varphi)$ be a coordinate chart of $\mathbb{M}$. We define the smooth vector fields $Y_0,\ldots,Y_r$ on the open subset $\varphi(U)$ in euclidean space as 
\begin{align*}
Y_{i}\left(h\right)(p)=X_i \left(h \circ \varphi\right)(\varphi^{-1} (p))
\end{align*}
where $h\in C^\infty(\varphi(U))$ and $p \in \varphi(U)$. Furthermore, let $b=a \,\circ\, \varphi^{-1}$ and define the second order differential operator $A_{\varphi}:C^\infty(\varphi(U)) \rightarrow C^\infty(\varphi(U))$ as
\begin{align*}
A_{\varphi}= \sum_{i=1}^r Y_i^2 + Y_0 + b.
\end{align*}
Consequently, $Y_0,\ldots,Y_r$ are satisfying H"{o}rmander's condition at every point of $\varphi(U)$. Now let $A_\varphi^{\,*}$ be the formal dual of $A_\varphi$ with respect to the Lebesgue measure $\mathrm{d}x$ on $\varphi(U)$. It can easily be shown that
\begin{align*}
A_\varphi^{\,*} = \sum_{i=1}^r Y_i^2 + Y_0' + c,
\end{align*}
where $c \in C^\infty(\mathbb{M})$ and $Y_0' + Y_0$ can be written as $\sum_{i=1}^r d_i Y_i$ for some $d_i \in C^\infty(\varphi(U))$, see e.g.~\cite[Eq.~(4.4)]{IK74}. Hence also $Y_0',Y_1,\ldots,Y_r$ are satisfying H"{o}rmander's condition on the whole of $\varphi(U)$. Now for $\psi \in C_c^\infty(\varphi(U))$ we have
\begin{align} \label{Eq_hypoellipticity_general}
\int_{\varphi(U)} A_\varphi \psi \left(f \circ \varphi^{-1}\right)\,\left(\text{vol}_g^{\varphi} \circ \varphi^{-1} \right)\,\mathrm{d}x = \int_{\mathbb{X}} A (\psi \circ \varphi) \,f\, \mathrm{d}\lambda_g =0
\end{align}
where $\text{vol}_g^{\varphi} \in C^\infty(U)$ denotes as usual the Riemannian volume form induced by the metric $g$ on $U$ through $\varphi$. Define $h$ as $h=\left(f \circ \varphi^{-1}\right)\,\left(\text{vol}_g^{\varphi} \circ \varphi^{-1} \right)$ on $\varphi(U)$. Then we get $h\in L^1_{\text{loc}}(\varphi(U),\mathrm{d}x)$ since $f \in L^1_{\text{loc}}(\mathbb{M},\lambda_g)$. So \eqref{Eq_hypoellipticity_general} implies
\begin{align*}
A_\varphi^{\,*}\,h=0,
\end{align*}
understood in the distributional sense on $\varphi(U)$. By H"{o}rmander's theorem we can infer that $h \in C^\infty(\varphi(U))$ since $A_\varphi^{\,*}$ is hypoelliptic, see \cite{Hor67}. Hence $f_{|U} \in C^\infty(U)$ since $\text{vol}_g^{\varphi}$ is strictly positive everywhere on $U$. Thus the claim follows.
\end{proof}

\subsection{Some a priori estimates}

In this subsection we recapitulate and discuss some a priori estimates obtained by Dolbeault, Mouhot and Schmeiser in \cite{DMS10} for a suitable elliptic equation which we need in order to prove hypocoercivity of our spherical velocity Langevin type process.  We assume that $V$ is an element from $C^2(\mathbb{R}^d)$, bounded from below and $e^{-V} \mathrm{d}x$ and is assumed to be a probability measure on $(\mathbb{R}^d,\mathcal{B}(\mathbb{R}^d))$. Here $d \in \mathbb{N}$ is fixed. The Hilbert space $L^2(\mathbb{R}^d,e^{-V}\mathrm{d}x)$ is also abbreviated by $L^2(e^{-V}\mathrm{d}x)$. The canonical scalar product is denoted by $\left( \cdot, \cdot \right)_{L^2(e^{-V}\mathrm{d}x)}$, the induced norm with $\| \cdot \|$. For notational convenience, we introduce the subspace $\mathcal{X}$ via
\begin{align*}
\mathcal{X}=\big\{ u \in L^2(e^{-V}\mathrm{d}x)~|~ u=f - \left(f,1\right)_{L^2(e^{-V}\mathrm{d}x)} \mbox{ for some } f \in C_c^\infty(\mathbb{R}^d) \big\}.
\end{align*}
The desired elliptic equation reads as follows. Assume that $u \in \mathcal{X}$ solves
\begin{align} \label{eq_Elliptic_Equation}
u - c_1 \left( \Delta u - \nabla V \cdot \nabla u \right) = g
\end{align}
in ${L^2(e^{-V}\mathrm{d}x)}$ for some $g \in {L^2(e^{-V}\mathrm{d}x)}$. Note that $\left( g,1 \right)_{L^2(e^{-V}\mathrm{d}x)}=0$ follows automatically. Here $c_1 \in (0,\infty)$ is a fixed constant. We need regularity estimates on $u$ and its first and second derivatives. So we may apply the a priori estimates derived in \cite[Sec.~2]{DMS10}. As mentioned before, domain issues are not considered in \cite{DMS10} and for completeness, we include slight modifications of some proofs in \cite[Sec.~2]{DMS10} below, mainly, in order to guarantee some integration by parts formulas and integrability requirements used therein. Our relevant case, $u \in \mathcal{X}$, causes no deep difficulties. The conditions on $V$ are adapted from \cite[Sec.~2]{DMS10} (or \cite{DMS09} respectively) and read now in similar form as follows.

\begin{Ass}
$ $
\begin{itemize}
\item[(A1)] The potential $V:\mathbb{R}^d \rightarrow \mathbb{R}$ is bounded from below, satisfies $V \in C^{2}(\mathbb{R}^d)$ and $ e^{-V} \mathrm{d}x$ is a probability measure on $(\mathbb{R}^d,\mathcal{B}(\mathbb{R}^d))$.
\item[(A2)] The probability measure $e^{-V}\mathrm{d}x$ satisfies a Poincar\'e inequality of the form
\begin{align*}
\|\nabla f \|^2 \geq \Lambda_{1}  \, \| f - \left(f,1\right)_{L^2(e^{-V}\mathrm{d}x)} \|^2,~ f \in C_c^\infty(\mathbb{R}^d),
\end{align*}
for some $\Lambda_{1} \in (0,\infty)$.
\item[(A3)] There exists $c_2 \in (0,\infty)$ and $c_3 \in [0,\frac{1}{2})$ such that
\begin{align*}
\Delta V \leq c_2 + c_3 \left| \nabla V \right|^2.
\end{align*}
\item[(A4)] It holds $| \nabla V| \in L^2(e^{-V}\mathrm{d}x)$.
\item[(A5)] There exists $c_4 \in (0,\infty)$ such that
\begin{align*}
\left| \nabla W \right| \leq c_4 \left( 1 + \left| \nabla V \right| \right) ,~W:=\sqrt{1+ \left| \nabla V \right|^2}.
\end{align*}\end{itemize}
\end{Ass}

Now we are arriving at the desired lemmas and results from \cite[Sec.~2]{DMS10}, only slightly reformulated.

\begin{Lm} \label{Lm_Appendix_1}
Assume that $(A1)-(A4)$ holds. Then there exists $\Lambda_{2} \in (0,\infty)$ such that for all $u=f- \left(f,1\right)_{L^2(e^{-V}\mathrm{d}x)}$, $f \in C^1_c(\mathbb{R}^d)$, we have
\begin{align*}
\| \nabla u \|^2 \geq \Lambda_{2} \, \| u \, \nabla V \|^2.
\end{align*}
\end{Lm}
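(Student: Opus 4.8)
The plan is to bound $\|u\,\nabla V\|^2 = \int_{\mathbb{R}^d} u^2|\nabla V|^2 e^{-V}\,\mathrm{d}x$ from above by $\|\nabla u\|^2$ via an integration by parts based on the identity $\nabla(e^{-V}) = -(\nabla V)\,e^{-V}$: writing $u^2|\nabla V|^2 e^{-V} = -u^2\,\nabla V\cdot\nabla(e^{-V})$ and integrating by parts produces a term involving $\nabla u$, a term involving $\Delta V$, and (since $u$ is not compactly supported) a correction term that I will kill by a cut-off. Assumption (A3) then lets me absorb the $\Delta V$-term, and (A2) converts the remaining $\|u\|^2$ into $\|\nabla u\|^2$.

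Concretely, I fix $u = f - (f,1)_{L^2(e^{-V}\mathrm{d}x)}$ with $f\in C^1_c(\mathbb{R}^d)$; then $u$ is bounded, $\nabla u=\nabla f$ has compact support, $\|u\|^2<\infty$ since the measure is finite, and $u^2|\nabla V|^2 e^{-V}\in L^1(\mathrm{d}x)$ by (A4). Choosing cut-offs $\chi_R\in C_c^\infty(\mathbb{R}^d)$ with $0\le\chi_R\le1$, $\chi_R\equiv1$ on $B_R$, $\text{supp}(\chi_R)\subset B_{2R}$ and $|\nabla\chi_R|\le C/R$, an integration by parts of $\int\chi_R u^2|\nabla V|^2 e^{-V}\mathrm{d}x = -\int\chi_R u^2\,\nabla V\cdot\nabla(e^{-V})\,\mathrm{d}x$ (legitimate because $\chi_R u^2\nabla V$ is $C^1$ with compact support) gives
\begin{align*}
\int_{\mathbb{R}^d}\chi_R\,u^2|\nabla V|^2 e^{-V}\mathrm{d}x = \int_{\mathbb{R}^d}\Big(2\chi_R\,u\,\nabla u\cdot\nabla V + \chi_R\,u^2\,\Delta V + u^2\,\nabla\chi_R\cdot\nabla V\Big)e^{-V}\mathrm{d}x.
\end{align*}
Using (A3) in the form $\chi_R u^2\Delta V\le \chi_R u^2(c_2 + c_3|\nabla V|^2)\le c_2 u^2 + c_3 u^2|\nabla V|^2$ before passing to the limit, and letting $R\to\infty$ — the cross term converges by dominated convergence since its integrand has compact support; the left-hand side converges by dominated convergence with the $L^1$-majorant from (A4); and $\big|\int u^2\nabla\chi_R\cdot\nabla V\,e^{-V}\mathrm{d}x\big|\le \tfrac{C}{R}\|u\|_\infty^2\,\big\|\,|\nabla V|\,\big\|_{L^1(e^{-V}\mathrm{d}x)}\to0$ because $|\nabla V|\in L^2(e^{-V}\mathrm{d}x)\subset L^1(e^{-V}\mathrm{d}x)$ — I obtain
\begin{align*}
\|u\,\nabla V\|^2 \le 2\,(u\,\nabla u,\nabla V)_{L^2(e^{-V}\mathrm{d}x)} + c_2\,\|u\|^2 + c_3\,\|u\,\nabla V\|^2.
\end{align*}

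To finish, I apply Cauchy--Schwarz and Young: $2(u\,\nabla u,\nabla V)_{L^2(e^{-V}\mathrm{d}x)}\le 2\|\nabla u\|\,\|u\,\nabla V\|\le \eta^{-1}\|\nabla u\|^2 + \eta\|u\,\nabla V\|^2$ for any $\eta>0$, and I choose $\eta=(1-c_3)/2\in(0,\tfrac12)$, which is admissible since $c_3<\tfrac12$ by (A3) and makes $1-\eta-c_3=(1-c_3)/2>0$. Absorbing the $\|u\,\nabla V\|^2$-terms and then invoking the Poincar\'e inequality (A2) in the form $\|u\|^2\le\Lambda_1^{-1}\|\nabla u\|^2$ (valid because $\nabla u=\nabla f$ and $u=f-(f,1)_{L^2(e^{-V}\mathrm{d}x)}$) yields $(1-\eta-c_3)\|u\,\nabla V\|^2\le\big(\eta^{-1}+c_2\Lambda_1^{-1}\big)\|\nabla u\|^2$, i.e. the claim with $\Lambda_2 = (1-\eta-c_3)\big(\eta^{-1}+c_2\Lambda_1^{-1}\big)^{-1}$ (no optimization over $\eta$ is needed). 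The only genuinely delicate point is the rigorous justification of the integration by parts together with the passage to the limit $R\to\infty$, and this is exactly where the extra integrability hypothesis (A4) enters; everything else is a routine application of Young's and the Poincar\'e inequality.
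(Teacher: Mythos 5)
Your proof is correct and reaches the same conclusion as the paper, but it is organized differently. The paper first establishes an intermediate inequality for compactly supported $u\in C^1_c(\mathbb{R}^d)$ (which picks up an extra error term $-\tfrac{c_2}{2}(u,1)^2_{L^2(e^{-V}\mathrm{d}x)}$ because such $u$ need not have zero mean), and then treats the actual case $u=f-(f,1)_{L^2(e^{-V}\mathrm{d}x)}$ by a \emph{second} approximation, replacing the constant $(f,1)$ by $\varphi_n(f,1)$ with cut-offs $\varphi_n$ and passing to the limit by dominated convergence. You instead work directly with the zero-mean $u$, inserting the cut-off $\chi_R$ \emph{inside} the integration by parts and sending $R\to\infty$; because $(u,1)=0$ from the start, the error term never appears and no secondary approximation is needed. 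Both routes hinge on the same three ingredients — the integration-by-parts identity against $e^{-V}$, (A3) to absorb $\Delta V$, and (A2) to convert $\|u\|^2$ into $\|\nabla u\|^2$ — so they are close in spirit, but your arrangement is cleaner and gives a more self-contained argument (the paper outsources the core computation to Lemma 6 of \cite{DMS10}).

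One small point worth stating explicitly: (A2) is hypothesized only for $f\in C^\infty_c(\mathbb{R}^d)$, whereas you invoke it for $f\in C^1_c(\mathbb{R}^d)$. The paper covers this with a one-line density argument — since $V$ is bounded below, $L^2(\mathrm{d}x)$-convergence of $f_n\to f$ and $\nabla f_n\to\nabla f$ on a common compact set forces convergence in $L^2(e^{-V}\mathrm{d}x)$, so the inequality passes from $C^\infty_c$ to $C^1_c$ (indeed to $H^{1,2}(\mathbb{R}^d,\mathrm{d}x)$). You should include this remark, but it does not affect the validity of the rest. Everything else — the legitimacy of the integration by parts (the vector field $\chi_R u^2\nabla V$ is $C^1$ with compact support and $e^{-V}\in C^2$), the use of (A4) to supply the $L^1$-majorant for the left-hand side, the vanishing of the $\nabla\chi_R$ term, and the Young/Poincar\'e bookkeeping with $\eta=(1-c_3)/2$ exploiting $c_3<\tfrac12$ — is sound.
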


\begin{proof}
Let first $u \in C^1_c(\mathbb{R}^d)$. Since $V$ is bounded from below and $C_c^\infty(\mathbb{R}^d)$ is dense in $H^{1,2}(\mathbb{R}^d,\mathrm{d}x)$, it follows that the Poincar\'e inequality in (A2) even holds true for all elements from $H^{1,2}(\mathbb{R}^d,\mathrm{d}x)$, hence is satisfied for $u$. Following identically the computation in the proof of Lemma 6 in \cite{DMS10}, Condition (A2) (applied to $u$) together with (A3) and integration by parts imply the inequality
\begin{align} \label{Inequality_1_Lm1}
\| \nabla u \|^2 &\geq   \left( \frac{1- 2 c_3}{4} \right) \|u \, \nabla V \|^2 - \frac{c_2}{2} \frac{1}{\Lambda_{1}} \| \nabla u \|^2 - \frac{c_2}{2} \left(u,1\right)_{L^2(e^{-V}\mathrm{d}x)}^2.
\end{align}
Now let $u$ be of the form $u=f - \left(f,1\right)_{L^2(e^{-V}\mathrm{d}x)}$ for some $f \in C^1_c(\mathbb{R}^d)$. Let $(\varphi_n)_{ n \in \mathbb{N}}$ be a sequence of cutoff functions from $C_c^\infty(\mathbb{R}^d)$ as in the proof of Lemma \ref{Lm_verification_D5_spherical_velocity_Langevin}. Recall that $0 \leq \varphi_n \leq 1$, $n \in \mathbb{N}$, and there exists $C< \infty$ such that
\begin{align*}
\sup_{x \in \mathbb{R}^d} \left| \nabla \varphi_n(x) \right| \leq \frac{1}{n} \, C ~\mbox{ and }~\varphi_n \rightarrow 1 \mbox{ pointwise as } n \rightarrow \infty.
\end{align*}
Define $u_n= f - \varphi_n \left(f,1\right)_{L^2(e^{-V}\mathrm{d}x)}$ for all $n \in \mathbb{N}$. By Lebesgue's dominated convergence we get 
\begin{align*}
\lim_{n \rightarrow \infty} u_n =u,~\lim_{n \rightarrow \infty} \nabla u_n = \nabla u \,\mbox{ and } \,\lim_{n \rightarrow \infty} u_n \nabla V =u \nabla V
\end{align*}
with convergence in ${L^2(e^{-V}\mathrm{d}x)}$ in each case.  And since Inequality \eqref{Inequality_1_Lm1} holds for all $u_n$, $n \in \mathbb{N}$, the claim follows. 
\end{proof}

\begin{Lm} \label{Lm_Appendix_2}
Assume that (A1)-(A5) holds. Then $W \,\nabla V \in L^2(e^{-V}\mathrm{d}x)$ and there exists $\Lambda_{3} \in (0,\infty)$ such that for all $u \in \mathcal{X}$ we have
\begin{align*}
\| W \, \nabla u \|^2 \geq \Lambda_{3} \, \| W u \, \nabla V \|^2.
\end{align*}
\end{Lm}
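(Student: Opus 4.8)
The plan is to follow \cite[Sec.~2]{DMS10} and to proceed exactly as in the proof of Lemma \ref{Lm_Appendix_1}: establish the inequality first for $u \in C_c^\infty(\mathbb{R}^d)$ and then pass to $u \in \mathcal{X}$ by a cut-off approximation. Since $V \in C^2(\mathbb{R}^d)$ we have $W = \sqrt{1 + |\nabla V|^2} \in C^1(\mathbb{R}^d)$ with $\nabla W = (\nabla^2 V)\nabla V / W$, so for $u \in C_c^\infty(\mathbb{R}^d)$ the field $W^2 u^2\, \nabla V$ is $C^1$ with compact support. Using $|\nabla V|^2 e^{-V} = - \nabla V \cdot \nabla(e^{-V})$ and integrating by parts gives the identity
\begin{align*}
\|u W \nabla V\|^2 = \int_{\mathbb{R}^d} \Delta V\, W^2 u^2\, e^{-V}\mathrm{d}x + 2 \int_{\mathbb{R}^d} W u^2\, (\nabla V \cdot \nabla W)\, e^{-V}\mathrm{d}x + 2 \int_{\mathbb{R}^d} W^2 u\, (\nabla V \cdot \nabla u)\, e^{-V}\mathrm{d}x .
\end{align*}

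Next I would estimate the three terms on the right, keeping everything at the level of $W^2$-weighted $L^2$-quantities via $|\nabla V|^2 \le W^2$ and $W \ge 1$, so that $\int_{\mathbb{R}^d} W^2 u^2\, e^{-V}\mathrm{d}x = \|u\|^2 + \|u\nabla V\|^2$. By (A3), $\Delta V \le c_2 + c_3 |\nabla V|^2$, so the first term is $\le c_2(\|u\|^2 + \|u\nabla V\|^2) + c_3 \|u W \nabla V\|^2$. By (A5) together with $|\nabla V| \le W$ and $W \ge 1$ one has $|\nabla W| \le 2 c_4 W$, so Young's inequality with a small parameter $\delta > 0$ bounds the second term by $2 c_4 \delta\, \|u W \nabla V\|^2 + (2 c_4/\delta)(\|u\|^2 + \|u\nabla V\|^2)$, and Cauchy--Schwarz with a small $\eta > 0$ bounds the third by $\eta\, \|u W \nabla V\|^2 + (1/\eta)\, \|W \nabla u\|^2$. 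Since $c_3 < \tfrac12$, one may pick $\delta,\eta$ so small that $c_3 + 2 c_4 \delta + \eta < 1$ and absorb the $\|u W \nabla V\|^2$-contributions into the left-hand side (legitimate because $\|u W \nabla V\|^2 < \infty$ for compactly supported smooth $u$), obtaining constants $C_1, C_2 < \infty$ with
\begin{align*}
\|u W \nabla V\|^2 \le C_1\big(\|u\|^2 + \|u\nabla V\|^2\big) + C_2\, \|W \nabla u\|^2 , \qquad u \in C_c^\infty(\mathbb{R}^d).
\end{align*}

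With this in hand I would first take $u = \varphi_n$, the cut-off functions from the proof of Lemma \ref{Lm_verification_D5_spherical_velocity_Langevin}: then $\|\varphi_n\|^2 \le 1$, $\|\varphi_n \nabla V\|^2 \le \|\nabla V\|^2 < \infty$ by (A4), and $\|W \nabla \varphi_n\|^2 \le (C^2/n^2)(1 + \|\nabla V\|^2) \to 0$, so $\|\varphi_n W \nabla V\|^2$ is bounded uniformly in $n$ and Fatou's lemma gives $W \nabla V \in L^2(e^{-V}\mathrm{d}x)$. For $u \in \mathcal{X}$, write $u = f - (f,1)_{L^2(e^{-V}\mathrm{d}x)}$ with $f \in C_c^\infty(\mathbb{R}^d)$ and put $u_n = f - \varphi_n (f,1)_{L^2(e^{-V}\mathrm{d}x)} \in C_c^\infty(\mathbb{R}^d)$; since $W \nabla V \in L^2(e^{-V}\mathrm{d}x)$ and $\|W \nabla \varphi_n\| \to 0$, dominated convergence yields $u_n \to u$, $W \nabla u_n \to W \nabla u$, $u_n \nabla V \to u \nabla V$ and $u_n W \nabla V \to u W \nabla V$ in $L^2(e^{-V}\mathrm{d}x)$, just as in the proof of Lemma \ref{Lm_Appendix_1}, so the displayed bound passes to every $u \in \mathcal{X}$.

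It then remains to absorb the lower-order terms. Since $u = f - (f,1)_{L^2(e^{-V}\mathrm{d}x)}$ with $f \in C_c^\infty(\mathbb{R}^d)$, the Poincar\'e inequality (A2) applied to $f$ gives $\|u\|^2 \le \Lambda_1^{-1} \|\nabla u\|^2$, and Lemma \ref{Lm_Appendix_1} gives $\|u \nabla V\|^2 \le \Lambda_2^{-1} \|\nabla u\|^2$; using $W \ge 1$ to replace $\|\nabla u\|$ by $\|W \nabla u\|$ and inserting into the displayed bound yields $\|W \nabla u\|^2 \ge \Lambda_3\, \|W u\, \nabla V\|^2$ with $\Lambda_3 = \big(C_1(\Lambda_1^{-1} + \Lambda_2^{-1}) + C_2\big)^{-1}$. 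I expect the main obstacle to be the absorption step: it forces one first to know $\|u W \nabla V\|^2 < \infty$ (clear for compactly supported $u$, but for $u \in \mathcal{X}$ available only after the approximation) and to be disciplined about never letting powers of $W$ higher than two appear in the estimates — the condition $c_3 < \tfrac12$ in (A3) being precisely what makes the absorption go through.
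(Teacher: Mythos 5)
Your proof is correct, but it takes a genuinely different route from the paper's. The paper obtains the key intermediate inequality by a bootstrap: it applies Lemma~\ref{Lm_Appendix_1} to the weighted function $Wu - \left(Wu,1\right)_{L^2(e^{-V}\mathrm{d}x)}$, which gives $\|\nabla(Wu)\|^2 \geq \Lambda_2 \|(Wu - (Wu,1))\nabla V\|^2$, and then expands $\nabla(Wu)$ by the product rule (controlling $\nabla W$ via (A5)) and controls the scalar term $(Wu,1)^2$ via (A2), arriving at \eqref{eq_1_Lm_Appendix_2}. In that route (A3) never appears explicitly --- it enters only through the constant $\Lambda_2$ from Lemma~\ref{Lm_Appendix_1}. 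You instead bypass the bootstrap entirely and redo the integration by parts of Lemma~\ref{Lm_Appendix_1} at the level of the $W^2$-weighted quantity $\|uW\nabla V\|^2$, which forces you to invoke (A3) directly (for the $\Delta V$ term) together with (A5), Young, and Cauchy--Schwarz, and the hypothesis $c_3 < \tfrac12$ is what makes the absorption close; Lemma~\ref{Lm_Appendix_1} is then used only once at the end, applied to $u$ itself, to absorb $\|u\nabla V\|^2$. Both approaches recycle the same cut-off machinery $u_n = f - \varphi_n\left(f,1\right)_{L^2(e^{-V}\mathrm{d}x)}$ to pass to $\mathcal{X}$. A smaller but pleasant divergence is that for the integrability $W\nabla V \in L^2(e^{-V}\mathrm{d}x)$ you use Fatou with the bounded-in-$n$ estimate for $\|\varphi_n W\nabla V\|$, whereas the paper passes to the subsequence $\psi_n = \varphi_{2^n}$ precisely so that $\psi_n$ becomes pointwise monotone increasing and monotone convergence applies --- your Fatou argument avoids that small contortion. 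Your version is more self-contained and explicit; the paper's is more modular since it reuses Lemma~\ref{Lm_Appendix_1} as a black box on the shifted function $Wu$.
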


\begin{proof}
Let first $u \in C^\infty_c(\mathbb{R}^d)$. Then $Wu \in C^1_c(\mathbb{R}^d)$. First, Lemma \ref{Lm_Appendix_1} applied to $Wu- \left(Wu,1 \right)_{L^2(e^{-V}\mathrm{d}x)}$ gives
\begin{align*}
\| \nabla \left( Wu \right) \|^2 \geq \Lambda_{2} \, \left\| \left(Wu - \left(Wu,1 \right)_{L^2(e^{-V}\mathrm{d}x)} \right) \nabla V \, \right\|^2.
\end{align*}
By performing the computations as in the proof of \cite[Lem.~7]{DMS10} and using Conditions (A2) (applied to $u$) and (A5) one shows that
\begin{align} \label{eq_1_Lm_Appendix_2}
\frac{\Lambda_{2}}{2} \, \| W u\, \nabla V \|^2 \leq & \left( 2 + \frac {2 \Lambda_{2}}{\Lambda_{1}} \|W\|^2 \, \| \nabla V \|^2 \right) \| W \, \nabla u \|^2 + 4 \, c_4^2 \, \left\|  W \, u \right\|^2 \nonumber \\&+ {2 \Lambda_{2}} \, \|W\|^2 \, \| \nabla V \|^2 \, \left(u,1\right)^2_{L^2(e^{-V}\mathrm{d}x)}.
\end{align} 
Now let $(\varphi_n)_{n \in \mathbb{N}}$ and $C < \infty$ be as in the proof of Lemma \ref{Lm_Appendix_1}. Define $\psi_n=\varphi_{2^n}$ for all $n \in \mathbb{N}$. Then note that $\psi_n \uparrow 1$ pointwise as $n \rightarrow \infty$. Clearly, we have
\begin{align} \label{eq_2_Lm_Appendix_2}
\lim_{n \rightarrow \infty} W \psi_n =W ,~ \lim_{n \rightarrow \infty}W \, \nabla \psi_n  = 0 \,\mbox{ in }L^2(e^{-V}\mathrm{d}x)
\end{align}
since $W \in L^2(e^{-V}\mathrm{d}x)$, $ \psi_n \uparrow 1$ as $n \rightarrow \infty$ and $\left|\nabla \psi_n \right| \leq \frac{1}{2^n} \,C$ for all $n \in \mathbb{N}$. Inserting $\psi_n$, $n \in \mathbb{N}$, instead of $u$ in \eqref{eq_1_Lm_Appendix_2} and using monotone convergence on the left hand side we obtain $\int_{\mathbb{R}^d} W^2 \left| \nabla V \right|^2 \, \mathrm{d} e^{-V}\mathrm{d}x < \infty$. Now let $u$ be of the form $u=f - \left(f,1\right)_{L^2(e^{-V}\mathrm{d}x)}$ for some $f \in C^\infty_c(\mathbb{R}^d)$ and choose an approximating sequence $(u_n)_{n \in \mathbb{N}}$ for $u$ as in the proof of Lemma \ref{Lm_Appendix_1}. Hence, using also that $W \left| \nabla V \right|  \in L^2(e^{-V}\mathrm{d}x)$, we get
\begin{align*}
\lim_{n \rightarrow \infty} W \, u_n = W \,u ,~ \lim_{n \rightarrow \infty} W \, \nabla u_n  = W \nabla u,~\lim_{n \rightarrow \infty} W u_n\, \nabla V =W u \, \nabla V
\end{align*}
with convergence in $L^2(e^{-V}\mathrm{d}x)$. Consequently, \eqref{eq_1_Lm_Appendix_2} reduces for all those $u$ to
\begin{align*} 
\frac{\Lambda_{2}}{2} \, \| W u\, \nabla V \|^2 \leq & \left( 2 + \frac {2 \Lambda_{2}}{\Lambda_{1}} \|W\|^2 \, \| \nabla V \|^2 \right) \| W \, \nabla u \|^2 + 4 \, c_4^2 \, \left\| W \, u \right\|^2. \nonumber 
\end{align*} 
Finally, by (A2) and Lemma \ref{Lm_Appendix_1} the claim follows since
\begin{align*}
\left\| W \, u \right\|^2 = \left\| u \right\|^2 +  \left\| u \nabla V \right\|^2 \leq \left(\frac{1}{\Lambda_{1}} + \frac{1}{\Lambda_{2}} \right) \left\|  \nabla u \right\|^2 \leq \frac{\Lambda_{2} + \Lambda_{1}}{\Lambda_{2} \Lambda_{1}} \left\|  W \, \nabla u \right\|^2.
\end{align*}
\end{proof}

The upcoming lemma can be proven by copying the proof of Lemma 8 of \cite{DMS10} in case a solution $u \in \mathcal{X}$ to \eqref{eq_Elliptic_Equation} is considered. It makes mainly use of Lemma \ref{Lm_Appendix_1}, Lemma \ref{Lm_Appendix_2}, (A2) and (A5). 

\begin{Lm} \label{Lm_Appendix_3} 
Assume (A1)-(A5). Let $u \in \mathcal{X}$ and choose $g$ as in \eqref{eq_Elliptic_Equation} accordingly. Then there exists some constant $c_5 < \infty$, independent of $u$ and $g$, such that 
\begin{align*}
\left\| W \, u \right\|^2 + \| W\, \nabla u \|^2 \leq c_5 \,\|g\|^2.
\end{align*}
\end{Lm}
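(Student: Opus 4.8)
The statement is an a priori $H^1(e^{-V}\mathrm{d}x)$-with-weight $W$ estimate for the solution $u\in\mathcal X$ of the elliptic equation \eqref{eq_Elliptic_Equation}, and the plan is to follow the proof of Lemma 8 in \cite{DMS10} with the cut-off regularization already used in Lemma \ref{Lm_Appendix_1} and Lemma \ref{Lm_Appendix_2} so that every integration by parts is legitimate for $u\in\mathcal X$. First I would record the basic energy identity: testing \eqref{eq_Elliptic_Equation} against $u$ in $L^2(e^{-V}\mathrm{d}x)$ and integrating by parts gives $\|u\|^2 + c_1\|\nabla u\|^2 = \left(g,u\right)_{L^2(e^{-V}\mathrm{d}x)}\leq\|g\|\,\|u\|$, hence (using $\left(u,1\right)_{L^2(e^{-V}\mathrm{d}x)}=0$ and the Poincar\'e inequality (A2)) one already gets $\|u\|^2 + \|\nabla u\|^2 \leq C\|g\|^2$ with a constant depending only on $c_1$ and $\Lambda_1$. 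By Lemma \ref{Lm_Appendix_1} and Lemma \ref{Lm_Appendix_2}, $\|W u\,\nabla V\|$ and $\|Wu\|$ are then controlled once $\|W\,\nabla u\|$ is, so the whole problem reduces to bounding $\|W\,\nabla u\|^2$ by $\|g\|^2$.

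\textbf{The weighted estimate.} For the weighted gradient I would test \eqref{eq_Elliptic_Equation} against $-(\Delta u - \nabla V\cdot\nabla u)$ — equivalently, apply the symmetric operator $G_1:=\Delta-\nabla V\cdot\nabla$ (which is self-adjoint and nonpositive on $L^2(e^{-V}\mathrm{d}x)$) and pair with $u$ — yielding $\|\nabla u\|^2 + c_1\|G_1 u\|^2 = -\left(g,G_1 u\right)_{L^2(e^{-V}\mathrm{d}x)}$, and hence $\|G_1 u\|\leq \tfrac1{c_1}\|g\|$. Then, following \cite[Lem.~8]{DMS10}, one uses the Bochner/Bakry--\'Emery type identity
\begin{align*}
\|G_1 u\|^2 = \int_{\mathbb{R}^d}\big(|\nabla^2 u|^2 + \nabla V\cdot\nabla^2 V\cdot\text{(lower order)}\big)\,e^{-V}\mathrm{d}x,
\end{align*}
more precisely the integrated identity relating $\|G_1u\|^2$, $\|\nabla^2 u\|^2$ and a term involving $(\nabla^2 V\,\nabla u,\nabla u)$, controlling the latter with (A3) to get $\|\nabla^2 u\|^2 + \big(\text{a coercive piece}\big)\lesssim \|G_1u\|^2$. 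Inserting the weight $W=\sqrt{1+|\nabla V|^2}$, testing against $W^2(\ldots)$ appropriately and absorbing the commutator $[G_1,W^2]$-type terms via (A5) and Lemma \ref{Lm_Appendix_2}, one arrives at $\|W\,\nabla u\|^2 \lesssim \|G_1 u\|^2 + \|\nabla u\|^2 + \|u\|^2 \lesssim \|g\|^2$. Combining with Lemma \ref{Lm_Appendix_1}, Lemma \ref{Lm_Appendix_2} and the identity $\|Wu\|^2=\|u\|^2+\|u\,\nabla V\|^2$ gives the asserted bound $\|Wu\|^2 + \|W\,\nabla u\|^2 \leq c_5\|g\|^2$ with $c_5<\infty$ depending only on the constants in (A1)--(A5).

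\textbf{Making it rigorous for $u\in\mathcal X$.} The one point of genuine care — and the part where the present version differs from \cite{DMS10}, which ignores domain issues — is that all of the above integrations by parts must be justified for $u=f-\left(f,1\right)_{L^2(e^{-V}\mathrm{d}x)}$ with $f\in C_c^\infty(\mathbb{R}^d)$, i.e.\ for a function that is smooth and compactly supported \emph{up to the constant}. Here I would argue exactly as in Lemma \ref{Lm_Appendix_1} and Lemma \ref{Lm_Appendix_2}: introduce the cut-offs $\psi_n=\varphi_{2^n}$ with $\psi_n\uparrow 1$ and $|\nabla\psi_n|\leq 2^{-n}C$, prove each identity/inequality first for $f\in C_c^\infty$ and then pass to the limit using the already-established integrability $W\,\nabla V\in L^2(e^{-V}\mathrm{d}x)$ and $|\nabla V|\in L^2(e^{-V}\mathrm{d}x)$ (A4), together with $W\in L^2(e^{-V}\mathrm{d}x)$. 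Since $f\in C_c^\infty$ is genuinely compactly supported, the derivatives $\nabla f$, $\nabla^2 f$ appearing in the Bochner identity are themselves compactly supported, so no boundary terms at infinity arise from the $f$-part; only the constant part needs the $\psi_n$-truncation, and that produces only terms tending to zero by dominated/monotone convergence. The main obstacle is thus not conceptual but bookkeeping: one must verify that the weight $W$ never destroys integrability, which is precisely what (A4), (A5) and Lemma \ref{Lm_Appendix_2} were set up to guarantee. Given those, the estimate follows by copying the computation of \cite[Lem.~8]{DMS10} line by line.
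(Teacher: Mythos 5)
Your energy identity (test against $u$ to get $\|u\|^2+c_1\|\nabla u\|^2\le\|g\|\,\|u\|$) and your cut-off scheme for justifying the integrations by parts when $u\in\mathcal X$ are both fine, and match the regularization the paper uses in Lemmas \ref{Lm_Appendix_1}--\ref{Lm_Appendix_2}. But the core of your weighted argument is misplaced: the Bochner/Bakry--\'Emery identity and the resulting bound on $\|\nabla^2 u\|$ are not part of Lemma 8 of \cite{DMS10} (i.e.\ of Lemma \ref{Lm_Appendix_3}); they are the content of Proposition 5 of \cite{DMS10}, reproduced here as Proposition \ref{Elliptic_regularity_Prop}. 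As the paper itself notes, that proposition \emph{requires} Lemma \ref{Lm_Appendix_3} as an input: in the Bochner computation the term $\int(\nabla^2V\,\nabla u,\nabla u)\,e^{-V}\mathrm{d}x$ is precisely the one you control via (A5) by $\|W\nabla u\|^2$, which is what Lemma \ref{Lm_Appendix_3} provides. So the route you describe --- derive $\|\nabla^2u\|\lesssim\|g\|$ via Bochner first and then deduce the weighted $H^1$ bound --- is circular relative to the logical order of the paper and of \cite{DMS10}, and the intermediate claim $\|W\nabla u\|^2\lesssim\|G_1u\|^2+\|\nabla u\|^2+\|u\|^2$ does not follow from what you have written: the piece $\||\nabla V|\,\nabla u\|^2$ of $\|W\nabla u\|^2$ is not controlled by the right-hand side without exactly the kind of weighted estimate you are trying to prove.

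The proof that the paper has in mind (Lemma 8 of \cite{DMS10}, modulo the cut-offs you correctly supplied) is purely $H^1$-level: test \eqref{eq_Elliptic_Equation} against $W^2u$ and integrate by parts to get
$\|Wu\|^2 + c_1\|W\nabla u\|^2 = (g,W^2u) - c_1\int u\,\nabla(W^2)\cdot\nabla u\,e^{-V}\mathrm{d}x$;
then use (A5) in the form $|\nabla(W^2)|=2W|\nabla W|\le 2\sqrt{2}c_4\,W^2$ to bound the commutator term by a constant times $\|Wu\|\,\|W\nabla u\|$, use the algebraic identity $\|W^2u\|^2=\|Wu\|^2+\|Wu\,\nabla V\|^2$ together with Lemma \ref{Lm_Appendix_2} to bound $\|W^2u\|$ by $\|Wu\|$ and $\|W\nabla u\|$, and close with Young's inequality and (A2)/Lemma \ref{Lm_Appendix_1} (to control $\|Wu\|^2=\|u\|^2+\|u\nabla V\|^2\lesssim\|g\|^2$ from the unweighted energy estimate). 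No $\nabla^2u$ appears anywhere. You do gesture at ``testing against $W^2(\ldots)$'' at the end, so you have the right ingredient; but the Bochner step should be removed from this lemma entirely and deferred to Proposition \ref{Elliptic_regularity_Prop}.
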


Finally, the upcoming Proposition gives the desired regularity estimate. For the proof, see the proof of \cite[Prop.~5]{DMS10}. The latter requires especially the use of Lemma \ref{Lm_Appendix_3}. 

\begin{Pp} \label{Elliptic_regularity_Prop}
Assume (A1)-(A5). Let $u \in \mathcal{X}$ and choose $g$ as in \eqref{eq_Elliptic_Equation} accordingly. Then there exists some constant $c_6 < \infty$, independent of $u$ and $g$, such that 
\begin{align*}
\| \nabla^2 u \| \leq c_6 \, \| g\|.
\end{align*}
\end{Pp}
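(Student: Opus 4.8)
The plan is to reduce the whole estimate to one application of the weighted Bochner identity for the operator $\mathcal{L}:=\Delta-\nabla V\cdot\nabla$, after which the only genuinely delicate point is the term involving the Hessian of $V$. First note that every $u\in\mathcal{X}$ has the form $u=f-\left(f,1\right)_{L^2(e^{-V}\mathrm{d}x)}$ with $f\in C_c^\infty(\mathbb{R}^d)$, so $u\in C^\infty(\mathbb{R}^d)$ and all derivatives of $u$ have compact support; since moreover $V\in C^2(\mathbb{R}^d)$ (hence $e^{-V}\in C^2$, $\nabla V\in C^1$, $\nabla^2 V\in C^0$), every integration by parts occurring below is elementary, each integrand carrying a compactly supported factor among $\nabla u,\nabla^2 u,\nabla^3 u$. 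Testing \eqref{eq_Elliptic_Equation} against $u$ in $L^2(e^{-V}\mathrm{d}x)$ and using $(\mathcal{L}u,u)_{L^2(e^{-V}\mathrm{d}x)}=-\|\nabla u\|^2$ gives $\|u\|^2+c_1\|\nabla u\|^2=(g,u)_{L^2(e^{-V}\mathrm{d}x)}$, hence $\|u\|\le\|g\|$ and, from \eqref{eq_Elliptic_Equation} again, $\|\mathcal{L}u\|\le c_1^{-1}(\|u\|+\|g\|)\le 2c_1^{-1}\|g\|$.

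Next I would integrate the pointwise identity
\begin{align*}
\tfrac12\,\mathcal{L}\big(|\nabla u|^2\big)=|\nabla^2 u|^2+\nabla u\cdot\nabla(\mathcal{L}u)+\nabla^2 V(\nabla u,\nabla u)
\end{align*}
against $e^{-V}\mathrm{d}x$. Since $|\nabla u|^2$ has compact support, $\int_{\mathbb{R}^d}\mathcal{L}\big(|\nabla u|^2\big)\,e^{-V}\mathrm{d}x=0$, and integrating $\nabla u\cdot\nabla(\mathcal{L}u)$ by parts against the weight gives $-\|\mathcal{L}u\|^2$, so that
\begin{align*}
\|\nabla^2 u\|^2=\|\mathcal{L}u\|^2-\int_{\mathbb{R}^d}\nabla^2 V(\nabla u,\nabla u)\,e^{-V}\mathrm{d}x .
\end{align*}
The first term is already controlled by the previous paragraph; everything now rests on the last integral.

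The main obstacle is that $\nabla^2 V$ need not be bounded, so this integral cannot be estimated directly; this is precisely where Lemma \ref{Lm_Appendix_3} is used. The remedy is one further integration by parts, moving a derivative off $\nabla^2 V$ and using $\Delta u=\mathcal{L}u+\nabla V\cdot\nabla u$ to cancel the resulting $(\nabla V\cdot\nabla u)^2$ terms, which leads to
\begin{align*}
\int_{\mathbb{R}^d}\nabla^2 V(\nabla u,\nabla u)\,e^{-V}\mathrm{d}x=-\int_{\mathbb{R}^d}\nabla V\cdot(\nabla^2 u\,\nabla u)\,e^{-V}\mathrm{d}x-\int_{\mathbb{R}^d}(\nabla V\cdot\nabla u)(\mathcal{L}u)\,e^{-V}\mathrm{d}x .
\end{align*}
By Cauchy--Schwarz the first integral is at most $\|\nabla^2 u\|\,\big\|\,|\nabla V|\,|\nabla u|\,\big\|$ and the second at most $\big\|\,|\nabla V|\,|\nabla u|\,\big\|\,\|\mathcal{L}u\|$, and since $|\nabla V|\le W$ Lemma \ref{Lm_Appendix_3} yields $\big\|\,|\nabla V|\,|\nabla u|\,\big\|\le\|W\nabla u\|\le\sqrt{c_5}\,\|g\|$. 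Inserting all this into the Bochner identity gives a quadratic inequality of the form
\begin{align*}
\|\nabla^2 u\|^2\le\sqrt{c_5}\,\|g\|\,\|\nabla^2 u\|+\Big(\tfrac{4}{c_1^{2}}+\tfrac{2\sqrt{c_5}}{c_1}\Big)\|g\|^2 ,
\end{align*}
which I would close using that $X^2\le aX+b$ with $a,b\ge 0$ forces $X\le a+\sqrt{b}$; this gives the claim with $c_6$ explicit in terms of $c_1$ and $c_5$. The only care needed throughout lies in justifying the two integrations by parts, and this, as remarked, is immediate from the compact support of all derivatives of $u\in\mathcal{X}$.
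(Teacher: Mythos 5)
Your proof is correct and follows essentially the same route as the proof of Proposition~5 in \cite{DMS10}, to which the paper defers: one first tests \eqref{eq_Elliptic_Equation} against $u$ to control $\|u\|$, $\|\nabla u\|$ and $\|\Delta u-\nabla V\cdot\nabla u\|$ by $\|g\|$, then derives an integration-by-parts identity expressing $\|\nabla^2 u\|^2$ in terms of $\|\Delta u-\nabla V\cdot\nabla u\|^2$ plus terms involving only $\nabla V$ (your Bochner identity followed by moving a derivative off $\nabla^2 V$ yields exactly the identity that \cite{DMS10} obtains by expanding $\|\Delta u-\nabla V\cdot\nabla u\|^2$ directly), and finally closes with Lemma~\ref{Lm_Appendix_3}.
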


Finally, here are  some sufficient conditions implying (A1)-(A5), used already e.g.~in \cite{DKMS11}.

\begin{Lm}
Let the potential $V$ satisfy Assumptions (A1) and (A2). Furthermore, assume that there exists a constant $c < \infty$ such that
\begin{align} \label{Villanis_condition}
\left| \nabla^2 V (x) \right| \leq c \left( 1+ \left| \nabla V(x) \right|\right) \mbox{ for all } x \in \mathbb{R}^d.
\end{align}
Then $V$ fulfills (A1)-(A5). We remark that \eqref{Villanis_condition} is some potential growing condition introduced by Villani, see \cite{Vil09}.
\end{Lm}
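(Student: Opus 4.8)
The plan is to note that (A1) and (A2) are themselves part of the hypotheses, so only (A3), (A4) and (A5) have to be derived, and that the pointwise Hessian bound \eqref{Villanis_condition} does almost all of the work.

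For (A3): since $|\Delta V|^2 = (\sum_i \partial_i^2 V)^2 \le d \sum_i (\partial_i^2 V)^2 \le d\,|\nabla^2 V|^2$ by Cauchy--Schwarz, \eqref{Villanis_condition} gives $\Delta V \le \sqrt d\, c\,(1+|\nabla V|)$. Young's inequality $\sqrt d\, c\,|\nabla V| \le \tfrac{d c^2}{4\eta} + \eta\,|\nabla V|^2$ with $\eta = \tfrac14$ then yields $\Delta V \le c_2 + c_3\,|\nabla V|^2$ with $c_3 = \tfrac14 \in [0,\tfrac12)$ and $c_2 = \sqrt d\, c + d c^2 \in (0,\infty)$ (w.l.o.g.\ $c>0$), which is exactly (A3). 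For (A5): a direct computation gives $\nabla W = W^{-1}(\nabla^2 V)\,\nabla V$, hence $|\nabla W| \le W^{-1}|\nabla^2 V|\,|\nabla V| \le |\nabla^2 V|$ since $|\nabla V| \le W$; combined with \eqref{Villanis_condition} this is (A5) with $c_4 = c$.

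The one place that requires care is (A4), i.e.\ $|\nabla V|\in L^2(e^{-V}\mathrm dx)$, and this is the main obstacle. Here I would take the cutoff functions $(\varphi_n)_{n\in\mathbb N}$ from the proof of Lemma \ref{Lm_verification_D5_spherical_velocity_Langevin} (so $0\le\varphi_n\le 1$, $\varphi_n\uparrow 1$ pointwise, $\sup_x|\nabla\varphi_n(x)|\le C/n$) and integrate by parts: since $\varphi_n^2\,\nabla V$ is $C^1$ with compact support (using $V\in C^2$ from (A1)) and $\nabla(e^{-V}) = -e^{-V}\nabla V$,
\begin{align*}
\int_{\mathbb R^d}\varphi_n^2\,|\nabla V|^2\,e^{-V}\mathrm dx = \int_{\mathbb R^d}\varphi_n^2\,(\Delta V)\,e^{-V}\mathrm dx + 2\int_{\mathbb R^d}\varphi_n\,\nabla\varphi_n\cdot\nabla V\;e^{-V}\mathrm dx.
\end{align*}
Bounding the first term on the right via (A3) and the normalization $\int e^{-V}\mathrm dx = 1$, and the second term by Young's inequality $2\varphi_n|\nabla\varphi_n||\nabla V| \le \eta\,\varphi_n^2|\nabla V|^2 + \eta^{-1}|\nabla\varphi_n|^2$ (the last term being $O(n^{-2})$), one obtains, for $\eta>0$ small enough that $c_3+\eta<1$,
\begin{align*}
(1-c_3-\eta)\int_{\mathbb R^d}\varphi_n^2\,|\nabla V|^2\,e^{-V}\mathrm dx \le c_2 + \frac{C^2}{\eta\,n^2}.
\end{align*}
Letting $n\to\infty$ with monotone convergence on the left then gives $\||\nabla V|\|_{L^2(e^{-V}\mathrm dx)}^2 \le c_2/(1-c_3-\eta) < \infty$, which is (A4). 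Thus the only genuinely delicate point is justifying the integration by parts, for which the cutoff sequence is introduced; the rest is elementary.
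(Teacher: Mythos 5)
Your argument is correct and matches the approach the paper defers to: Young's inequality applied to the Hessian bound \eqref{Villanis_condition} for (A3), a Villani-style integration-by-parts and absorption argument for (A4), and the direct computation $\nabla W = W^{-1}(\nabla^2 V)\nabla V$ for (A5); the paper's own proof simply cites \cite{DKMS11}, \cite[Lem.~A18]{Vil09} and notes that (A5) ``can easily be proven.'' One small technicality: since the cutoff functions $\varphi_n(x)=\varphi(x/n)$ from Lemma \ref{Lm_verification_D5_spherical_velocity_Langevin} need not be pointwise monotone in $n$, replace the appeal to monotone convergence in your (A4) limit by Fatou's lemma, which yields $\|\,|\nabla V|\,\|^2_{L^2(e^{-V}\mathrm{d}x)}\le c_2/(1-c_3-\eta)$ directly from the uniform bound on the right-hand side.
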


\begin{proof}
We refer to \cite{DKMS11}, \cite{DMS10} and \cite{Vil09}. Indeed, (A3) follows by \eqref{Villanis_condition} and the Young inequality, see \cite{DKMS11}. (A4) mainly follows from  \eqref{Villanis_condition}, see e.g.~\cite[Lem.~A18]{Vil09}.  Finally, (A5) can easily be proven using again \eqref{Villanis_condition}. 
\end{proof}

\section*{Acknowledgement} This work has been supported by Bundesministerium f"{u}r Bildung und Forschung, Schwerpunkt \glqq Mathematik f"{u}r Innovationen in Industrie and Dienstleistungen\grqq , Verbundprojekt  ProFil, $03$MS$606$. Furthermore, the second named author is grateful to Benedict Baur, Martin Kolb and Axel Klar for discussions and helpful comments. Finally, the authors thank Dietmar Hietel and Raimund Wegener for their virtual nonwoven webs provided in Section \ref{Hypocoercivity_section_Spherical_velocity_Langevin}.


\begin{thebibliography}{GKMW07}

\bibitem[Alt06]{Alt06}
H.~W. Alt.
\newblock {\em Lineare Funktionalanalysis}.
\newblock Springer Berlin Heidelberg, 2006.

\bibitem[AMTU01]{AMTU01}
A.~Arnold, P.~Markowich, G.~Toscani, and A.~Unterreiter.
\newblock On convex {S}obolev inequalities and the rate of convergence to
  equilibrium for {F}okker-{P}lanck type equations.
\newblock {\em Comm. Partial Differential Equations}, 26(1-2):43--100, 2001.

\bibitem[Bai10]{Bai10}
I.~Bailleul.
\newblock A stochastic approach to relativistic diffusions.
\newblock {\em Ann. Inst. Henri Poincar\'e Probab. Stat.}, 46(3):760--795,
  2010.

\bibitem[BBCG08]{BBCG08}
D.~Bakry, F.~Barthe, P.~Cattiaux, and A.~Guillin.
\newblock A simple proof of the {P}oincar\'e inequality for a large class of
  probability measures including the log-concave case.
\newblock {\em Electron. Commun. Probab.}, 13:60--66, 2008.

\bibitem[Bec89]{Be89}
W.~Beckner.
\newblock A generalized {P}oincar\'e inequality for {G}aussian measures.
\newblock {\em Proc. Amer. Math. Soc.}, 105(2):397--400, 1989.

\bibitem[BKR97]{BKR97}
V.~I. Bogachev, N.~V. Krylov, and M.~R{\"o}ckner.
\newblock Elliptic regularity and essential self-adjointness of {D}irichlet
  operators on {$\bold R^{n}$}.
\newblock {\em Ann. Scuola Norm. Sup. Pisa Cl. Sci. (4)}, 24(3):451--461, 1997.

\bibitem[Cal12]{Cal12}
S.~Calogero.
\newblock Exponential convergence to equilibrium for kinetic {F}okker-{P}lanck
  equations.
\newblock {\em To appear in Comm. Part. Diff. Equat.}, 2012.

\bibitem[CG08]{CG08}
F.~Conrad and M.~Grothaus.
\newblock Construction of {$N$}-particle {L}angevin dynamics for
  {$H^{1,\infty}$}-potentials via generalized {D}irichlet forms.
\newblock {\em Potential Anal.}, 28(3):261--282, 2008.

\bibitem[CG10]{CG10}
F.~Conrad and M.~Grothaus.
\newblock Construction, ergodicity and rate of convergence of {$N$}-particle
  {L}angevin dynamics with singular potentials.
\newblock {\em J. Evol. Equ.}, 10(3):623--662, 2010.

\bibitem[CKMT10]{CKMT10}
J.~A. Carrillo, A.~Klar, S.~Martin, and S.~Tiwari.
\newblock Self-propelled interacting particle systems with roosting force.
\newblock {\em MMMAS}, 20:1533--1552, 2010.

\bibitem[Dav80]{Dav80}
E.~B. Davies.
\newblock {\em One-parameter semigroups}, volume~15 of {\em London Mathematical
  Society Monographs}.
\newblock Academic Press Inc. [Harcourt Brace Jovanovich Publishers], London,
  1980.

\bibitem[DKMS12]{DKMS11}
J.~Dolbeault, A.~Klar, C.~Mouhot, and C.~Schmeiser.
\newblock Exponential rate of convergence to equilibrium for a model describing
  fiber lay-down processes.
\newblock arXiv preprint, math.AP, 1201.2156v1, 2012.

\bibitem[DMS09]{DMS09}
J.~Dolbeault, C.~Mouhot, and C.~Schmeiser.
\newblock Hypocoercivity for kinetic equations with linear relaxation terms.
\newblock {\em C. R. Math. Acad. Sci. Paris}, 347(9-10):511--516, 2009.

\bibitem[DMS10]{DMS10}
J.~Dolbeault, C.~Mouhot, and C.~Schmeiser.
\newblock Hypocoercivity for linear kinetic equations conserving mass.
\newblock arXiv preprint, math.AP, 1005.1495v1, 2010.

\bibitem[Dua11]{Dua11}
R.~Duan.
\newblock Hypocoercivity of linear degenerately dissipative kinetic equations.
\newblock {\em Nonlinearity}, 24(8):2165--2189, 2011.

\bibitem[DV01]{DEVI01}
L.~Desvillettes and C.~Villani.
\newblock On the trend to global equilibrium in spatially inhomogeneous
  entropy-dissipating systems: the linear {F}okker-{P}lanck equation.
\newblock {\em Comm. Pure Appl. Math.}, 54(1):1--42, 2001.

\bibitem[DV05]{DEVI05}
L.~Desvillettes and C.~Villani.
\newblock On the trend to global equilibrium for spatially inhomogeneous
  kinetic systems: the {B}oltzmann equation.
\newblock {\em Invent. Math.}, 159(2):245--316, 2005.

\bibitem[FLJ07]{FJ07}
J.~Franchi and Y.~Le~Jan.
\newblock Relativistic diffusions and {S}chwarzschild geometry.
\newblock {\em Comm. Pure Appl. Math.}, 60(2):187--251, 2007.

\bibitem[GK08]{GK08}
M.~Grothaus and A.~Klar.
\newblock Ergodicity and rate of convergence for a nonsectorial fiber lay-down
  process.
\newblock {\em SIAM J. Math. Anal.}, 40(3):968--983, 2008.

\bibitem[GKMS12]{GKMS12}
M.~Grothaus, A.~Klar, J.~Maringer, and P.~Stilgenbauer.
\newblock Geometry, mixing properties and hypocoercivity of a degenerate
  diffusion arising in technical textile industry.
\newblock arXiv preprint, math.PR,~1203.4502v1, 2012.

\bibitem[GKMW07]{GKMW07}
T.~G{\"o}tz, A.~Klar, N.~Marheineke, and R.~Wegener.
\newblock A stochastic model and associated {F}okker-{P}lanck equation for the
  fiber lay-down process in nonwoven production processes.
\newblock {\em SIAM J. Appl. Math.}, 67(6):1704--1717 (electronic), 2007.

\bibitem[Gol85]{Gol85}
J.~A. Goldstein.
\newblock {\em Semigroups of linear operators and applications}.
\newblock Oxford Mathematical Monographs. The Clarendon Press Oxford University
  Press, New York, 1985.

\bibitem[GS12]{GS12}
M.~Grothaus and P.~Stilgenbauer.
\newblock Geometric {L}angevin equations on submanifolds and applications to
  the stochastic melt-spinning process of nonwovens and biology.
\newblock arXiv preprint, math.PR,~1204.5695v1, 2012.

\bibitem[H{\'e}r06]{HeNi06}
F.~H{\'e}rau.
\newblock Hypocoercivity and exponential time decay for the linear
  inhomogeneous relaxation {B}oltzmann equation.
\newblock {\em Asymptot. Anal.}, 46(3-4):349--359, 2006.

\bibitem[HM05]{HM05}
D.~Hietel and N.~Marheineke.
\newblock Modeling and numerical simulation of fiber dynamics.
\newblock {\em Proc. Appl. Math. Mech.}, 5:667--670, 2005.

\bibitem[HN04]{HeNi04}
F.~H{\'e}rau and F.~Nier.
\newblock Isotropic hypoellipticity and trend to equilibrium for the
  {F}okker-{P}lanck equation with a high-degree potential.
\newblock {\em Arch. Ration. Mech. Anal.}, 171(2):151--218, 2004.

\bibitem[HN05]{HN05}
B.~Helffer and F.~Nier.
\newblock {\em Hypoelliptic estimates and spectral theory for {F}okker-{P}lanck
  operators and {W}itten {L}aplacians}, volume 1862 of {\em Lecture Notes in
  Mathematics}.
\newblock Springer-Verlag, Berlin, 2005.

\bibitem[Hor66]{Hor66}
J.~Horv{\'a}th.
\newblock {\em Topological vector spaces and distributions. {V}ol. {I}}.
\newblock Addison-Wesley Publishing Co., Reading, Mass.-London-Don Mills, Ont.,
  1966.

\bibitem[H{\"o}r67]{Hor67}
L.~H{\"o}rmander.
\newblock Hypoelliptic second order differential equations.
\newblock {\em Acta Math.}, 119:147--171, 1967.

\bibitem[Hsu02]{Hsu02}
E.~P. Hsu.
\newblock {\em Stochastic analysis on manifolds}, volume~38 of {\em Graduate
  Studies in Mathematics}.
\newblock American Mathematical Society, Providence, RI, 2002.

\bibitem[IK74]{IK74}
K.~Ichihara and H.~Kunita.
\newblock A classification of the second order degenerate elliptic operators
  and its probabilistic characterization.
\newblock {\em Z. Wahrscheinlichkeitstheorie und Verw. Gebiete}, 30:235--254,
  1974.

\bibitem[KMW09]{KMW09}
A.~Klar, N.~Marheineke, and R.~Wegener.
\newblock Hierarchy of mathematical models for production processes of
  technical textiles.
\newblock {\em ZAMM Z. Angew. Math. Mech.}, 89(12):941--961, 2009.

\bibitem[KMW12]{KMW12}
A.~Klar, J.~Maringer, and R.~Wegener.
\newblock A {3}-{D} model for stochastic fiber lay-down.
\newblock {\em MMMAS, 22(9)}, 2012.

\bibitem[KSW11]{KSW11}
M.~Kolb, M.~Savov, and A.~W{\"u}bker.
\newblock Geometric ergodicity of a hypoelliptic diffusion modelling the
  melt-spinning process of nonwoven materials.
\newblock arXiv preprint, math.PR, 1112.6159, 2011.

\bibitem[KSW12]{KSW12}
M.~Kolb, M.~Savov, and A.~W{\"u}bker.
\newblock ({N}on-){E}rgodicity of a degenerate diffusion modelling the
  fiber-lay-down process.
\newblock Preprint, 2012.

\bibitem[LLY07]{LLS07}
M.-Y. Lee, T.-P. Liu, and S.-H. Yu.
\newblock Large-time behavior of solutions for the {B}oltzmann equation with
  hard potentials.
\newblock {\em Comm. Math. Phys.}, 269(1):17--37, 2007.

\bibitem[LP61]{LP61}
G.~Lumer and R.~S. Phillips.
\newblock Dissipative operators in a {B}anach space.
\newblock {\em Pacific J. Math.}, 11:679--698, 1961.

\bibitem[MN06]{MN06}
C.~Mouhot and L.~Neumann.
\newblock Quantitative perturbative study of convergence to equilibrium for
  collisional kinetic models in the torus.
\newblock {\em Nonlinearity}, 19(4):969--998, 2006.

\bibitem[Mum94]{Mum90}
D.~Mumford.
\newblock Elastica and computer vision.
\newblock In {\em Algebraic geometry and its applications ({W}est {L}afayette,
  {IN}, 1990)}, pages 491--506. Springer, New York, 1994.

\bibitem[MW06]{MW06}
N.~Marheineke and R.~Wegener.
\newblock Fiber dynamics in turbulent flows: general modeling framework.
\newblock {\em SIAM J. Appl. Math.}, 66(5):1703--1726, 2006.

\bibitem[MW11]{MW11}
N.~Marheineke and R.~Wegener.
\newblock Modeling and application of a stochastic drag for fibers in turbulent
  flows.
\newblock {\em International Journal of Multiphase Flow}, 37(2):136--148, 2011.

\bibitem[OP11]{OP11}
M.~Ottobre and G.~A. Pavliotis.
\newblock Asymptotic analysis for the generalized {L}angevin equation.
\newblock {\em Nonlinearity}, 24(5):1629--1653, 2011.

\bibitem[Paz83]{Paz83}
A.~Pazy.
\newblock {\em Semigroups of linear operators and applications to partial
  differential equations}, volume~44 of {\em Applied Mathematical Sciences}.
\newblock Springer-Verlag, New York, 1983.

\bibitem[Ped89]{Ped89}
Gert~K. Pedersen.
\newblock {\em Analysis now}, volume 118 of {\em Graduate Texts in
  Mathematics}.
\newblock Springer-Verlag, New York, 1989.

\bibitem[R{\"o}c99]{Roe99}
M.~R{\"o}ckner.
\newblock {$L^p$}-analysis of finite and infinite-dimensional diffusion
  operators.
\newblock In {\em Stochastic {PDE}'s and {K}olmogorov equations in infinite
  dimensions ({C}etraro, 1998)}, volume 1715 of {\em Lecture Notes in Math.},
  pages 65--116. Springer, Berlin, 1999.

\bibitem[SG06]{SG06}
R.~M. Strain and Y.~Guo.
\newblock Almost exponential decay near {M}axwellian.
\newblock {\em Comm. Partial Differential Equations}, 31(1-3):417--429, 2006.

\bibitem[Sol95]{Sol95}
M.~R. Soloveitchik.
\newblock {F}okker-{P}lanck equation on a manifold. {E}ffective diffusion and
  spectrum.
\newblock {\em Potential Anal.}, 4(6):571--593, 1995.

\bibitem[Sta99]{St99}
W.~Stannat.
\newblock The theory of generalized {D}irichlet forms and its applications in
  analysis and stochastics.
\newblock {\em Mem. Amer. Math. Soc.}, 142(678):viii+101, 1999.

\bibitem[Tru00]{Tru00}
G.~Trutnau.
\newblock Stochastic calculus of generalized {D}irichlet forms and applications
  to stochastic differential equations in infinite dimensions.
\newblock {\em Osaka J. Math.}, 37(2):315--343, 2000.

\bibitem[Vil06]{Vil06}
C.~Villani.
\newblock Hypocoercive diffusion operators.
\newblock In {\em International {C}ongress of {M}athematicians. {V}ol. {III}},
  pages 473--498. Eur. Math. Soc., Z\"urich, 2006.

\bibitem[Vil09]{Vil09}
C.~Villani.
\newblock Hypocoercivity.
\newblock {\em Mem. Amer. Math. Soc.}, 202(950):iv+141, 2009.

\end{thebibliography}
\end{document}